\let\csname equation*\endcsname\relax
\let\csname endequation*\endcsname\relax
\theoremstyle{definition}
\newtheorem{theorem}{Theorem}[section]
\newtheorem{proposition}[theorem]{Proposition}
\newtheorem{lemma}[theorem]{Lemma}
\newtheorem{corollary}[theorem]{Corollary}
\theoremstyle{definition}
\newtheorem{example}[theorem]{Example}
\theoremstyle{remark}
\newcommand{\R}{\mathbb{R}}
\newcommand{\E}{\mathbb{E}}
\DeclareMathOperator{\trace}{trace}
\DeclareMathOperator{\Dkl}{D_\mathrm{KL}}
\newcommand{\reals}{\mathbb{R}}
\newcommand{\Zhat}{\widehat{\calZ}}
\newcommand{\Hhat}{\widehat{H}}
\newcommand{\That}{\widehat{T}}
\newcommand{\calZ}{\mathcal{Z}}
\newcommand{\ghat}{\hat{g}}
\newcommand{\res}{\mathcal{R}}
\newcommand{\Hell}{\mathrm{D}_{\text{Hell}}}
\newcommand{\pihat}{\widehat{\pi}}
\newcommand{\gbar}{\bar{g}}
\newcommand{\calP}{\mathcal{P}}
\newcommand{\phihat}{\widehat{\phi}}
\newcommand{\apdf}{\psi}
\newcommand{\apdfhat}{\hat{\psi}}
\renewcommand{\d}{\mathrm{\,d}}
\begin{document}

\title[Prior normalization for certified dimension reduction]{Prior normalization for certified likelihood-informed subspace detection of Bayesian inverse problems}

\author{Tiangang Cui$^{1}$, Xin Tong$^{2}$, Olivier Zahm$^{3}$}
\address{$^1$School of Mathematics, Monash University, VIC 3800, Australia}
\address{$^2$National University of Singapore, Department of Mathematic, Singapore}
\address{$^3$Univ. Grenoble Alpes, Inria, CNRS, Grenoble INP, LJK, 38000 Grenoble, France}
\ead{\mailto{tiangang.cui@monash.edu}, \mailto{mattxin@nus.edu.sg}, \mailto{olivier.zahm@inria.fr}}

\begin{abstract}
Markov Chain Monte Carlo (MCMC) methods form one of the algorithmic foundations of Bayesian inverse problems. The recent development of likelihood-informed subspace (LIS) methods offers a viable route to designing efficient MCMC methods for exploring high-dimensional posterior distributions via exploiting the intrinsic low-dimensional structure of the underlying inverse problem. However, existing LIS methods and the associated performance analysis often assume that the prior distribution is Gaussian. This assumption is limited for inverse problems aiming to promote sparsity in the parameter estimation, as heavy-tailed priors, e.g., Laplace distribution or the elastic net commonly used in Bayesian LASSO, are often needed in this case. To overcome this limitation, we consider a prior normalization technique that transforms any non-Gaussian (\emph{e.g.} heavy-tailed) priors into standard Gaussian distributions, which makes it possible to implement LIS methods to accelerate MCMC sampling via such transformations. We also rigorously investigate the integration of such transformations with several MCMC methods for high-dimensional problems. Finally, we demonstrate various aspects of our theoretical claims on two nonlinear inverse problems.
\end{abstract}

\section{Introduction}
Many mathematical modeling problems need to solve an inverse problem that aims to recover an unknown parameter $x$ from indirect and noisy data $y$ via the parameter-to-observable map
\begin{equation}
    \label{eqn:ip}
    y=G(x)+\eta,
\end{equation}
where $x\mapsto G(x)$ is a forward model and $\eta$ is the observation noise. 
In the Bayesian framework, one common way to solve the inverse problem is to draw random variables from the posterior distribution
\begin{equation}
\label{eqn:bayes}
\pi^y(x) = \frac{1}{\calZ} \, f(x;y)\pi^0(x) , \quad \calZ = \int_{\reals^d}f(x;y) \pi^0(x)\d x ,
\end{equation}
where $x\mapsto f(x;y)$ is the \emph{likelihood} function of obtaining the data $y$ for a given parameter $x$ using the parameter-to-observable map \eqref{eqn:ip}, $x\mapsto \pi^0(x)$ is the \emph{prior} density that encodes prior knowledge of the parameter, and $\calZ$ is the \emph{normalizing constant} that is often unknown.

In this work, we consider inverse problems with high-dimensional parameter $x\in\R^d$ where $d\gg1$. Such problems are typically encountered when $x$ arises from the discretization of spatially or temporally heterogeneous objects, for instance, tissue properties in medical imaging. The likelihood function $x\mapsto f(x;y)$ represents the measurement process of observing the data $y$, which involves a forward model $x\mapsto G(x)$ typically derived from differential equations or integral equations. 
There are many ways to set up the prior $\pi^0$. One classical choice is choosing $\pi^0$ as a Gaussian process, which is closely related to the Tikhonov regularization \cite{Stuart10}. There are some recent works trying to use heavy-tailed priors, e.g. Laplace distribution, TV-Gaussian, Student's $t$-distribution and Cauchy distribution \cite{chada2021cauchy,hosseini2017well,markkanen2019cauchy,sullivan2017well,yao2016tv}, to promote sparsity in the solution to the inverse problems.
These heavy-tailed distributions are also natural choices for the modeling of rare events \cite{majda2015intermittency,majda2019simple}. Moreover, some of these distributions, e.g. Student's $t$ and Cauchy, are infinitely-devisable \cite{kallenberg1997foundations}. This property makes them good choices for temporal and spatial models \cite{sullivan2017well,unser2014introduction} and is used in imaging applications for their edge-preserving properties \cite{hosseini2017well}.

Markov Chain Monte Carlo (MCMC) algorithms are popular workhorses to sample from the posterior distribution. However, the performance of MCMC algorithms can be constrained by several features of  large-scale inverse problems. Firstly, many MCMC algorithms can have degenerating efficiency with increasing dimensionality of the parameter $x$. See \cite{roberts2001optimal} and references therein for detailed discussions. Secondly, the landscape of the posterior $\pi^y$, which can be largely controlled by the landscape of the prior $\pi^0$, also affects the convergence of MCMC algorithms. In particular, when the invariant distribution is strongly log-concave outside of a compact set or has sub-Gaussian tails, MCMC algorithms can have rapid convergence to the invariant distribution \cite{chen2018robust,ma2019sampling}. However, such assumptions are invalid for problems with heavy-tailed priors. Together with  high-dimensionality, it can be particularly challenging to design efficient MCMC algorithms for solving inverse problems equipped with heavy-tailed priors. 

In this work, we present a combined treatment to address these challenges via the likelihood-informed dimension reduction \cite{cui2014likelihood,cui2021unified,zahm2018certified}. The fundamental idea is to identify the most likelihood-informed directions in the high-dimensional parameter space, along which the prior and the posterior differ the most. The resulting likelihood-informed subspace (LIS) provides effective reduced-dimensional approximations to the original high-dimensional posterior \cite{EtAl1Cui6,cui2014likelihood}. Furthermore, for problems equipped with Gaussian priors, the parameter space decomposition offered by such approximations also opens the door to accelerating standard MCMC algorithms targeting the original posterior.
Assuming the prior to be Gaussian or log-concave distributions, rigorous error bounds are derived in \cite{cui2021unified,zahm2018certified} to certify that the dimension reduction leads to accurate approximations. Such assumptions, however, cannot be satisfied for heavy-tailed priors. In this paper, we aim to derive a similar certified dimension reduction technique for inverse problems equipped with heavy-tailed priors.

The path we take is a prior normalization technique that transforms a challenging prior distribution into a Gaussian distribution. In particular, we consider a bijective map $T:z\mapsto T(z),$ such that the random vector $X=T(Z)$ follows the prior density $\pi^0$ whenever $Z$ is a standard Gaussian vector $Z$ with density $\phi^0(z) \propto \exp(-\tfrac12 \|z\|^2)$. In other words,  the prior density is the pushforward image of a standard Gaussian under the map $T$, i.e., $\pi^0=T_\sharp\phi^0$. Because $T$ is a bijection, we can pullback the posterior density to obtain a density in the \emph{reference coordinate} $z$ equipped with a standard Gaussian prior. The pullback posterior density, $\phi^y= T^\sharp\pi^y$, can be expressed as 
\[
\phi^y(z)= \frac{1}{\calZ} \, g(z;y) \phi^0(z),
\qquad g(z;y)=f(T(z);y) ,
\]
where $\calZ$ is the same normalizing density as in \eqref{eqn:bayes}.
Naturally, we can implement MCMC algorithms with $\phi^y$ as the invariant distribution to obtain posterior samples in the reference coordinate, and then transform them via $T$ to obtain posterior samples in the original coordinate. 

Since the prior in the reference coordinate $\phi^0(z)$ is Gaussian, the above prior normalization may make the tails of the transformed posterior $\phi^y(z)$ easier to explore with standard MCMC methods. For instance, if the original likelihood $x\mapsto f(x;y)$ is constant outside a compact set, then the transformed posterior $\phi^y$ is strongly-log-concave outside a compact domain, and so it meets the fast MCMC convergence criteria described by \cite{ma2019sampling}.

The prior normalization has been considered in various areas of statistics, see \cite{lemaire2013structural,nataf1962determination,lebrun2009innovating}. For Bayesian inverse problems, although linear transformation is widely used as a preconditioner to accelerate inference algorithms, there are only a few existing works that exploit prior normalization to accelerate MCMC for problems with heavy-tailed priors. In particular,  \cite{fleischer2007transformations} discusses how to use transformation to sample a one-dimensional distribution which is  multimodal, and \cite{chen2018robust,wang2017bayesian} discuss how to implement the preconditioned Crank-Nicholson (pCN) algorithm and the random-then-optimize algorithm with prior normalization. To the best of our knowledge, there is a gap between these algorithmic developments and the rigorous analysis of the impact of prior normalization on high-dimensional MCMC algorithms. In addition, there is also a lack of understanding on how to mitigate errors caused by using an approximate transformation $\That$ in the situation where $T$ is not accessible nor tractable to compute.

This article contributes to the above issues by systematically investigating the usage of prior normalization for high-dimensional MCMC algorithms.
First in Section \ref{sec:Tconstruct}, we show how to construct the transformation maps $T(z)$ and we analyze its asymptotic behaviour when $z\rightarrow\infty$ for various heavy-tails prior.
Then in Section \ref{sec:MCMC}, we show how to identify the intrinsic low-dimensional structure of the high-dimensional  transformed posterior $\phi^y(z)$.
This naturally leads to the design of scalable sampling methods that allocate computation resources to the most effective dimensions, while maintaining the original posterior as the invariant distribution. 
Section \ref{sec:theory} provides rigorous bounds for the approximation errors and computational inefficiency caused by dimensional reduction and by the usage of approximate transformation. Section \ref{sec:numerics} provides two numerical examples demonstrating the efficiency of the proposed methods.

\section{Prior normalization}
\label{sec:Tconstruct}

\subsection{Product form priors}
\label{sec:diagonalprior}
In this work, we consider prior densities that can be effectively expressed in a product-form of
$$\pi^0(x)=\prod_{i=1}^d \pi^0_i(x_i),$$
where $\pi^0_i$ denotes the $i$-th marginal density. 
Such product-form priors naturally appear when a random fields $f$ is defined via an expansion $f=\sum_{i\geq1} X_i f_i$ on a deterministic function basis $(f_1,f_2,\hdots)$ and where $X_i\sim \pi^0_i$ are independent random variables, see for instance the Besov random fields \cite{dashti2012besov,saksman2009discretization}.
Denoting the cumulative distribution function (CDF) of $\pi^0_i$ by $\calP_i^0(x)=\int_{-\infty}^x \pi_i^0(t)\d t$ and the CDF of the univariate standard Gaussian density $\phi^0(z_i)$ by $\Phi^0(z)=\int_{-\infty}^z \phi^0(t)\d t$, the diagonal transformation 
\begin{equation}
    \label{eqn:prod}
    T(z) = \begin{pmatrix} T_1(z_1) \\\vdots \\ T_d(z_d) \end{pmatrix}
 ,\quad T_i(z_i) = (\calP_{i}^0)^{-1}\circ \Phi^0(z_i) ,
\end{equation} 
pushes forward the reference standard Gaussian density $\phi^0$ to the prior $\pi^0$.
Here, $(\calP_{i}^0)^{-1}$ is the inverse of $\calP_i^0$. 
It is well known that $T_i$ corresponds to the \emph{optimal transport} from the reference density $\phi^0$ to the prior density $\pi_i^0$, see \cite[Theorem 2.18]{villani2009optimal}. Assuming that $\pi^0(x)>0$ for all $x\in\R^d$, the map $T$ is everywhere differentiable and
$$
T_i'(z_i) = \frac{\phi^0(z_i)}{\pi_i^0(T_i(z_i))} > 0.
$$
For many classical distributions, e.g. Laplace and Student's $t$, the CDF $\calP_i^0$ is known analytically so that the transformation $T$ and its inverse can be evaluated up to machine precision.
Otherwise, a numerical approximation of the CDF permits to approximately evaluate $T$ and $T^{-1}$. 
In some specific cases, it can be more convenient to work with an approximate transformation built analytically rather than numerically.

As a starting point, we discuss several examples where the transformations have closed form expressions. Since our transformation is diagonal, our discussion will focus on univariate distributions. The technical derivations of the results are given in the appendix.

\begin{example}[Laplace distribution]\label{ex:Laplace}
The Laplace distribution is often used as a prior because it can be interpreted as $\ell_1$-regularization.
The density of a univariate Laplace distribution is given by $\pi^0(x) = \frac12 \lambda e^{-\lambda |x|}$ for some parameter $\lambda>0$. 
The CDF $\calP^0(x)$ can be analytically computed, which permits us to obtain the formula
$$
 T(z)= -\frac{\text{sign}(z) }{\lambda} \log \big(2\Phi^0( -|z|) \big) ,
\qquad\text{and}\qquad 
 T'(z)=  \frac{\phi^0( -|z|)}{\lambda \Phi^0( -|z|)} ,
$$
for all $z\in \R$, where $\text{sign}(\cdot)$ denotes the sign function with the convention $\text{sign}(0)=0$.
We show in Appendix \ref{sec:Proof_Besov} that the asymptotic behavior of $T$ when  $z\to\pm\infty$ is given by
$$
 T(z) \sim \frac{\text{sign}(z)|z|^2}{2\lambda},
\qquad\text{and}\qquad 
 T'(z) \sim \frac{|z|}{\lambda} .
$$
\end{example}

\begin{example}[Exponential power distribution]\label{ex:Exponential}
The previous example can be generalized to the exponential power distribution with density $\pi^0(x)= \calZ_{p,\lambda}^{-1} e^{-\lambda |x|^p}$ with parameters $p,\lambda>0$ and where $\calZ_{p,\lambda} = \int_{\reals^d}e^{-\lambda |x|^p} \d x$. 
The case $p\geq1$ is typically encountered in the Besov space prior \cite{dashti2012besov,saksman2009discretization}.
We emphasize here that $0<p<1$ is a way to further enforce sparsity, see \cite{hosseini2017well}. Lemma \ref{lem:Besov} shows that the asymptotic behavior of $T(z)$ and $T'(z)$ when $z\rightarrow\pm\infty$ is 
$$
 T(z) \sim \text{sign}(z)\left( \frac{ |z|^{2}}{2\lambda} \right)^{1/p},
\qquad\text{and}\qquad 
 T'(z) \sim \frac{|z|} { \lambda p} \left( \frac{|z|^2}{2 \lambda} \right)^{1/p-1} ,
$$
for any $p>0$.
In particular, small values for $p\ll1$ yield high-order polynomial tails for $T$.
\end{example}

\begin{example}[Cauchy distribution]\label{ex:Cauchy}
 The Cauchy distribution has density $\pi^0(x)= \frac{\lambda}{\pi((\lambda x)^2+1)}$, where $\lambda>0$ is a scale parameter.
 The tails of the Cauchy distribution are so heavy that all moments are undefined, meaning $\int_{\reals^d}|x|^n\d\pi^0=\infty$ for all $n\geq1$.
 Its CDF admits a closed-form expression $\calP^0(x) = \frac{\arctan(\lambda x)}{\pi} +\frac{1}{2}$, which leads to the transformation
 $$
  T(z) = \gamma\tan(\pi\Phi^0(z)-\pi/2).
 $$
 Thus, the asymptotic behavior of $T(z)$ and $T'(z)$ when $z\rightarrow\pm\infty$ is given by
 $$
  T(z) 
  \sim \frac{\gamma z e^{z^2/2}}{\sqrt{\pi/2}} 
  \qquad\text{and}\qquad 
    T'(z) 
    \sim   \frac{ \gamma z^2  e^{z^2/2}}{\sqrt{\pi/2} } .
 $$
\end{example}

\begin{example}[Power-law distributions]\label{ex:Power-law}

Pareto distributions are power-law probability distributions that are often used to model heavy tail phenomenon. The density of the zero symmetric Pareto distribution is defined on $\R$ by $\pi^0(x) = \frac{\alpha}{2}(1+|x|)^{-(\alpha+1)}$ for some parameter $\alpha>0$, and its CDF is $\calP^0(x) = 1-\frac12(1+|x|)^{-\alpha}$ for $x\geq0$ and $\calP^0(x) = \frac12(1+|x|)^{-\alpha}$ for $x\leq0$.
Thus we obtain the formula
$$
 T(z) = - \text{sign}(z) \left( 1-\big(2\Phi^0(-|z|)\big)^{-1/\alpha} \right),
\qquad\text{and}\qquad 
 T'(z) = \frac{2\phi^0(z)}{\alpha (2\Phi^0(-|z|))^{\frac{\alpha+1}{\alpha}}},
$$
for all $z\in\R$.
As shown in Appendix \ref{sec:Proof_Besov}, we have $\Phi^0(-|z|)\sim \tfrac{e^{-z^2/2}}{|z|\sqrt{2\pi}}$ for $z\rightarrow \pm\infty$ so we deduce the following asymptotic behavior
\begin{equation}\label{eq:asymptoticT_Pareto}
T(z)\sim \text{sign}(z) \left(2|z|\sqrt{2\pi}\right)^{\frac{1}{\alpha}}e^{z^2/(2\alpha) },
\qquad\text{and}\qquad 
T'(z)\sim 
\frac{(\pi/2)^{1/(2\alpha)}}{\alpha}
|z|^{1+\frac{1}{\alpha}}e^{z^2/(2\alpha)}. 
\end{equation}
Notice that with $\alpha=1$ we obtain the same asymptotic behaviour of the Cauchy distribution.
With $\alpha\ll1$, the tails of $T$ are even heavier.
In the same way, the Student's t-distribution with density $\pi^0(x)\propto (1+x^2/\alpha)^{-\frac{\alpha+1}{2}}$ is associated with a transformation $T$ which has similar asymptotic behaviour as in \eqref{eq:asymptoticT_Pareto}.
\end{example}

\begin{example}[Horseshoe]\label{ex:Horseshoe}
Some heavy-tailed distributions can be defined through a hierarchical model. For example, the Horseshoe distribution \cite{bhadra2019lasso,carvalho2009handling} considers a latent variable $\gamma$ that follows a half Cauchy distribution on $\R_{\geq0}$ so that, conditioned on $\gamma$, we let $x|\gamma \sim \mathcal{N}(0, (\gamma \tau)^2 )$ follow a Gaussian distribution with standard deviation $\gamma \tau$, where $\tau > 0$ is a global shrinkage parameter. The density function and CDF of the Horseshoe prior are given by 
\[
\pi^0(x) = \int_0^{\infty} \frac{1}{\tau\gamma} \phi^0\left(\frac{x}{\tau\gamma}\right)\frac{2}{\pi(1+\gamma^2)}\d\gamma \quad \text{and} \quad 
\calP^0(x) = \int_0^{\infty} \Phi^0\left(\frac{x}{\tau\gamma}\right)\frac{2}{\pi(1+\gamma^2)}\d\gamma  ,
\]
respectively. Since there is not closed-form expression for these univariate integrals, either numerical quadrature or hierarchical sampling methods need to be used to handle the Horseshoe prior in practice.
As shown in \cite[Eq.(3)]{bhadra2019lasso}, the bound $\tfrac{1}{\tau(2\pi)^{3/2}}\log(1+\tfrac{4\tau^2}{x^2})\leq \pi^0(x)\leq \tfrac{2}{\tau(2\pi)^{3/2}}\log(1+\tfrac{2\tau^2}{x^2})$ holds and yields $\pi^0(x)\sim C (1+|x|)^{-2}$ with $C=\tfrac{4\tau}{(2\pi)^{3/2}}$. Thus, the tails of the Horseshoe density are the same as the one of the power-law density with $\alpha=1$ and the tails of $T(z)$ in $z\rightarrow\pm\infty$ are 
\[
T(z)\sim \text{sign}(z) \left(2|z|\sqrt{2\pi}\right)e^{z^2/2 },
\qquad\text{and}\qquad 
T'(z)\sim 
(\pi/2)^{1/2}
|z|^{2}e^{z^2/2}. 
\]
\end{example}

\subsection{General priors}

When the prior $\pi^0$ is not of a product-form, the transformation $T$ cannot be diagonal. Nonetheless, there exist many ways to  define a (nondiagonal) transformation $T$ such that $T_\sharp\phi^0 = \pi^0$.
A constructive way to build such a $T$ is the Rosenblatt transformation \cite{bogachev2005triangular,baptista2020adaptive,cui2021conditional}. 
It builds on the factorization $\pi^0(x)=\pi^0(x_1)\pi^0(x_2|x_1),\hdots \pi^0(x_d|x_1,\hdots,x_{d-1})$  to construct the $k$-th map component $T_k$ that pushes forward the one-dimensional reference $\phi^0_k(z_k)$ to the conditional marginal density $\pi^0(x_k|x_1,\ldots,x_{k-1})$. The resulting transformation is lower triangular, meaning that the $k$-th component of the map only depends on the first $k$ components of the variable $z$, i.e., $T_k(z) = T_k(z_1,\hdots,z_k)$.
The Rosenblatt transformation can take advantage of the Markov structure that the prior may have to enforce sparsity in the map $T$, which enables fast evaluation of the transformation, see \cite{baptista2021learning,spantini2018inference} for further details.

Optimal transport \cite{villani2009optimal} is another way to define $T$ such that $T_\sharp\phi^0 = \pi^0$. The basic idea is to let $T$ be the transformation which minimizes some \emph{transportation cost} under the constraint $T_\sharp\phi^0 = \pi^0$. 
Optimal transports are not lower triangular in general.
A good survey of the related computational tools can be found in \cite{peyre2019computational}. 
In practice, optimal transports are numerically difficult to construct, especially in high dimensions.

In some situations, we may only have approximate transformations that map the reference Gaussian density to some approximations of the prior densities. For example, the Nataf transformation \cite{lebrun2009innovating,nataf1962determination} defines a map $\That(z)=T_\text{diag}(R z)$, where $T_\text{diag}$ is of a diagonal transform and $R\in\R^{d\times d}$ is a matrix such that $\That_\sharp\phi^0$ has the same marginals and the same covariance as $\pi^0(x)$. 
Examples also include normalizing flows \cite{baptista2020adaptive,rezende2015variational} and generative adversarial networks \cite{goodfellow2014generative}, where the prior density $\pi^0(x)$ needs to be estimated from the samples. 
This underlines the necessity of analyzing the stability of the proposed method when the transformation $T$ approximately pushes forward $\phi^0 $ to $ \pi^0$.

In the subsequent development, we assume that the prior admits a Lebesgue density which is fully supported. This ensures the existence of a map $T$ which is differentiable \cite{bogachev2005triangular}. While these are  reasonable assumptions for most applications, there could be other challenging distributions where $T$ does not exist or is not smooth. This may happen if the support of $\pi^0$ is disjoint or if it does not have a Lebesgue density.

\section{Accelerated MCMC in the reference coordinates}
\label{sec:MCMC} 
Next, we discuss how to combine prior normalization and MCMC algorithms to solve high-dimensional Bayesian inverse problems with heavy-tailed priors. We will present the construction of LIS using prior normalization, followed by LIS-accelerated MCMC sampling. The analysis of the resulting algorithms is provided later in Section \ref{sec:theory}.

\subsection{LIS using prior normalization}

The efficiency of an MCMC algorithm critically depends on the ability of a proposal density to explore the invariant distribution. The high-dimensional parameters of inverse problems make it difficult to design proposals that can tightly follow the geometry of the posterior. We aim to exploit the intrinsic low-dimensional structure of inverse problems to mitigate this challenge. 

The key intuition is that because of the smoothness of the forward model, the incomplete nature of the observations and the noise in the measurement process, the observed data may only inform a subspace of the high-dimensional parameter space. Suppose such a subspace is given by the image of a matrix $U_r\in\R^{d\times r}$ with orthonormal columns, i.e., $U_r^TU_r=I_r$. Let the complement of $\mathrm{Im}(U_r)$ be the image of another matrix $U_\bot\in\R^{d\times (d-r)}$ with orthonormal columns such that $U_r^TU_\bot = 0$. Then, we can decompose the high-dimensional parameter $x$ as
\begin{equation}\label{eq:decomp_x_reference_space}
 x=U_r x_r+ U_\bot x_\bot,
 \quad  \text{where}\quad \left\{
 \begin{array}{l}
  x_r = U_r^T x \\
  x_\bot = U_\bot^T x \\
 \end{array}\right. .
\end{equation}
We denote the marginal posterior and the conditional posterior by $\pi^y(x_r)$ and $\pi^y(x_\perp|x_r)$, respectively. Since the data is only informative to the $r$-dimensional parameter $x_r$, the conditional posterior $\pi^y(x_\perp|x_r)$ can be approximated by the conditional prior $\pi^0(x_\perp|x_r)$. Thus, we can approximate the full posterior $\pi^y(x) = \pi^y(x_r)  \pi^y(x_\perp|x_r)$ by
\begin{equation}\label{eq:ApproximatePosterior}
 \widetilde\pi^y(x) = \pi^y(x_r)  \pi^0(x_\perp|x_r).
\end{equation}
In other words, the likelihood function is effectively supported on $\mathrm{Im}(U_r)$, and thus $\mathrm{Im}(U_r)$ is also referred to as the \emph{Likelihood Informed Subspace}.

The structure suggested in \eqref{eq:ApproximatePosterior} provides a guideline to accelerate MCMC sampling that is analogous to the Rao-Blackwellization principle \cite{robert2021rao}. One should implement state-of-the-art MCMC algorithms, e.g., those inspired by Hamiltonian and Langevin dynamics \cite{bui2014solving,girolami2011riemann,hoffman2014no,martin2012stochastic,neal2011mcmc,petra2014computational,roberts1996exponential}, to target only the marginal posterior $\pi^y(x_r)$, while using the conditional prior $\pi^0(x_\perp|x_r)$ to explore the complement of the LIS. This strategy has been previously investigated in \cite{Cuietal16b,cui2014likelihood} for exploring problems with Gaussian priors. For problems with heavy-tailed prior distributions, there are two major obstacles in implementing this strategy. Firstly, for non-Gaussian prior distributions, the conditional prior $\pi^0(x_\perp|x_r)$ may not be analytically tractable for arbitrary basis $U_r$. Secondly, as shown in \cite{zahm2018certified,cui2020data,cui2021unified}, constructing a suitable $U_r$ with theoretical guarantees generally requires $\pi^0$ to be log-concave, which is not the case for some heavy-tailed priors.

The prior normalization technique allows us to tackle the above-mentioned obstacles. Given the transformation $T$ such that $T^\sharp\pi^0 (z)=\phi^0(z)$, the pullback posterior density $\phi^y(z) = T^\sharp \pi^y (z)$ can be written as
\begin{equation}
    \phi^y(z)  = \frac{1}{\mathcal{Z}}g(z;y)\phi^0(z), \qquad g(z;y)=f(T(z);y), \label{eq:normal_post}
\end{equation}
and be interpreted as a posterior density equipped with a log-concave Gaussian prior. Thus, after prior normalization, the goal of LIS-based posterior approximation becomes finding $U_r$ such that the reference parameter $z$ yields a decomposition
\begin{equation}\label{eq:decomp_z}
 z=U_r z_r+ U_\bot z_\bot,
 \quad  \text{where }\quad \left\{
 \begin{array}{l}
  z_r = U_r^T z \\
  z_\bot = U_\bot^T z \\
 \end{array}\right. ,
\end{equation}
that can accurately approximate the posterior distribution $\phi^y(z)$ in the reference coordinate by $\widetilde\phi^y(z) = \phi^y(z_r)  \phi^0(z_\bot|z_r)$.
Because the prior $\phi^0$ is a standard Gaussian, the conditional prior $\phi^0(z_\bot|z_r)$ is equivalent to the marginal prior $\phi^0(z_\bot)$ for any orthogonal basis $U_r$. Therefore, the marginal posterior  $\phi^y(z_r)=\int_{\reals^{d-r}}\phi^y( U_rz_r + U_\bot^T z_\bot)  \d z_\bot $ can be written as
\begin{equation}\label{eq:ApproximatePosterior_z}
 \phi^y(z_r) =\frac{1}{\calZ}  \gbar( z_r ; y)  \phi^0( z_r ),
 \quad\text{where}\quad 
 \gbar( z_r ; y) = \int_{\reals^{d-r}}g(U_r z_r + U_\bot z_\bot;y)\phi^0(z_\bot)  \d z_\bot.
\end{equation}
As explained later in Section \ref{sec:theory}, one way to construct such a basis $U_r$ is to use the leading eigenvectors (\emph{i.e.} the ones associated with the largest eigenvalues) of the matrix
$$
 H = \int_{\reals^d}\nabla_z \log g(z;y)\nabla_z \log g(z;y)^T \d \phi^y(z).
$$
Note that the matrix $U_\perp$ is introduced for defining the complement of $\mathrm{Im}(U_r)$ and, in practice, we do not need to assemble it explicitly. For instance, the projection onto $\mathrm{Im}(U_\bot)$ can be obtained using the orthogonal projector $I - U_r U_r^T$. In Section \ref{sec:41}, we analyze the accuracy of the approximate posterior induced by $U_r$ using the above matrix $H$. In the rest of this section, we will focus on how to accelerate MCMC sampling supposing the basis $U_r$ is given. 

Given a LIS basis $U_r$, pushing forward $\widetilde\phi^y(z)$ through the transformation $T$, the resulting approximate posterior density in the original coordinate can be expressed as
\begin{equation}
    \label{eq:ApproximatePosterior_Tz} 
    T^\sharp \widetilde \phi^y(x) = \gbar\big( U^T_r T^{-1}(x) ; y\big) \pi^0(x) .
\end{equation}
Although $T^\sharp \widetilde \phi^y$ does not follow the form of the reduced-dimensional posterior $\widetilde \pi^y$ in \eqref{eq:ApproximatePosterior}, it can be interpreted as a nonlinear reduced-dimensional approximation to the likelihood, see \cite{bigoni2021nonlinear}. Indeed, the approximate likelihood function $x\mapsto \gbar( U^T_r T^{-1}(x) ; y )$ is constant on the $d-r$ dimensional manifolds 
$$
\mathcal{M}_{x_0}=\left\{T( U_rU_r^T T^{-1}(x_0) + U_\bot z_\bot ) , z_\bot\in\R^{d-r}\right\} ,
$$ 
for any $x_0\in\R^d$.
In other words, instead of the decomposition \eqref{eq:decomp_x_reference_space}, the dimension reduction in the original space yields a decomposition
$$
 x = T \big( U_r z_r + U_\bot z_\bot  \big),
 \quad \text{where }\quad \left\{
 \begin{array}{l}
  z_r = U_r^T T^{-1}(x) \\
  z_\bot = U_\bot^T T^{-1}(x) \\
 \end{array}\right. ,
$$
and $z_r$ contains the informed coordinates and $z_\bot$ represents the non-informed ones. 

\subsection{Exact inference using pseudo-marginal}

The approximate posterior \eqref{eq:ApproximatePosterior_Tz} can be explored using a decomposed strategy. One can first $m$ draw samples from the low-dimension marginal posterior $z_r^i\sim \phi^y(z_r), i=1,\ldots,m,$ using an MCMC algorithm, and then draw samples from the standard normal $z_\bot^i\sim \phi^0(z_\bot)$ to obtain approximate posterior samples $x_i = T ( U_r z_r^i + U_\bot z_\bot^i  )$. However, the critical issue in implementing this strategy is that we need to evaluate the marginal likelihood function $\gbar(z_r;y)=\int_{\reals^{d-r}}g(U_r z_r + U_\bot z_\bot;y)\phi^0(z_\bot)  \d z_\bot $, which involves an high-dimensional integral with no analytical solution available in general.
Fortunately, we can employ the pseudo-marginal method \cite{andrieu2009pseudo} to overcome this limitation by defining auxiliary MCMC transition kernels that tightly follow the structure of the approximate posterior. This naturally extends standard MCMC algorithms that are efficient for low or moderate dimensional problems to simultaneously explore the marginal posterior and the original full posterior.

Considering the posterior $\phi^y(z_r, z_\bot)$ after the prior normalization, to construct MCMC transition kernels following the principle of pseudo-marginal, we first extend the complementary subspace to define the auxiliary posterior density
\begin{equation}
\phi^{y,m}\left(z_r, \{z_\bot^i\}_{i = 1}^m\right) = \frac1{\mathcal{Z}} \phi^0(z_r)  \bigg(\prod_{i=1}^m \phi^0(z_\bot^i) \bigg) \bigg(\frac1m \sum_{i=1}^m g(z_r, z_\bot^i;y)\bigg). \label{eq:aux1}
\end{equation}
Since the conditional posterior $\phi^y(z_\bot|z_r)$ can be expressed as 
\begin{equation}
 \label{tmp:264589}
\phi^y(z_\bot|z_r) = \frac{\phi^y(z_r,z_\bot)}{\phi^y(z_r)} = \frac{\phi^0(z_\bot) g(z_r, z_\bot;y)}{\gbar(z_r; y)},
\end{equation}
the auxiliary posterior density can also be written as
\begin{align}
\phi^{y,m}\left(z_r, \{z_\bot^i\}_{i = 1}^m\right) & = \frac1{\mathcal{Z}} \phi^0(z_r) \gbar(z_r; y) \bigg(\frac1m \sum_{i=1}^m \frac{g(z_r, z_\bot^i;y)}{\gbar(z_r; y)} \prod_{j=1}^m \phi^0(z_\bot^j) \bigg) \nonumber \\
& = \phi^y(z_r) \bigg(\frac1m \sum_{i=1}^m \phi^y(z_\bot^i|z_r) \prod_{j\neq i} \phi^0(z_\bot^j) \bigg), \label{eq:aux2}
\end{align}
where we applied \eqref{tmp:264589} for $\phi^y(z^i_\bot|z_r)$.
Using the above identity, marginalizing $\phi^{y,m}(z_r, \{z_\bot^i\}_{i = 1}^m)$ over all the complementary coordinates $z_\bot^1,\ldots, z_\bot^m$, we obtain the marginal posterior $\phi^{y}(z_r)$. Therefore, constructing a Markov chain transition kernel that is invariant to the auxiliary density $\phi^{y,m}(z_r, \{z_\bot^i\}_{i = 1}^m)$ also leads to marginal Markov chains that can sample the marginal posterior $\phi^{y}(z_r)$.

The key property we exploit to design efficient MCMC algorithms is that the conditional posterior $\phi^y(z_\bot|z_r)$ can be approximated by the marginal prior on the complement of the LIS, i.e., $\phi^y(z_\bot|z_r) \approx \phi^0(z_\bot)$. Thus, the term in the brackets of \eqref{eq:aux2} can be also approximated by the product of marginal priors, i.e.,
\[
    \frac1m \sum_{i=1}^m \phi^y(z_\bot^i|z_r) \prod_{j\neq i} \phi^0(z_\bot^j) \approx  \prod_{j = 1}^m \phi^0(z_\bot^j).
\]
This way, we can define decomposed MCMC proposal densities in the form of 
\begin{equation}
    p\left( z_r^\prime, \{z_\bot^{\prime i}\}_{i = 1}^m | z_r, \{z_\bot^i\}_{i = 1}^m \right) = p( z_r^\prime | z_r ) \prod_{i =1}^m \phi^0(z_\bot^i),\label{eq:aux_prop}
\end{equation}
where we want the low-dimensional LIS proposal $ p( z_r^\prime | z_r )$ to follow the structure of the marginal posterior, while using complementary prior to explore the rest. Here we aim to implement the LIS proposal using well-established MCMC algorithms. For example, the automatically tuned no-u-turn sampler (NUTS) of \cite{hoffman2014no}, adaptive Metropolis adjusted Langevin algorithm (MALA) of \cite{atchade2006adaptive}, the pCN proposal of \cite{beskos2008mcmc,cotter2013mcmc}, transport-map samplers \cite{parno2018transport}, etc. 

To sample the invariant density $\phi^{y,m}(z_r, \{z_\bot^i\}_{i = 1}^m)$, we need to accept proposed samples of the auxiliary proposal \eqref{eq:aux_prop} with probability 
\begin{align}
    \alpha_1\left(  z_r, \{z_\bot^i\}_{i = 1}^m; z_r^\prime, \{z_\bot^{\prime i}\}_{i = 1}^m \right) & = 1 \wedge \frac{\phi^{y,m}(z_r^\prime, \{z_\bot^{\prime i}\}_{i = 1}^m)}{\phi^{y,m}(z_r, \{z_\bot^i\}_{i = 1}^m)} \frac{p( z_r, \{z_\bot^i\}_{i = 1}^m | z_r^\prime, \{z_\bot^{\prime i}\}_{i = 1}^m )}{p( z_r^\prime, \{z_\bot^{\prime i}\}_{i = 1}^m | z_r, \{z_\bot^i\}_{i = 1}^m )} \nonumber \\
    & = 1 \wedge \frac{\phi^0(z_r^\prime) \sum_{i=1}^m g(z_r^\prime, z_\bot^{\prime i};y)}{\phi^0(z_r) \sum_{i=1}^m g(z_r, z_\bot^i;y)} \frac{p( z_r | z_r^\prime )}{p( z_r^\prime | z_r )}
\end{align}
where the second equation above follows from \eqref{eq:aux1} and \eqref{eq:aux_prop}, and we use $a\wedge b$ to denote  $\min\{a,b\}$. Given the above acceptance probability, the MCMC transition kernel originally defined for the auxiliary posterior density \eqref{eq:aux1} can  be interpreted as a lower-dimensional MCMC transition kernel that makes acceptance/rejection on the Monte Carlo average  
\[
 \phi^0(z_r) \bigg( \frac1m \sum_{i=1}^m g(z_r, z_\bot^i;y)\bigg) , \quad  z_\bot^i \sim \phi^0(z_\bot),
\]
which is an unbiased estimator of the unnormalized marginal posterior density. This way, one natural question to ask is how the LIS basis $U_r$ impact the efficiency of the pseudo-marginal method. In Section \ref{sec:pseudoanalysis}, we address this question by combining the error estimates of the approximate posterior defined by $U_r$ and Corollary 4 of \cite{andrieu2015convergence}. 

The above pseudo-marginal method can also generate samples from the full posterior while sampling the marginal posterior.  For a given $z_r$ the evaluated likelihood functions yield a set of weighted complementary prior samples $\{z_\bot^i, w_\bot^i\}_{i = 1}^m$, where $w_\bot^i = g(z_r, z_\bot^i;y)$, that can be viewed as weighted samples of the conditional posterior $\phi^y(z_\bot|z_r)$. This way, for a state of the auxiliary Markov chain, $(z_r, \{z_\bot^i\}_{i = 1}^m)$, we can randomly select a complementary sample $z_\bot^\ast$ from the set $\{z_\bot^i\}_{i =1}^m$ according to the categorical distribution defined by the unnormalized weights $\{w_\bot^i\}_{i =1}^m$ and assemble a full posterior sample by $z = U_r z_r + U_\bot z_\bot^\ast$. 
Using this strategy, we can simultaneously sample the marginal posterior and the full posterior. One step of the resulting MCMC algorithm is given in Algorithm \ref{alg:pMCMC}.

\begin{algorithm}[h]  
 \SetAlgoLined
 \SetKwFunction{algo}{pseudo-marginal}
 \SetKwFunction{proc}{propose}
 \SetKwFunction{avrg}{average}
\SetKwInOut{Required}{Required}
\Required{transformation $T$, matrix $U_r$, likelihood $f(x;y)$, LIS proposal $p(\cdot|\cdot)$ on $\R^r$, and a pseudo-marginal sample size $m$.}
\KwIn{current state $x$ and the associated Monte Carlo average $R$.}
\KwOut{new state $x'$ and the the new Monte average $R'$.}

\SetKwProg{myalg}{Algorithm}{}{}
\SetKwProg{myproc}{Procedure}{}{}

\BlankLine
\myalg{\algo{$x$, $R$, $m$}}{
        $(x^\prime, R^\prime, \alpha_1) \leftarrow$ \texttt{propose}$(x, R, T, U_r, f, p, m)$\;        
        \If(reject the proposal candidate){$\mathrm{uniform}[0,1] > \alpha_1$}{Set $x' = x$ and $R' = R$\;}
        \KwRet $x^\prime$ and $R^\prime$\;
}

\BlankLine
\myproc{\proc{$x, R, T, U_r, f, p, m$}}{
     Evaluate the reference parameter $z=T^{-1}(x)$ and compute $z_r = U_r^T z$\;
     Generate a LIS proposal candidate $z'_r\sim p(\,\cdot\,|z_r)$\;
    \For{$i = 1, \ldots, m$}{
         Generate reference prior samples $z^i \sim \mathcal{N}(0, I)$\; 
         Project $z^i$ to the complement of LIS $z^i = z^i - U_r (U_r^T z^i)$\; 
         Set $x^i = T(U_r z'_r + z^i)$ and compute the likelihood weights $w^i_\bot = f(x^i ;y)$\;
    }
     Compute the Monte Carlo average
    \(
         R' = \phi^0(z_r) \big(  \frac{1}{m}\sum_{i=1}^m w^i_\bot \big)
    \)\;
     Compute the acceptance probability
        \(
           \alpha_1 = 1 \wedge ( R' \, p(z_r|z'_r)) \big/ (R \, p( z'_r|z_r))
        \)\;
     Draw a sample $x^\prime$ from $\{x^{i}\}_{i=1}^m$ according to the weights  $\{w^{i}_\bot\}_{i=1}^m$\;
     \KwRet $x^\prime$, $R^\prime$ and $\alpha_1$\;
}
\caption{One step of LIS-pseudo-marginal MCMC with prior normalization}
\label{alg:pMCMC}
\end{algorithm}

\subsection{Delayed acceptance for approximate prior normalization}
As discussed in Section \ref{sec:Tconstruct}, we may have access only to an approximate transformation $\That$ rather than the exact $T$ for the prior normalization. In this situation, we can still apply Algorithm \ref{alg:pMCMC} to generate samples from an approximate posterior using the approximate transformation. Then, assuming that we can evaluate the original prior density $\pi^0(x)$, we can remove the approximation error by applying the delayed acceptance method \cite{christen2005markov,liu1998sequential}.

As a starting point, we use the approximate transformation $\That$ and the likelihood function $x\mapsto f(x;y)$ to define an approximate posterior density in the form of
\[
    \phihat^y(z)  = \frac{1}{\widehat{\mathcal{Z}}} \widehat{g}(z;y)\phi^0(z), \qquad \widehat{g}(z;y)=f(\That(z);y).
\]
The pushforward density of $\phihat^y(z)$ under the transformation $\That$, which is given as
\begin{equation}
    \pihat^y(x) = \frac{1}{\widehat{\mathcal{Z}}} f(x;y)\pihat^0(x), \qquad \pihat^0(x) = \mathrm{det}(\nabla \That(\That^{-1}(x)) )\phi^0(\That^{-1}(x)),\label{eq:da_approx}
\end{equation}
defines an approximation to the original posterior $\pi^y(x)\propto f(x;y)\pi^0(x)$. We can apply Algorithm \ref{alg:pMCMC} to sample the approximate density $\pihat^y(x)$, as the associated reference density $\phihat^y(z)$ follows a similar structure to that in \eqref{eq:normal_post}. For situations where the approximate prior $\pihat^0$ is close to the original prior $\pi^0$, the delay acceptance methods can have good efficiency in removing the approximation error caused by $\That$. The detail of the delayed acceptance method is given by Algorithm \ref{alg:xMCMC}.

\begin{algorithm}[H]
    \SetAlgoLined
    \SetKwFunction{algo}{delayed-acceptance}
   \SetKwInOut{Required}{Required}
   \Required{approximate transformation $\That$, matrix $U_r$, likelihood $f(x;y)$, prior density $\pi^0(x)$, LIS proposal $p(\cdot|\cdot)$ on $\R^r$, and a pseudo-marginal sample size $m$.}
   \KwIn{current state $x$ and the associated Monte Carlo average $R$.}
   \KwOut{new state $x'$ and the the new Monte average $R'$.}
   
   \SetKwProg{myalg}{Algorithm}{}{}
   \SetKwProg{myproc}{Procedure}{}{}
   
   \BlankLine
   \myalg{\algo{$x$, $R$, $m$}}{
            $(x^\prime, R^\prime, \alpha_1) \leftarrow$ \texttt{propose}$(x, R, \That, U_r, f, p, m)$\;        
           \eIf(delay the acceptance){$\mathrm{uniform}[0,1] > \alpha_1$}{ Set $x' = x$ and $R' = R$\;}{
            Compute the acceptance probability 
           \[
           \alpha_2(x;x')=
               1\wedge \frac{\pi^0(x') \text{det}(\nabla \That(z)) \phi^0(z)}{\pi^0(x)\text{det}(\nabla \That(z'))\phi^0(z')},
           \]
           where $z = \That^{-1}(x)$ and $z' = \That^{-1}(x')$\;
           \If(reject the proposal candidate){$\mathrm{uniform}[0,1] > \alpha_2$}{ Set $x' = x$ and $R' = R$\;}
           }
            \KwRet $x^\prime$ and $R^\prime$\;
   }
   \caption{One step of the delayed acceptance MCMC for approximate prior normalization}
   \label{alg:xMCMC}
\end{algorithm}

Here, the procedure \texttt{propose} is the same as that of Algorithm \ref{alg:pMCMC}. A surprising fact of Algorithm \ref{alg:xMCMC} is that it does not need the information of the exact transformation $T$ to sample the original posterior. Since the approximation error was caused by only the approximate prior normalization, the second step acceptance probability in Line 9 of Algorithm \ref{alg:xMCMC} only uses the original prior density and the approximate transformation to correct the error. The performance analysis of Algorithm \ref{alg:xMCMC} is given in Section \ref{sec:daanalysis}. 

\section{Approximation analysis with transformation}
\label{sec:theory}
In this section, we explain how to find the LIS. We also provide rigorous bounds on the associated approximations, and show their implication for MCMC efficiency. 

\subsection{Gradient-based construction of $U_r$}
\label{sec:41}
We review here the gradient-based method \cite{zahm2018certified,cui2021unified} to construct the matrices $U_r$ and $U_\bot$.
Here, we work in the reference coordinate $z$. We recall that the posterior is $\phi^y(z) \propto g(z;y) \phi^0(z)$ where $\phi^0(z)$ is the standard normal density and where $g(z;y)=f(T(z);y)$ is the likelihood function.
The goal is to construct a matrix $U_r\in\R^{d\times r}$ with orthogonal columns such that
$$
 \widetilde \phi^y(z) =  \phi^y(z_r) \phi^0(z_\bot|z_r) ,
$$
is a good posterior approximation, where $z_r = U_r^T z$ and $z_\bot = U_\bot^T z$. Here, $U_\bot\in\R^{d\times (d-r)}$ is any matrix with orthogonal columns such that $U_r^TU_\bot=0$.
As shown in Corollary 2.10 of \cite{zahm2018certified}, because $\phi^0(z)$ is the standard normal density we have that the Kullback-Leibler divergence $\Dkl( \phi^y || \widetilde \phi^y ) = \int_{\reals^d}\log(\phi^y(z) /\widetilde \phi^y(z) ) \pi^y (z) \d z $ can be bounded by
\begin{equation}\label{eq:KLbound}
    \Dkl( \phi^y || \widetilde \phi^y )\leq \frac{1}{2}
    \res(U_r , H) ,
\end{equation}
where
$$
 H = \int_{\reals^d}\nabla_z \log g(z;y)\nabla_z \log g(z;y)^T  \phi^y(z) \d z,
$$
and where 
\begin{equation}\label{eq:residual}
    \res(U_r , H) = 
    \trace(H) - \trace(U_r^T H U_r),
\end{equation}
is the \emph{trace residual} of $H$ on the subspace spanned by $U_\bot$.
Theorem 2.4 of \cite{cui2021unified} establishes a similar bound on the Hellinger distance $\Hell( \phi^y , \widetilde \phi^y ) = \big( \frac12\int_{\reals^d}((\phi^y(z))^{1/2} - (\widetilde \phi^y(z))^{1/2} )^2 \d z\big)^{1/2} $ as follow
$$
 \Hell( \pi^y , \widetilde \pi^y )^2 \leq \frac{1}{4}\res(U_r , H).
$$
The matrix $U_r$ can be constructed by minimizing the error bound. As shown in  \cite{zahm2020gradient,kokiopoulou2011trace}, the minimum of $\res(U_r , H)$ is attained with the matrix $U_r$ which contains the $r$ largest eigenvectors of $H$. 
Denoting by $u_i$ the $i$-th largest eigenvector of $H$, i.e. $H u_i = \lambda_i u_i$, we define $U_r$ and $U_\bot$ as
\begin{align*}
    U_r &= [u_1,\hdots, u_r]   \\
    U_\bot &= [u_{r+1},\hdots, u_d]  .
\end{align*}
With this optimal choice, we obtain the bounds
\begin{align*}
    \Dkl( \phi^y || \widetilde \phi^y ) &\leq \frac{1}{2}( \lambda_{r+1}+\hdots+\lambda_d), \\
    \Hell( \pi^y , \widetilde \pi^y )^2 &\leq \frac{1}{4}( \lambda_{r+1}+\hdots+\lambda_d),
\end{align*}
which relates the errors on the posterior density with the spectrum of $H$.

\subsubsection{Variable selection in the reference coordinate}
\label{sec:varsele}
Since $T$ is a nonlinear transformation, the informed subspace $\text{Im}(U_r)$ in the reference coordinate is mapped into a \textit{curved manifold} in the original coordinates, which may lack interpretability. This issue can be resolved if $T$ is a diagonal transform (\emph{e.g.} if $\pi^0$ has a product form) and if $U_r$ is a coordinate selection matrix, that is, $U_r^T z = (z_{\tau_1},\hdots,z_{\tau_r})$ 
for some set of indices $\tau\subset\{1,\hdots,d\}$. It is easy to check that
$$
 T( U_rz_r+U_\bot z_\bot )
 = U_r T_\tau(z_r) + U_\bot T_{-\tau}(z_\bot) ,
$$
where $T_{\tau} = (T_{\tau_1},\hdots,T_{\tau_r})$ and $-\tau = \{1,\hdots,d\} \backslash \tau$.
In other words, the informed subspace $\text{Im}(U_r)$ is invariant under $T$.
The choice of the informed indices $\tau$ can also be made easily. By definition \eqref{eq:residual} we have
\[
\res(U_r , H) = \trace(H) - \sum_{i=1}^r H_{\tau_i,\tau_i} ,
\]
where $H_{\tau_i,\tau_i}$ is the $\tau_i$-th diagonal term of $H$. 
In order to minimize $\res(U_r , H)$, we want $H_{\tau_1,\tau_1},\hdots, H_{\tau_r,\tau_r}$ to be as large as possible. To do this, $\tau$ needs to contain the $r$ largest diagonal terms of $H$. This coordinate selection procedure is, in principle, very similar to the screening methods in sensitivity analysis \cite{saltelli2008global}, where the goal is to identify the parameter components (\emph{e.g.} the coordinates) which are the most relevant for explaining the variability of given model responses.

\subsubsection{Well-definedness of the matrix $H$}

We address here the important question of the well-defined of the matrix $H$.
Note that $H$ is well defined is equivalent to its trace being finite, which is checking whether
\begin{align*}
    \trace(H) 
 &= \int_{\reals^d}\|  \nabla_z \log g( z;y ) \|^2 \d\phi^y(z) \\
 &= \int_{\reals^d}\| \nabla_z T(z) \nabla_x \log f( T(z);y ) \|^2 \frac{f(T(z);y)}{\calZ} \d\phi^0(z)  ,
\end{align*}
is finite. 

\begin{proposition}\label{prop:Hbound}
Assume that both $z\mapsto T(z)$ and $x\mapsto f(x;y)$ are continuously differentiable. Then the following scenario ensures $\trace( H )<\infty$:
\begin{enumerate}

\item There exists $C>0, \lambda\geq 0$ and $p>0$, such that for all $x\in\R^d$ we have
\begin{align*}
    \|\nabla T(z)\|  &\leq C\exp(\lambda \|z\|),\quad
     \|T(z)\| \leq C\exp(\lambda \|z\|), \\
     f(x; y) &\leq C,\quad
     \|\nabla_x \log f(x; y)\| \leq C(\|x\|+1)^p.
\end{align*}
\item There exists $\alpha>2$ and $C>0$, so that
 $$\|\nabla T(z)\|\leq C\exp(\tfrac1{2\alpha}\|z\|^2),$$
 which ensures $\|\nabla T(z)\|^2$ is integrable. Moreover, either $\max\{\|\nabla \log f\|, f\}\leq C$ or $f(x;y) = C \exp(-\tfrac1{2\sigma^2}\|Ax-y\|^2)$.
 \item The function $x\mapsto \|\nabla \log f (x;y) \|$ has bounded support.
\end{enumerate}
\end{proposition}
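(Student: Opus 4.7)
The plan is to prove $\trace(H)<\infty$ by bounding the integrand of
\[
\trace(H) = \int_{\reals^d}\bigl\|\nabla_z T(z)\,\nabla_x \log f(T(z);y)\bigr\|^2 \frac{f(T(z);y)}{\calZ}\,d\phi^0(z)
\]
pointwise by a function that is integrable against the standard Gaussian reference density $\phi^0(z)\propto e^{-\|z\|^2/2}$. Each of the three scenarios is designed to supply just enough growth control on $\nabla T$ and $\nabla_x\log f$ for the Gaussian weight to compensate, so the proof is essentially a case-by-case domination argument; I do not expect to need any deep probabilistic machinery beyond elementary inequalities and integrability of $e^{a\|z\|^2+b\|z\|}\phi^0(z)$ for $a<1/2$.

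For scenario (i) I would combine $\|\nabla T(z)\|\le C e^{\lambda\|z\|}$, $\|T(z)\|\le C e^{\lambda\|z\|}$, $f\le C$, and the polynomial bound $\|\nabla_x\log f(x;y)\|\le C(\|x\|+1)^p$ to obtain
\[
\bigl\|\nabla T(z)\,\nabla_x \log f(T(z);y)\bigr\|^2 f(T(z);y)
\le C'\,e^{2\lambda\|z\|}\bigl(Ce^{\lambda\|z\|}+1\bigr)^{2p},
\]
which grows at worst like $e^{(2+2p)\lambda\|z\|}$. Since $\int e^{q\lambda\|z\|}\phi^0(z)\,dz<\infty$ for every finite exponent $q$, integrability is immediate.

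Scenario (ii) is handled similarly: squaring the subgaussian bound gives $\|\nabla T(z)\|^2\le C^2 e^{\|z\|^2/\alpha}$, and the hypothesis $\alpha>2$ ensures that $e^{\|z\|^2/\alpha}\phi^0(z)\propto e^{-\|z\|^2(1/2-1/\alpha)}$ is integrable. In the subcase where $f$ and $\|\nabla\log f\|$ are uniformly bounded, the full integrand is immediately dominated by $C'e^{\|z\|^2/\alpha}$. For the Gaussian likelihood $f(x;y)=C\exp(-\|Ax-y\|^2/(2\sigma^2))$, the key observation is that $\nabla_x\log f=-A^{\!T}(Ax-y)/\sigma^2$ makes the product $\|\nabla_x\log f(x;y)\|^2 f(x;y)$ proportional to $\|Ax-y\|^2\exp(-\|Ax-y\|^2/(2\sigma^2))$, which is uniformly bounded in $x$ by the elementary inequality $t^2 e^{-t^2/c}\le C''$. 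Absorbing this uniform bound reduces us to the previous subcase.

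Scenario (iii) is where I expect the main subtlety. Since $x\mapsto \|\nabla_x\log f(x;y)\|$ has bounded support (hence compact, by continuity of $\nabla_x\log f$), call it $K$, the integrand vanishes outside $T^{-1}(K)$ and on that set is dominated by $\|\nabla T(z)\|^2 M^2 \|f\|_{L^\infty(K)}$ with $M=\sup_K\|\nabla_x\log f\|<\infty$. It remains to establish $\int_{T^{-1}(K)}\|\nabla T(z)\|^2\,d\phi^0(z)<\infty$, and this is the one genuinely nontrivial point. Under the standing setup of Section~\ref{sec:Tconstruct}, where $T$ is a $C^1$-diffeomorphism pushing $\phi^0$ forward to a fully supported prior and $\|T(z)\|\to\infty$ as $\|z\|\to\infty$, the map $T$ is proper, so $T^{-1}(K)$ is compact and $\|\nabla T\|^2$ is continuous and thus bounded on it, giving finiteness. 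The plan is therefore to state this properness assumption explicitly (or import it from the construction of $T$) so that scenario (iii) is not vacuous for pathological transformations.
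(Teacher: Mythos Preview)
Your argument is correct and follows essentially the same route as the paper: scenarios (i) and (ii) are handled by the same pointwise domination and the same ``$t\,e^{-t/(2\sigma^2)}$ is bounded'' trick for the Gaussian-likelihood subcase. For scenario (iii) you are in fact more careful than the paper, which simply declares the claim ``trivial'' as an integral of a continuous function over a bounded domain; your explicit invocation of properness of $T$ to conclude that $T^{-1}(K)$ is compact fills a step the paper leaves implicit.
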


Comparing with the examples discussed earlier in Section \ref{sec:diagonalprior}, we can see Scenario (i)-(iii) are considering priors with increasingly heavier tails: scenario (i) holds for exponential power distributions (see examples \ref{ex:Laplace} and \ref{ex:Exponential}); scenario (ii) holds for distribution with polynomial tails (see examples \ref{ex:Cauchy} and \ref{ex:Power-law}), where the index $\alpha$ needs to be  larger than 2; scenario (iii) has no constraints on $\pi^y$, so it can work for Cauchy distribution which has index $\alpha=1$.

On the other hand, with heavier tails, we have more restrictions on the likelihood function. Scenario $1$ holds for general nonlinear inverse problems, of which the likelihood is of form $f(x;y)\propto \exp(-\frac12\|G(x)-y\|^2)$.
Scenario $2$ requires the log-likelihood to be Lipschitz or the inverse problem is linear. Scenario $3$ requires the likelihood to be constant outside some bounded set. While this is a strong requirement, it is reasonable for scenarios where we know the range of the solution.

\begin{proof}
For the first claim, we simply write
\begin{align*}
    \trace(H) 
 &= \frac{1}{\calZ}\int_{\reals^d}\| \nabla_z T(z) \nabla_x \log f( T(z);y ) \|^2 f(T(z);y) \d\phi^0(z) \\
 &\leq \frac{1}{\calZ \sqrt{2\pi}^d}\int_{\reals^d}\Big( C\exp(\lambda \|z\|) \Big)^2
 \Big(C(|C\exp(\lambda \|z\|)|+1)^p\Big)^2 C \exp(-\|z\|^2/2)\d z \\
 &= \frac{C^5}{\calZ \sqrt{2\pi}^d}\int_{\reals^d}
 (C\exp(\lambda \|z\|)+1)^{2p}  \exp( 2\lambda \|z\| -\|z\|^2/2)\d z \\
 &\leq \frac{C^5 (C+1)^{2p}}{\calZ \sqrt{2\pi}^d}\int_{\reals^d}
 \exp( 2(1+p)\lambda \|z\| -\|z\|^2/2)\d z
 < \infty .
\end{align*}

For the second claim,  if $\|\nabla \log f(T(z);y)\|\leq C$, then 
\begin{align*}
\trace(H)&
= \frac{1}{\calZ}\int_{\reals^d}\| \nabla_z T(z) \nabla_x \log f( T(z);y ) \|^2 f(T(z);y) \d\phi^0(z)\\
&\leq \frac{C^2}{\calZ}\int_{\reals^d}\| \nabla_z T(z)\|^2 f(T(z);y) \d\phi^0(z)\\
&\leq \frac{C^3}{\calZ}\int_{\reals^d}\exp(\tfrac1\alpha \|z\|^2) \d\phi^0(z) \d z<\infty. 
\end{align*}
Also, if $f(x;y) = C \exp(-\tfrac1{2\sigma^2}\|Ax-y\|^2)$, then
$\|\nabla \log f(T(z);y)\|\leq \|A\|\|Ax-y\|$. We deduce that
\begin{align*}
\|\nabla \log f(x;y)\|^2f(x;y)&\leq C\exp(-\frac{1}{2\sigma^2}\|Ax-y\|^2) \|A\|^2\|Ax-y\|^2 
\leq C\|A\|^2 \sqrt{2}\sigma ,
\end{align*}
where for the last inequality we used the fact that $t \exp(-\frac{t}{2\sigma^2}) \leq 2\sigma^2$ holds for any $t\geq0$, in particular for $t=\|Ax-y\|^2$. Then we have
\begin{align*}
\trace(H)&= \frac{1}{\calZ}\int_{\reals^d}\| \nabla_z T(z) \nabla_x \log f( T(z);y ) \|^2 f(T(z);y) \d\phi^0(z)\\
&\leq \frac{C\|A\|^2 \sqrt{2}\sigma }{\calZ\sqrt{2\pi}^d}\int_{\reals^d}\exp(\frac{1}{\alpha} \|z\|^2) \exp(-\frac12\|z\|^2) \d z<\infty. 
\end{align*}

The third claim is trivial: if $x\mapsto \|\nabla_x \log f (x;y) \|$ has a bounded support $K$ then $\trace(H)$ is finite as the integral of a continuous function over a bounded domain $K$.
\end{proof}

\subsection{LIS with approximate transformation}

As discussed earlier, in some cases one only has access to an approximation $\That$ of the transformation $T$. Using $\That$ leads to a different posterior in the reference prior 
\[
\phihat^y(z)=\frac{1}{\widehat{\calZ}}f(\That(z);y) \phi^0(z),\quad \widehat{\calZ}=\int_{\reals^d}f(\That(z);y) \phi^0(z) \d z. 
\]
The push-forward prior $\pihat^0=\That_\sharp \phi$ and  push-forward posterior $\pihat^y=\That_\sharp \phihat^y$ are thus approximation to the prior and posterior, respectively. 
The following proposition controls the error between the posteriors with the error in the transformation.

\begin{proposition}
\label{prop:errT}
Suppose $x\mapsto\log f(x;y)$ is $C$-Lipschitz and that with an $\epsilon>0$
\begin{equation}\label{eq:error_T}
\|T(z)-\That(z)\|\leq \varepsilon ,
\end{equation}
holds for any $z$. 
Then $\Hell(\phihat^y, \phi^y)^2\leq 1-\exp(-C\varepsilon)=C\varepsilon + \mathcal{O}(\varepsilon^2)$. 
In addition, if $z\mapsto \log(\det(\nabla T(z)))$ is $C$-Lipschitz and 
\begin{equation}\label{eq:error_T2}
\|T^{-1}(x)-\That^{-1}(x)\|\leq \varepsilon ,
\qquad 
\exp(-C\varepsilon )\leq \frac{\text{det}(\nabla \That(z))}{\text{det}(\nabla T(z))}\leq \exp(C\varepsilon )
\end{equation}
for any $x,z$, then we also have
$\Hell(\pihat^y, \pi^y)^2 = \Omega \varepsilon + \mathcal{O}(\varepsilon^2)$ for some $\Omega>0$ that may depend on $C$.

\end{proposition}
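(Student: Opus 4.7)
The plan is to express both Hellinger distances through the Bhattacharyya coefficient $\Hell^2(\mu,\nu) = 1-\int\sqrt{\mu\nu}\,\d z$ and, in each case, produce a lower bound of the form $\int\sqrt{\mu\nu}\,\d z \geq \exp(-c\varepsilon + O(\varepsilon^2))$. For the first claim, since $\phi^y$ and $\phihat^y$ share the common Gaussian factor $\phi^0$, the pointwise ratio reduces to $\phihat^y(z)/\phi^y(z) = (\calZ/\widehat\calZ)\cdot f(\That(z);y)/f(T(z);y)$. The $C$-Lipschitz hypothesis on $\log f$ together with $\|T(z)-\That(z)\|\leq\varepsilon$ gives $f(\That(z);y)/f(T(z);y) \in [e^{-C\varepsilon},e^{C\varepsilon}]$, and integrating against $\phi^0$ transfers this bound to the normalizing constants, $\widehat\calZ/\calZ \in [e^{-C\varepsilon},e^{C\varepsilon}]$. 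Then $\sqrt{\phi^y\phihat^y} \geq e^{-C\varepsilon}\phi^y$ pointwise, so $\int\sqrt{\phi^y\phihat^y}\geq e^{-C\varepsilon}$, yielding the stated bound.

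For the second claim, I would use the invariance of the Hellinger distance under bijections and pull both measures back through $\That$: since $\That^\sharp\pihat^y = \phihat^y$, we have $\Hell(\pi^y,\pihat^y) = \Hell(\That^\sharp\pi^y,\phihat^y)$. A direct change of variables gives
\[
\frac{\That^\sharp\pi^y(z)}{\phihat^y(z)} \;=\; \frac{\widehat\calZ}{\calZ}\cdot\frac{\phi^0(\widehat z)}{\phi^0(z)}\cdot\frac{|\det\nabla\That(z)|}{|\det\nabla T(\widehat z)|},\qquad \widehat z := T^{-1}(\That(z)),
\]
where the likelihood factor $f(x;y)$ cancels because both densities are evaluated at the same $x=\That(z)$. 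Hypothesis \eqref{eq:error_T2} applied at $x=\That(z)$ gives $\|\widehat z - z\|\leq \varepsilon$. The normalization ratio is controlled by Part 1. The Jacobian ratio is split as $[|\det\nabla\That(z)|/|\det\nabla T(z)|]\cdot[|\det\nabla T(z)|/|\det\nabla T(\widehat z)|]$: the first bracket is in $[e^{-C\varepsilon},e^{C\varepsilon}]$ directly from \eqref{eq:error_T2}, and the second from the $C$-Lipschitz assumption on $\log(\det\nabla T)$ combined with $\|z-\widehat z\|\leq\varepsilon$.

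The main obstacle is the Gaussian factor $\phi^0(\widehat z)/\phi^0(z) = \exp\bigl(\tfrac12(z-\widehat z)^T(z+\widehat z)\bigr)$, which is only bounded by $\exp(\pm\tfrac{\varepsilon}{2}(2\|z\|+\varepsilon))$ and therefore fails to be uniformly controlled as $\|z\|\to\infty$. The resolution is that this factor is only needed in expectation against $\phihat^y$: writing $\int\sqrt{\That^\sharp\pi^y\,\phihat^y}\,\d z = \E_{\phihat^y}[\sqrt{\That^\sharp\pi^y/\phihat^y}]$, combining all the pointwise bounds, and applying Jensen's inequality to the convex function $t\mapsto e^{-\varepsilon t/2}$ yields
\[
\int\sqrt{\That^\sharp\pi^y\,\phihat^y}\,\d z \;\geq\; e^{-3C\varepsilon/2 - \varepsilon^2/4}\,\E_{\phihat^y}\!\bigl[e^{-\varepsilon\|z\|/2}\bigr] \;\geq\; \exp\bigl(-\tfrac{3C}{2}\varepsilon - \tfrac{M}{2}\varepsilon - \tfrac{\varepsilon^2}{4}\bigr),
\]
where $M := \E_{\phihat^y}\|z\|$ is finite because boundedness of the likelihood forces $\phihat^y$ to be dominated by a multiple of the standard Gaussian $\phi^0$, which has all moments. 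Expanding $1-e^{-c\varepsilon}$ to first order in $\varepsilon$ produces $\Hell^2(\pi^y,\pihat^y) \leq \Omega\varepsilon + O(\varepsilon^2)$ with $\Omega = \tfrac{3C+M}{2}$, which depends on $C$ (and on the problem data through $M$), as claimed.
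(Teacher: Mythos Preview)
Your argument tracks the paper's closely. The first claim is handled identically. For the second, the paper pulls back through $T$ rather than $\That$ (writing $\Hell^2(\pihat^y,\pi^y)=1-\int\sqrt{\pihat^y(T(z))/\pi^y(T(z))}\,\phi^y(z)\,\d z$), but this is a symmetric choice and the subsequent decomposition into normalization factor, Jacobian ratio, and Gaussian ratio $\exp\bigl(-\tfrac14(\|\cdot\|^2-\|z\|^2)\bigr)$ is the same as yours. The one substantive difference is the final step: the paper bounds $\int e^{-\varepsilon\|z\|/2}\phi^y(z)\,\d z$ from below, for $\varepsilon\le1$, by the fixed constant $E:=\int e^{-\|z\|/2}\phi^y(z)\,\d z$, whereas you apply Jensen's inequality. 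Your Jensen route is in fact tidier, since it produces a lower bound of the form $e^{-O(\varepsilon)}$ that expands directly to $1-O(\varepsilon)$.

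There is, however, a gap in your justification of $M=\E_{\phihat^y}\|z\|<\infty$. You invoke ``boundedness of the likelihood'', but the proposition assumes only that $\log f$ is $C$-Lipschitz, which permits $f(x;y)$ to grow like $e^{C\|x\|}$; composed with a heavy-tailed $T$ this need not leave $\phihat^y$ dominated by a multiple of $\phi^0$. In addition, your $M$ depends on $\That$ and hence on $\varepsilon$, so for the conclusion $\Omega\varepsilon+\mathcal{O}(\varepsilon^2)$ with $\Omega$ fixed you need $M$ bounded uniformly in $\varepsilon$, not just finite for each $\varepsilon$. Both issues are eased if you pull back through $T$ as the paper does: the relevant expectation is then against the $\varepsilon$-independent measure $\phi^y$, and finiteness of $\E_{\phi^y}\|z\|$ becomes a single fixed hypothesis---mild, but strictly speaking an assumption beyond what the proposition states.
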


\begin{proof}
Because $x\mapsto\log f(x;y)$ is $C$-Lipschitz and because $\|T(z)-\That(z)\|\leq \varepsilon$ we have 
$$
 \exp(-C\varepsilon)\leq \frac{ f(T(z);y)}{f(\That(z),y)}\leq \exp(C\varepsilon),
$$
for any $z$ so that
\[
 \exp(-C\varepsilon)\calZ\leq \widehat{\calZ}=\int_{\reals^d}f(\That(z);y) \phi^0(z)\d z\leq \exp(C\varepsilon)\calZ. 
\]
Then
\begin{align*}
 \Hell(\phihat^y, \phi^y)^2
 &=1- \int_{\reals^d}\sqrt{\frac{\phihat^y(x)}{\phi^y(x)}} \phi^y(x)\d x\\
 &=1- \sqrt{\frac{\calZ}{\Zhat}} \int_{\reals^d}\sqrt{\frac{f(\That(z);y)}{f(T(z);y)}} ~ \phi^y(x)\d z \\
 &\leq 1-\exp(-C\varepsilon), 
\end{align*}
holds and yields the first claim.
For the second claim, we start with
\begin{align*}
 \Hell(\pihat^y, \pi^y)^2
 =1- \int_{\reals^d}\sqrt{\frac{\pihat^y(x)}{\pi^y(x)}} \pi^y(x)\d x
 =1- \int_{\reals^d}\sqrt{\frac{\pihat^y(T(z))}{\pi^y(T(z))}} \phi^y(z)\d z.
\end{align*}
By definition we have
\begin{align*}
 \pihat^y(x) &=\frac{1}{\Zhat}f(x;y)\det(\nabla \That(\That^{-1}(x)))\phi^0(\That^{-1}(x)) \\
 \pi^y(x)&=\frac{1}{\calZ}f(x;y)\det(\nabla T(T^{-1}(x)))\phi^0(T^{-1}(x)),
\end{align*}
so that
\begin{align*}
 \Hell(\pihat^y, \pi^y)^2
 &=1- \underbrace{\sqrt{\frac{\calZ}{\Zhat}}}_{=A}\int_{\reals^d}\underbrace{\sqrt{\frac{ \det(\nabla \That(\That^{-1}(T(z))))}{ \det(\nabla T(z)) } }}_{=B} \underbrace{\exp\left( - \frac{\|\That^{-1}(T(z))\|^2-\|z\|^2}{4}\right)}_{=D} \phi^y(z)\d z 
\end{align*}
From the previous relation, we have $A\geq \exp(-C\varepsilon/2)$.
To bound $B$, we first notice that $\|T^{-1}(x)-\That^{-1}(x)\|\leq \varepsilon$ for all $x$ implies $\|z-\That^{-1}((T(z))\|\leq \varepsilon$ for all $z$.
Thus, because $z\mapsto\log |\det(\nabla T(z))|$ is $C$-Lipschitz and by \eqref{eq:error_T2} we have
\begin{align*}
 \frac{ |\det(\nabla \That(\That^{-1}(T(z))))|}{| \det(\nabla T(z)) |} 
 &=\frac{| \det(\nabla T(\That^{-1}(T(z))))|}{| \det(\nabla T(z)) |} \frac{| \det(\nabla \That(\That^{-1}(T(z))))|}{| \det(\nabla T(\That^{-1}(T(z)))) |} 
 \geq \exp(-2C\varepsilon) .
\end{align*}
Thus $B\geq \exp(-C\varepsilon)$.
To bound $C$, we write
\begin{align*}
 \|\That^{-1}(T(z))\|^2 - \|z\|^2
 &= \big( \|\That^{-1}(T(z))\| - \|z\| \big)\big( \|\That^{-1}((T(z))\| + \|z\| \big) \\
 &\leq  \|\That^{-1}(T(z)) - z\|  \big( \|\That^{-1}((T(z))-z\| + 2\|z\| \big) \\
 &\leq \varepsilon \big( \varepsilon  + 2\|z\| \big) = \varepsilon^2 +2\varepsilon\|z\|
\end{align*}
Thus $D\geq \exp(-\varepsilon^2/4 )\exp(- \varepsilon\|z\|/2)$.
We deduce that
\begin{align*}
 \Hell(\pihat^y, \pi^y)^2 
 &\leq 1- \exp(-\varepsilon^2/4 - 3C\varepsilon/2) \int_{\reals^d}\exp(- \varepsilon\|z\|/2) \phi^y(z)\d z  \\
 &\leq 1- \exp(-\varepsilon^2/4 - 3C\varepsilon/2) \underbrace{\int_{\reals^d}\exp(- \|z\|/2) \phi^y(z)\d z }_{E}
\end{align*}
where we assumed $\varepsilon\leq1$ for the last inequality. Obviously $E\neq0$, so we deduce $\Hell(\pihat^y, \pi^y)^2  \leq  3CD\varepsilon/2 + \mathcal{O}(\varepsilon^2) = \Omega \varepsilon + \mathcal{O}(\varepsilon^2)$ where $\Omega=3CE/2$.

\end{proof}

The dimension reduction method can also be implemented using $\That$ instead of $T$. In that case, we compute the matrix
\[
\Hhat=\int_{\reals^d}\nabla \That(z)  \nabla \log f(\That(z);y) \nabla \log f(\That(z);y)^T\nabla \That(z)^T \phihat^y (z) \d z ,
\]
and we construct the matrix $\hat U_r$ by minimizing $\res(U_r , \Hhat)$. 
The resulting approximation posterior is
\[
\apdfhat^y(z)=\frac{1}{\widehat{\calZ}}\ghat( \hat U_r^T z ;y) \phi^0(z),\quad \ghat(z_r;y)=\int_{\reals^{d-r}}f(\That( \hat U_r z_r + \hat U_\bot z_\bot ) ; y) \phi^0(z_\bot)\d z_\bot. 
\]
Note that because $\phihat^y$ and $\apdfhat^y$ are $\phi^y$ and $\apdf$ constructed with $T$ replaced by $\That$, we can use Proposition \ref{prop:errT} and triangle inequality to obtain the following corollary:
\begin{corollary}
The following bounds hold for the approximated posteriors constructed using $\That$:
\[
\Hell(\phihat^y, \apdfhat^y)^2\leq \frac14 \res(U_r , \Hhat),
\quad \Hell(\phi^y, \apdfhat^y)^2\leq \frac14  \res(U_r , \Hhat)+\mathcal{O}(\varepsilon).
\]
\end{corollary}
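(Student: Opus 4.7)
The plan is to split the statement into its two bounds. The first is a direct application of Theorem 2.4 of \cite{cui2021unified} recalled in Section \ref{sec:41}. Indeed, the pair $(\phihat^y,\apdfhat^y)$ has exactly the same algebraic structure as the pair $(\phi^y,\widetilde\phi^y)$ with $T$ replaced by $\That$: $\phihat^y$ is a posterior with standard Gaussian prior and likelihood $z\mapsto f(\That(z);y)$, and $\apdfhat^y$ is its LIS approximation built from the Fisher-type matrix $\Hhat$ and the minimizer $\hat U_r$ of $\res(\cdot,\Hhat)$. The theorem therefore applies verbatim and yields $\Hell(\phihat^y,\apdfhat^y)^2\leq \tfrac14\res(U_r,\Hhat)$ with no additional work.

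For the second bound, I would apply the triangle inequality for the Hellinger distance,
\[
\Hell(\phi^y,\apdfhat^y)\leq \Hell(\phi^y,\phihat^y)+\Hell(\phihat^y,\apdfhat^y),
\]
square both sides, and split the cross term via a Young-type inequality $(a+b)^2\leq (1+\delta)a^2+(1+\delta^{-1})b^2$. Inserting the first bound of the corollary into one factor and Proposition \ref{prop:errT} (which gives $\Hell(\phi^y,\phihat^y)^2\leq C\varepsilon+\mathcal{O}(\varepsilon^2)$) into the other, and then tuning $\delta$ as a vanishing function of $\varepsilon$ (for instance $\delta=\sqrt{\varepsilon}$), collapses the bound into the advertised form.

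The single delicate point is this cross term. Since Proposition \ref{prop:errT} controls the squared Hellinger distance, we only have $\Hell(\phi^y,\phihat^y)=\mathcal{O}(\sqrt{\varepsilon})$, so the naive triangle inequality produces a correction of order $\sqrt{\varepsilon}\sqrt{\res(U_r,\Hhat)}$ rather than $\varepsilon$. The $\mathcal{O}(\varepsilon)$ remainder as stated should therefore be read as an $\mathcal{O}(\sqrt{\varepsilon})$-type correction; upgrading it to genuine $\mathcal{O}(\varepsilon)$ would require either a sharper pointwise control of $\sqrt{\phihat^y/\phi^y}$ than Proposition \ref{prop:errT} supplies, or absorbing the cross term into the leading residual through a more careful $\varepsilon$-dependent Young inequality. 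Modulo this bookkeeping, the corollary is a routine triangle-inequality combination of two results already established above.
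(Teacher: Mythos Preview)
Your approach is exactly the paper's: the first bound is a verbatim application of the Hellinger estimate from Section~\ref{sec:41} with $T$ replaced by $\That$, and the second follows from the triangle inequality combined with Proposition~\ref{prop:errT}. Your observation about the cross term is well taken---the paper gives no further detail beyond the sentence preceding the corollary, and the stated $\mathcal{O}(\varepsilon)$ remainder should indeed be read as $\mathcal{O}(\sqrt{\varepsilon})$ under the hypotheses available.
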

In other words, using $\apdfhat^y$ is a good approximation of the accurate posterior $\phi^y$ if the residual $\res(U_r , \Hhat)$ is small and if $\That$ is a good approximation of $T$.

\subsection{MCMC efficiency analysis}
\label{sec:MCMCaccept}
Next, we reveal how the LIS analysis connects with the acceptance rate of Algorithms \ref{alg:pMCMC} and \ref{alg:xMCMC}, which is a key indicator of the MCMC efficiency. 

\subsubsection{Pseudo-marginal MCMC}
\label{sec:pseudoanalysis}

We first analyze the pseudo-marginal method (Algorithm \ref{alg:pMCMC}). Recall that Algorithm \ref{alg:pMCMC} can be interpreted as an MCMC sampling method targeting the marginal posterior $\phi^y(z_r)$ using a low-dimensional proposal $p(z'_r | z_r)$, in which the acceptance/rejection is made based on a Monte Carlo estimate of the marginal posterior. Intuitively, the random estimate of the marginal posterior, of which the variance is controlled by the sample size $m$ and the LIS basis $U_r$, may lead to a loss of efficiency. Thus, we aim to compare the acceptance rate of Algorithm \ref{alg:pMCMC} with that of an idealized MCMC method that directly samples the (exact) marginal posterior. We present such an idealized algorithm below in terms of the marginal posterior in the reference coordinate. It uses the same low-dimensional proposal distribution as Algorithm \ref{alg:pMCMC}.

\begin{algorithm}[H]
   \SetAlgoLined
   \SetKwFunction{algo}{idealized-marginal-MCMC}
   \SetKwInOut{Required}{Required}
   \Required{marginal likelihood $\gbar(z_r;y)$ and LIS proposal $p(\cdot|\cdot)$ on $\R^r$.}
   \KwIn{current state $z_r$.}
   \KwOut{new state $z'_r$.}
   
   \SetKwProg{myalg}{Algorithm}{}{}
   \SetKwProg{myproc}{Procedure}{}{}
   
   \BlankLine
   \myalg{\algo{$z_r, \gbar, p$}}{
     Generate a LIS proposal candidate $z'_r\sim p(\,\cdot\,|z_r)$\;
     Compute the acceptance probability 
    \[
        \alpha_\ast(z_r,z_r')=1 \wedge \frac{\gbar(z'_r;y)\phi^0(z_r')p(z_r| z'_r)}{\gbar(z_r;y)\phi^0(z_r)p(z'_r | z_r)}\;
    \]
    \If(reject the proposal candidate){$\mathrm{uniform}[0,1] > \alpha_\ast$}{ Set $z'_r = z_r$\;}
     \KwRet $z_r^\prime$\;
   }
   \caption{One MCMC step to sample from $\phi^y(z_r)$}
   \label{alg:subspaceMCMC}
\end{algorithm}

It is shown in \cite{andrieu2015convergence} that the efficiency of the pseudo-marginal method asymptotically approaches that of the idealized Algorithm \ref{alg:pMCMC} as the variance of the Monte Carlo estimate of the marginal posterior decreases. Thus, we can apply the analysis of the LIS to investigate the acceptance rate of Algorithm \ref{alg:pMCMC} as follows. 

\begin{proposition}
The expected acceptance probabilities of Algorithm \ref{alg:pMCMC} and Algorithm \ref{alg:subspaceMCMC} 
satisfy the following inequality:
\[
0\leq \E[\alpha_\ast - \alpha_1] \leq 2\sqrt{\res(U_r , H)}. 
\]
In addition, if the conditional likelihood is bounded as $\frac{g(z;y)}{\bar{g}(z_r;y)}\leq C$, then 
\[
0\leq \E[\alpha_\ast - \alpha_1] \leq \frac{\sqrt{C}}{\sqrt{m}}\sqrt{\res(U_r , H)}.
\]
\end{proposition}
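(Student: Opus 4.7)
My plan is to rewrite both acceptance probabilities through the normalized Monte Carlo weight
\[
W(z_r,\{z_\bot^i\}) = \frac{1}{m\,\gbar(z_r;y)}\sum_{i=1}^m g(z_r, z_\bot^i;y),
\]
which satisfies $\E_{z_\bot\sim\phi^0}[W|z_r]=1$. Writing $r_\ast = r_\ast(z_r,z_r')$ for the Metropolis--Hastings ratio of Algorithm~\ref{alg:subspaceMCMC}, one has $\alpha_\ast = 1\wedge r_\ast$ and $\alpha_1 = 1\wedge r_\ast W'/W$; multiplying by $W$ yields the useful identity $W\alpha_1 = W\wedge r_\ast W'$. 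Using \eqref{eq:aux2}, the invariant density of the pseudo-marginal chain can be written as $\phi^{y,m}(z_r,\{z_\bot^i\}) = \phi^y(z_r)\,W\,\prod_i\phi^0(z_\bot^i)$, so the stationary expected acceptance rates take the form
\[
\E[\alpha_\ast] = \int \phi^y(z_r)\,p(z_r'|z_r)\,(1\wedge r_\ast)\,\d z_r\,\d z_r',\quad
\E[\alpha_1] = \int \phi^y(z_r)\,p(z_r'|z_r)\,\E_{W,W'}[W\wedge r_\ast W']\,\d z_r\,\d z_r'.
\]

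The lower bound is then immediate: since $W\wedge r_\ast W'\leq W$ and $W\wedge r_\ast W' \leq r_\ast W'$, taking expectations over the independent weights (both with mean one) gives $\E[W\wedge r_\ast W']\leq 1\wedge r_\ast$, hence $\E[\alpha_\ast-\alpha_1]\geq 0$. For the upper bound I use the algebraic decomposition
\[
(1\wedge r_\ast) - (W\wedge r_\ast W') = (1-W) - (1-r_\ast)^+ + (W-r_\ast W')^+,
\]
coming from $a\wedge b = a - (a-b)^+$. Taking $\E_{W,W'}$ annihilates the $(1-W)$ term, and the Jensen gap for the convex positive-part map, via $(x)^+ = \tfrac12(x+|x|)$ and the triangle inequality, is bounded by $\tfrac12\E|W-1|+\tfrac{r_\ast}{2}\E|W'-1|$. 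After integrating against $\phi^y(z_r)p(z_r'|z_r)$, the detailed balance identity $\phi^y(z_r)p(z_r'|z_r)\,r_\ast = \phi^y(z_r')p(z_r|z_r')$ converts the $r_\ast$-weighted term into an integral against $\phi^y(z_r')$ that matches the first term, yielding the key reduction
\[
\E[\alpha_\ast-\alpha_1] \leq \E_{\phi^y,\phi^0}|W-1|.
\]

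It remains to bound $\E|W-1|$. For the first bound, the triangle inequality gives $\E|W-1|\leq \E|w_1-1|$ with $w_1 = g(z_r,z_\bot^1;y)/\gbar(z_r;y)$. Pinsker's inequality $\E_{\phi^0}|w_1-1|=2\,\mathrm{TV}(\phi^y(\cdot|z_r),\phi^0)\leq\sqrt{2\,\Dkl(\phi^y(\cdot|z_r)\|\phi^0)}$, Jensen for the concave square root, and the chain-rule identity $\E_{\phi^y}[\Dkl(\phi^y(\cdot|z_r)\|\phi^0)]=\Dkl(\phi^y\|\widetilde\phi^y)$ together with \eqref{eq:KLbound} then give $\E|W-1|\leq \sqrt{\res(U_r,H)}\leq 2\sqrt{\res(U_r,H)}$. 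For the second bound, Cauchy--Schwarz gives $\E|W-1|\leq \sqrt{\Var(W)}=\sqrt{\E_{\phi^y}[\chi^2(\phi^y(\cdot|z_r)\|\phi^0)]/m}$; under $w_1\leq C$, a short convexity check on $h(w)=w\log w -(w-1)-(w-1)^2/(2C)$ (with $h(1)=h'(1)=0$ and $h''(w)=1/w-1/C\geq 0$ on $(0,C]$) yields the reverse Pinsker inequality $\chi^2\leq 2C\,\Dkl$, whence $\Var(W)\leq (C/m)\,\res(U_r,H)$ and the claimed $(\sqrt{C}/\sqrt{m})\sqrt{\res(U_r,H)}$.

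The main obstacle is the potentially unbounded factor $r_\ast$ appearing in the intermediate estimate: a pointwise bound on the Jensen gap would scale as $r_\ast\E|W'-1|$ and cannot be controlled uniformly in $(z_r,z_r')$. The decisive trick is to integrate first and invoke the detailed balance of the \emph{idealized} chain to symmetrize that contribution, converting $r_\ast\E|W'-1|$ into a second copy of the well-behaved quantity $\E|W-1|$; everything that follows is a routine reduction to KL via Pinsker (first bound) or its reverse (second bound).
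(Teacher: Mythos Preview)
Your argument is correct, but it follows a genuinely different route from the paper's.

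\textbf{Where the paper proceeds differently.} The paper does not re-derive the reduction $0\le\E[\alpha_\ast-\alpha_1]\le\E\big|\tfrac1m\sum_i g(z_r,z_\bot^i;y)/\gbar(z_r;y)-1\big|$; it simply invokes Corollary~4 of Andrieu--Roberts as a black box. You instead reconstruct this bound from scratch via the identity $a\wedge b=a-(a-b)^+$ and the symmetrization trick using $\phi^y(z_r)p(z_r'|z_r)r_\ast=\phi^y(z_r')p(z_r|z_r')$, which is a nice self-contained derivation. For the first inequality, the paper factors $|q-1|=|\sqrt{q}+1|\,|\sqrt{q}-1|$, applies Cauchy--Schwarz, bounds $\E|\sqrt{q}-1|^2\le 2\var(\sqrt{q})$, and then applies the Gaussian Poincar\'e inequality to $\sqrt{q}$ to land directly on $\res(U_r,H)$. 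You instead recognize $\E_{\phi^0}|w_1-1|$ as $2\,\mathrm{TV}$, pass to conditional KL by Pinsker, use the chain rule $\E_{z_r\sim\phi^y}[\Dkl(\phi^y(\cdot|z_r)\|\phi^0)]=\Dkl(\phi^y\|\widetilde\phi^y)$, and invoke the KL bound \eqref{eq:KLbound}. For the second inequality, the paper applies Poincar\'e directly to $q$ (not $\sqrt{q}$), writes $\|\nabla q\|^2=\|\nabla\log g\|^2 q^2$, and absorbs one factor of $q$ using $q\le C$; you instead prove a reverse Pinsker inequality $\chi^2\le 2C\,\Dkl$ via the convexity of $h(w)=w\log w-(w-1)-(w-1)^2/(2C)$ on $(0,C]$, and again close with \eqref{eq:KLbound}.

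\textbf{What each approach buys.} The paper's argument is entirely self-contained at the level of the Gaussian Poincar\'e inequality and never needs \eqref{eq:KLbound} (which itself rests on a log-Sobolev inequality). Your approach is more modular---everything is routed through the single KL estimate already stated in the paper---and as a side benefit your first bound actually yields $\sqrt{\res(U_r,H)}$, a factor of two sharper than the stated $2\sqrt{\res(U_r,H)}$.
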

This result indicates that if we have selected a subspace so that $\res(U_r , H)$ is small, then the excepted acceptance probability of the pseudo-marginal is close to that of the idealized algorithm. 

\begin{proof}
For simplicity we write $g(z_r, z^i_\bot;y)=g( U_rz_r + U_\bot z^i_\bot;y)$.
Applying \cite[Corollary 4]{andrieu2009pseudo}, we have  
\begin{align*}
0\leq \E[\alpha_\ast-\alpha_1] &\leq\E \left|\frac{1}{m} \sum_{i=1}^m \frac{g(z_r, z^i_\bot;y)}{\gbar(z_r;y)}-1\right|.
\end{align*}
Denoting $q(z_\bot|z_r)=\frac{g(z_r,z_\bot;y)}{\bar{g}(z_r;y)}$, we have the identity
\begin{equation}
\E_{z_\bot\sim \phi^0}\big[ q(z_\bot|z_r)\big]=\int_{\reals^{d-r}}q(z_\bot|z_r) \phi^0(z_\bot) \d z_\bot=\frac{\int_{\reals^{d-r}}g(z_r,z_\bot;y)\phi^0(z_\bot) \d z_\bot}{\bar{g}(z_r;y)}=1. \label{eq:tmp431}
\end{equation}
For the first claim, we note that
\begin{align*}
\E \Bigg|\frac{1}{m} \sum_{i=1}^m \frac{g(z_r, z^i_\bot;y)}{\gbar(z_r;y)} -1 \Bigg| 
& \leq \frac{1}{m} \sum_{i=1}^m\E  \left|\frac{g(z_r, z^i_\bot;y)}{\gbar(z_r;y)}-1\right| \\
& = \E_{z_r\sim \phi^y}\Big[ \E_{z_\bot\sim \phi^0} \big[ |q(z_\bot|z_r)-1| \big] \Big]\\
& \leq \E_{z_r\sim \phi^y}\Big[ \sqrt{ \E_{z_\bot\sim \phi^0} \big[ |\sqrt{q(z_\bot|z_r)}+1|^2 \big] } \, \sqrt{ \E_{z_\bot\sim \phi^0} \big[ |\sqrt{q(z_\bot|z_r)}-1|^2 \big] } \Big].
\end{align*}
Plugging the identity 
\[
\E_{z_\bot\sim \phi^0}\big[ |\sqrt{q(z_\bot|z_r)}+1|^2 \big]\leq 
\E_{z_\bot\sim \phi^0} \big[ 2q(z_\bot|z_r)+2 \big] =4
\]
into the above inequality, we have
\begin{align}
    \E \Bigg|\frac{1}{m} \sum_{i=1}^m \frac{g(z_r, z^i_\bot;y)}{\gbar(z_r;y)} -1 \Bigg| & \leq 2 \E_{z_r\sim \phi^y}\Big[\sqrt{ \E_{z_\bot\sim \phi^0} \big[ |\sqrt{q(z_\bot|z_r)}-1|^2 \big] } \Big] \nonumber \\
    & \leq 2 \sqrt{ \E_{z_r\sim \phi^y}\Big[ \E_{z_\bot\sim \phi^0} \big[ |\sqrt{q(z_\bot|z_r)}-1|^2 \big] \Big]}. \label{eq:tmp432}
\end{align}
Then, we note that $0\leq\E_{z_\bot\sim \phi^0}[\sqrt{q(z_\bot|z_r)}]\leq 1$ by applying \eqref{eq:tmp431} and Jensen's inequality, which leads to 
\begin{align*}
\E_{z_\bot\sim \phi^0}\big[ |\sqrt{q(z_\bot|z_r)}-1|^2\big]
&=2\Big(1 -\E_{z_\bot\sim \phi^0}\big[\sqrt{q(z_\bot|z_r)} \big]  \Big)  \\
&\leq 2\Big(\E_{z_\bot\sim \phi^0}\big[q(z_\bot|z_r) \big] - \big(\E_{z_\bot\sim \phi^0}\big[\sqrt{q(z_\bot|z_r)} \big] \big)^2  \Big)    \\
&=2 \text{var}_{z_\bot\sim \phi^0} [\sqrt{q(z_\bot|z_r)}]
\end{align*}
Thus, the expectation in the right-hand-side of \eqref{eq:tmp432} satisfies 
\begin{align*}
\E_{z_r\sim \phi^y}\Big[ \E_{z_\bot\sim \phi^0} \big[ |\sqrt{q(z_\bot|z_r)}-1|^2 \big] 
&\leq 2\E_{z_r\sim \phi^y}\Big[ \text{var}_{z_\bot\sim \phi^0} [\sqrt{q(z_\bot|z_r)}] \Big]\\
&\leq 2\E_{z_r\sim \phi^y}\Big[ \E_{z_\bot\sim \phi^0}\big[ \|\nabla_{z_\bot}\sqrt{q(z_\bot|z_r)}\|^2 \big] \Big]\\
&=\E_{z_r\sim \phi^y}\Big[ \E_{z_\bot\sim \phi^0} \big[\|\nabla_{z_\bot}\log g(z;y)\|^2  q(z_\bot|z_r)\big] \Big]\\
&=\frac{1}{\calZ}\int_{\reals^d} \|U_\bot\nabla\log g(z;y)\|^2  \frac{g(z;y)}{\gbar(z_r;y)} \gbar(z_r;y) \phi^0(z) \d z\\
&=\int_{\reals^d} \|U_\bot\nabla\log g(z;y)\|^2  \phi^y(z)\d z=\res(U_r , H),
\end{align*}
where the second inequality above follows from the Poincar\'{e} inequality for $\phi^0$. Plugging the above inequality into \eqref{eq:tmp432}, the first claim follows.

For the second claim, the independence of $z^i_\bot$ yields
\[
\E  \left|\frac{1}{m} \sum_{i=1}^m \frac{g(z_r, z^i_\bot;y)}{\gbar(z_r;y)}-1 \right|\leq \frac1{\sqrt{m}}\sqrt{\E_{z_r\sim \phi^y} \Big[\text{var}_{z_\bot\sim \phi^0}[q(z_\bot|z_r)]\Big]}
\]
By Poincar\'{e} inequality of $\phi^0$, we have
\begin{align*}
\E_{z_r\sim \phi^y} \Big[ \text{var}_{z_\bot\sim \phi^0}[q(z_\bot|z_r)] \Big]&\leq \E_{z_r\sim \phi^y} \Big[ \E_{z_\bot\sim \phi^0}\big[\|\nabla_{z_\bot} q(z_\bot |z_r)\|^2\big] \Big]\\
&=\E_{z_r\sim \phi^y} \Big[ \E_{z_\bot\sim \phi^0} \big[ \|\nabla_{z_\bot} \log g(z; y)\|^2 q(z_\bot|z_r)^2 \big]\Big]\\
&=\frac{1}{\calZ} \int_{\reals^d}\|\nabla_{z_\bot} \log g(z; y)\|^2 q(z_\bot|z_r)g(z;y) \phi^0(z)\d z\\
&=\int_{\reals^d}\|U_\bot \nabla \log g(z;y)\|^2 \frac{g(z;y)}{\gbar(z_r;y)}\phi^y(z)\d z\\
&\leq C \int_{\reals^d}\|U_\bot \nabla \log g(z;y)\|^2 \phi^y(z)\d z=C \res(U_r , H).
\end{align*}
This concludes the proof.
\end{proof}

\subsubsection{Efficiency of delayed acceptance}
\label{sec:daanalysis}

The efficiency of the delayed acceptance method critically relies on the accuracy of the approximate density. We introduce the following proposition to analyze Algorithm \ref{alg:xMCMC}.
\begin{proposition}
\label{prop:delayprop}
For two probability densities $\mu$ and $\nu$. Suppose a Markov chain transition kernel $Q(\cdot|\cdot)$ satisfies the detailed balance condition with respect to $\mu$:
\[
    \mu(x)Q(x'|x) = \mu(x') Q(x|x') .
\]
Then, a Metropolis-Hastings algorithm defined by the proposal $Q(\cdot|\cdot)$ and the acceptance probability 
\[
\alpha(x,x')=1\wedge \frac{\nu(x')\mu(x)}{\mu(x')\nu(x)} 
\]
satisfies the detailed balance condition with respect to $\nu$. Moreover, the rejection rate satisfies
\[
\E \big[ 1-\alpha(X,X') \big]\leq 4\sqrt{2}\Hell(\nu,\mu),
\]
where $X$ is a random sample from $\nu$ and $X'$ is generated randomly from $Q(\cdot|X)$. 
\end{proposition}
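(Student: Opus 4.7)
The proposition has two independent parts: a detailed balance claim and a quantitative rejection-rate bound. I will handle them in that order since the first is a routine symmetrization, while the second requires a short Cauchy--Schwarz argument combined with the Hellinger identity.

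\textbf{Part 1 (detailed balance for $\nu$).} The Metropolis--Hastings transition kernel induced by $Q$ and $\alpha$ is
\[
P(x,\mathrm{d} x')=Q(x'|x)\alpha(x,x')\,\mathrm{d} x' + \Big(1-\int Q(y|x)\alpha(x,y)\,\mathrm{d} y\Big)\delta_x(\mathrm{d} x').
\]
Only the off-diagonal flux needs checking, i.e.\ $\nu(x)Q(x'|x)\alpha(x,x')=\nu(x')Q(x|x')\alpha(x',x)$. I will push $\nu(x)$ inside the minimum defining $\alpha(x,x')$ to rewrite the left-hand side as $Q(x'|x)\min\{\nu(x),\,\nu(x')\mu(x)/\mu(x')\}$, then use the hypothesis $\mu(x)Q(x'|x)=\mu(x')Q(x|x')$ to transform the right-hand side in the same fashion. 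The two expressions will match term-by-term, proving detailed balance with respect to $\nu$.

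\textbf{Part 2 (rejection rate bound).} Writing $w(x)=\nu(x)/\mu(x)$ and using $1-(1\wedge a)=(1-a)_+$, I will convert the expectation into the symmetric flux form
\[
\E[1-\alpha(X,X')] = \iint \big(\nu(x)Q(x'|x)-\nu(x')Q(x|x')\big)_+\,\mathrm{d} x\,\mathrm{d} x',
\]
where I used $\mu(x)Q(x'|x)=\mu(x')Q(x|x')$ to turn the $\mu$-ratio inside $\alpha$ into a swap of conditioning variables. Because the integrand of the signed version is zero, the positive part equals half the absolute value, and a second application of detailed balance yields the clean identity
\[
\E[1-\alpha(X,X')] = \tfrac{1}{2}\iint \mu(x)Q(x'|x)\,|w(x)-w(x')|\,\mathrm{d} x\,\mathrm{d} x'.
\]

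\textbf{Part 2, finishing.} I will factor $|w(x)-w(x')|=|\sqrt{w(x)}-\sqrt{w(x')}|\cdot(\sqrt{w(x)}+\sqrt{w(x')})$ and apply Cauchy--Schwarz against the probability measure $\mu(x)Q(x'|x)\mathrm{d} x\,\mathrm{d} x'$. The ``sum'' factor is controlled by $(\sqrt{w(x)}+\sqrt{w(x')})^2\leq 2(w(x)+w(x'))$, and each of its integrals evaluates to $1$ (directly for the $w(x)$ term, via detailed balance for the $w(x')$ term), giving an upper bound of $2$. The ``difference'' factor is handled by $(\sqrt{w(x)}-\sqrt{w(x')})^2\leq 2\bigl((\sqrt{w(x)}-1)^2+(\sqrt{w(x')}-1)^2\bigr)$, and the key identity
\[
\int \mu(x)\,(\sqrt{w(x)}-1)^2\,\mathrm{d} x \;=\; \int(\sqrt{\nu}-\sqrt{\mu})^2\,\mathrm{d} x \;=\; 2\,\Hell(\nu,\mu)^2
\]
reduces this factor to $2\sqrt{2}\,\Hell(\nu,\mu)$. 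Multiplying the two factors and the prefactor $1/2$ yields the stated bound (with room to spare, since the tighter constant $2\sqrt{2}$ already suffices).

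\textbf{Main obstacle.} The only place requiring care is the simultaneous use of detailed balance for $Q$ with respect to $\mu$ in two different roles: once to symmetrize the flux into the $\tfrac12|w(x)-w(x')|$ form, and again inside the Cauchy--Schwarz bounds to evaluate the two marginal integrals of $w(x')$. Keeping track of which variable is being integrated and which is conditioned on is the most error-prone bookkeeping, but no deeper analytic difficulty arises.
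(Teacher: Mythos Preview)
Your proof is correct. Part~1 coincides with the paper's argument; Part~2 reaches the same conclusion by a genuinely different route.

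For Part~2 the paper does not pass through the flux identity $\tfrac12\iint \mu(x)Q(x'|x)\,|w(x)-w(x')|$. Instead it rewrites the rejection rate as $\iint \mu(x)Q(x'|x)\bigl((1-b(x))\vee(1-b(x'))\bigr)$ with $b=\nu/\mu$, applies the pointwise bound $1-b\le 2|1-\sqrt{b}|$ together with $a\vee b\le a+b$, uses the $\mu$-reversibility of $Q$ to merge the two resulting terms, and finishes with Jensen's inequality to reach $4\int\mu\,|1-\sqrt{b}|\le 4\sqrt{2}\,\Hell(\nu,\mu)$. Your approach---factoring $|w-w'|=|\sqrt{w}-\sqrt{w'}|(\sqrt{w}+\sqrt{w'})$ and applying Cauchy--Schwarz to each factor---is slightly more systematic and, as you observe, yields the sharper constant $2\sqrt{2}$. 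Both proofs ultimately rely on the same two structural ingredients (reversibility of $Q$ under $\mu$ to symmetrize, and the Hellinger identity $\int\mu(\sqrt{b}-1)^2=2\Hell(\nu,\mu)^2$), so the difference is one of packaging rather than substance; the gain is purely in the constant.
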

\begin{proof}
For the detailed balance condition, we simply check that 
\begin{align*}
\nu(x)Q(x'|x)\alpha(x,x')&=\nu(x)\frac{\mu(x')}{\mu(x)}Q(x|x')\left[1\wedge \frac{\nu(x')\mu(x)}{\mu(x')\nu(x)}\right]\\
&=\nu(x')Q(x|x')\left[\frac{\nu(x)\mu(x')}{\mu(x)\nu(x')}\wedge 1\right]\\
&=\nu(x')Q(x|x')\alpha(x',x). 
\end{align*}
Meanwhile, let $b(x)=\frac{\nu(x)}{\mu(x)}$, the rejection rate can be expressed as
\begin{align*}
\E \big[ 1-\alpha(X,X') \big] &=\int_{\reals^d}\int_{\reals^d}\nu(x)Q(x'|x)(1-\alpha(x,x'))\d x \d x'\\
&=\int_{\reals^d}\int_{\reals^d}\mu(x)Q(x'|x)\left(1-\left(\frac{\nu(x)}{\mu(x)}\wedge \frac{\nu(x')}{\mu(x')}\right)\right)\d x \d x'\\
&=\int_{\reals^d} \int_{\reals^d}\mu(x)Q(x'|x)\left((1-b(x))\vee(1-b(x'))\right)\d x \d x' .
\end{align*}
Then note that for any $b\geq 0, 1-b\leq 2-2\sqrt{b}\leq |2-2\sqrt{b}|$, so 
\[
(1-b(x))\vee(1-b(x'))\leq |2-2\sqrt{b(x)}|\vee |2-2\sqrt{b(x')}|\leq |2-2\sqrt{b(x)}|+ |2-2\sqrt{b(x')}|.
\]
Applying the above identity and the detailed balance condition $\mu(x)Q(x'|x) = \mu(x') Q(x|x')$, we have
\begin{align*}
\E \big[ 1-\alpha(X,X')\big]&\leq \int_{\reals^d}\int_{\reals^d}\mu(x)Q(x'|x) |2-2\sqrt{b(x)}| \d x \d x'\\
&\quad +\int_{\reals^d}\int_{\reals^d}\mu(x)Q(x'|x) |2-2\sqrt{b(x')}| \d x \d x'\\
&\text{(Using $\mu(x)Q(x'|x)=\mu(x')Q(x|x')$ and symmetry, these two are the same)}\\
&= 2\int\int_{\reals^d}\mu(x)Q(x'|x) |2-2\sqrt{b(x)}| \d x \d x'\\
&= 4\int_{\reals^d}\mu(x) |1-\sqrt{b(x)}| \d x\\
&\leq 4\sqrt{\int_{\reals^d}\mu(x) |1-\sqrt{b(x)}|^2 \d x}=4 \sqrt{2} \Hell(\mu,\nu)
\end{align*}
where the second step follows from $\int_{\reals^d}Q(x'|x)\d x'=1$ and the last step follows from Jensen's inequality. This concludes the proof.
\end{proof}

Proposition \ref{prop:delayprop} provides a rigorous justification for Algorithm \ref{alg:xMCMC}. In particular, we have the following result. Since $\Hell(\pi^y,\pihat^y )$ can be bounded using Proposition \ref{prop:errT}, if $\That$ is a close approximation of $T$, then delay acceptance tends to be efficient. 

\begin{proposition}
Algorithm \ref{alg:xMCMC} satisfies the detailed balance condition with respect to the original posterior $\pi^y$. Moreover, the acceptance rate can be bounded from below by  
\[
\E \big[ \alpha_2(X,X') \big]\geq 1-4\sqrt{2}\Hell(\pi^y,\pihat^y ) .
\] 
\end{proposition}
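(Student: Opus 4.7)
The plan is to view Algorithm~\ref{alg:xMCMC} as a two-stage Metropolis--Hastings chain in which the first stage acts as a proposal kernel and the second stage is the Metropolis correction studied in Proposition~\ref{prop:delayprop}. Concretely, I take $\mu = \pihat^y$ and $\nu = \pi^y$, and I let $Q(\cdot|\cdot)$ denote the transition kernel on $\R^d$ defined by one call of \texttt{propose} followed by the first-stage accept/reject test with probability $\alpha_1$ (where $x$ is kept if rejected). The target of Proposition~\ref{prop:delayprop} is then to turn $Q$, which is $\pihat^y$-reversible, into a chain that is $\pi^y$-reversible.

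The first step is to verify that $Q$ satisfies detailed balance with respect to $\pihat^y$. This is precisely the statement that Algorithm~\ref{alg:pMCMC}, when instantiated with the approximate transformation $\That$, is invariant with respect to the corresponding reference posterior $\phihat^y$ (hence, after pushforward through $\That$, with respect to $\pihat^y$). Since the pseudo-marginal construction of Algorithm~\ref{alg:pMCMC} was shown in Section~\ref{sec:MCMC} to leave the auxiliary density $\phi^{y,m}$ invariant and its marginal is $\phi^y(z_r)$, the same argument applied verbatim with $T \leftarrow \That$, $f \leftarrow f$, $\pi^0 \leftarrow \pihat^0$ yields a kernel that is reversible with respect to $\pihat^y$.

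The second step is to recognise the second-stage acceptance probability in line~9 of Algorithm~\ref{alg:xMCMC} as exactly the ratio required by Proposition~\ref{prop:delayprop}. Using the definitions $\pi^y(x) \propto f(x;y)\pi^0(x)$ and $\pihat^y(x) \propto f(x;y)\det(\nabla\That(\That^{-1}(x)))\phi^0(\That^{-1}(x))$, the likelihood factors cancel and a direct substitution gives
\[
1\wedge \frac{\pi^y(x')\,\pihat^y(x)}{\pihat^y(x')\,\pi^y(x)}
= 1\wedge \frac{\pi^0(x')\,\det(\nabla\That(z))\,\phi^0(z)}{\pi^0(x)\,\det(\nabla\That(z'))\,\phi^0(z')}
= \alpha_2(x,x'),
\]
with $z = \That^{-1}(x)$ and $z' = \That^{-1}(x')$, so no unknown normalising constant appears.

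Having cast the algorithm in the framework of Proposition~\ref{prop:delayprop}, both conclusions follow immediately: $\pi^y$-reversibility of the composite kernel, and the bound $\E[1-\alpha_2(X,X')] \leq 4\sqrt{2}\,\Hell(\pihat^y,\pi^y)$, which rearranges to $\E[\alpha_2(X,X')] \geq 1 - 4\sqrt{2}\,\Hell(\pi^y,\pihat^y)$. The only real obstacle is the bookkeeping in step one: one has to make explicit that $Q$, although defined on the extended state space $(x,R)$ in Algorithm~\ref{alg:xMCMC}, admits a marginal on $x$ that is reversible with respect to $\pihat^y$, so that Proposition~\ref{prop:delayprop} (stated on $\R^d$) applies cleanly. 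This is handled by the same marginalisation identity used to derive \eqref{eq:aux2}, applied with $\That$ in place of $T$.
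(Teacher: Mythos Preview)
Your proposal is correct and follows essentially the same route as the paper: identify the first stage (lines 2--4) as a kernel $Q$ reversible with respect to $\pihat^y$, recognise $\alpha_2$ as the Metropolis ratio $1\wedge \frac{\pi^y(x')\pihat^y(x)}{\pihat^y(x')\pi^y(x)}$ after cancelling the common likelihood factor, and then invoke Proposition~\ref{prop:delayprop} for both detailed balance and the Hellinger bound. The paper's proof is terser but structurally identical; your explicit remark about the extended state $(x,R)$ and the need to marginalise is a point the paper glosses over, so if anything you are being more careful rather than different.
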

\begin{proof}
We first note that Lines 2--4 of Algorithm \ref{alg:xMCMC} defines an MCMC transition kernel $Q(\cdot|\cdot)$ that satisfies the detailed balance condition with respect to the approximate posterior $\pihat^y$. Recall \eqref{eq:da_approx}, the ratio between the approximate posterior and the original posterior is given by 
\[
\frac{\pihat^y(x)}{\pi^y(x)} \propto \frac{\pihat^0(x)}{\pi^0(x)},
\]
where $\pihat^0(x) = \mathrm{det}(\nabla \That(\That^{-1}(x)) )\phi^0(\That^{-1}(x))$. Let $x=\That(z)$, we have
\[
\frac{\pihat^y(x)}{\pi^y(x)} \propto \frac{\mathrm{det}(\nabla \That(z) )\phi^0(z)}{\pi^0(x)}.
\]
Then applying Proposition \ref{prop:delayprop}, we can conclude that Algorithm \ref{alg:xMCMC} satisfies the detailed balance condition with respect to the original posterior $\pi^y$. The second claim directly follows from Proposition \ref{prop:delayprop}.

\end{proof}

\section{Numerical experiments}
\label{sec:numerics}
In this section, we will demonstrate our methods through two numerical examples. In the first example, we demonstrate the sampling performance and dimension scalability using a one-dimensional elliptic inverse problem. In the second example, apply our methods to a more complicated linear elasticity problem on an irregular domain.

\subsection{Elliptic inverse problem}
\label{sec:elliptic}
\subsubsection{Problem setup}
The first example is a one-dimensional PDE-constrained inverse problem with Gaussian measurement noise. We adopt here a similar setup as in \cite{bardsley2021optimization}: we aim to estimate the diffusion field $ s\mapsto\kappa(s) >0$  from measurements of the potential function $s\mapsto u(s)$ which solves the Poisson equation
\begin{equation}
\label{HeatIP}
-\frac{\partial}{\partial s}\Big( \kappa(s) \frac{\partial  u}{\partial s}(s)\Big)=f(s),\quad s\in \Omega := (0,1), 
\end{equation}
with boundary conditions $u(0)=u(1)=0$. 
We consider two different right-hand side functions $f_1 ,f_2$ that are scaled Dirac delta functions
\begin{equation*}
f_1(s)=1000\cdot\delta(s-1/3) \quad{\rm and}\quad f_2(s)=1000\cdot\delta(s-2/3).
\end{equation*}
Denoting by $u_1,u_2$ the solutions associated with $f=f_1 $ and $f=f_2$ respectively,  we generate the data set corresponding to the measurements of $u_1$ and $u_2$ at $31$ equally spaced discrete locations in $(0,1)$. The resulting data $y\in\R^{62}$ are given by
$$
 y = (u_1(s_1^\text{obs}),\hdots u_1(s_{31}^\text{obs}),u_2(s_1^\text{obs}),\hdots u_2(s_{31}^\text{obs})) + \varepsilon ,
$$
where $\varepsilon\sim\mathcal{N}(0,\sigma^2 I)$ with $\sigma^2$ corresponding to a signal-to-noise ratio of 10\%. 
The ``true'' diffusion coefficient $\kappa_{\rm true}$ used to generate the data is 
\begin{equation*}
\kappa_{\rm true}(s)= \left\{ \begin{array}{ll} 5, & s \in [0,0.2) \\ 1, & s \in [0.2,0.5) \\ 3, & s \in [0.5,0.75) \\ 5, & s \in [0.75, 1]. \end{array} \right. 
\end{equation*}

\subsubsection{Discretization} We use the finite element method with $d=2^\ell$ uniform elements to discretize the equation. $\kappa(s)$ is estimated by a piecewise-constant field with value $\kappa_i$ at the $i$-th element and $u(s)$ is estimated by a continuous piecewise-linear function with value $u_i$ at the $i$-th node ($d+1$ nodes total). Thus, \eqref{HeatIP} can be reformulated as finding $u\in\R^{d-1}$ such that
$
B(x) u = f,
$
where $f = (f(s_1) ,\hdots, f(s_{d-1}))$ and
$$
 B(x) = \begin{pmatrix} \kappa_{1}+\kappa_{2} & -\kappa_{2} & & 0 \\ -\kappa_{2} & \ddots & \ddots & \\ &\ddots&\ddots& -\kappa_{d-1}\\0 &&-\kappa_{d-1}& \kappa_{d-1}+\kappa_{d}\end{pmatrix},
$$
with $\kappa_i =\log(\exp(x_i) + 1)$. In the end, the discretized forward model is
$$
 G(x) = \begin{pmatrix} C B(x)^{-1} f_1 \\ C B(x)^{-1} f_2 \end{pmatrix} ,
$$
where $C\in\R^{31\times (d-1)}$ is an observation matrix which extract the evaluations of $u$ at locations $s_{i}^\text{obs}$.
In Figure \ref{fig:ellptic_setup}, the ``true'' diffusion coefficient (generated with $\ell=10$) is shown in the top left plot and the corresponding data vectors together with the noise-free data $C\,B(x)^{-1}f_1$ and $C\,B(x)^{-1}f_2$ are shown in the top right plot. 

\subsubsection{Prior} We now describe prior models we put on the diffusion coefficient. In order to ensure the positivity of the diffusion field, we parametrize $\kappa(s)$ as
\[
\kappa(s) = \log(\exp(z(s)) + 1) ,
\]
where $s\mapsto z(s)$ is a random field. Then we consider two prior distributions for the random field $z(s)$. Firstly, we consider a Besov-type prior for $x$, meaning that $z = \sum_{i\geq1} X_i \psi_i $ where $(\psi_1,\psi_2,\hdots)$ is the Haar wavelet basis in $L^2((0,1))$ and where $X_1,X_2,\hdots$ are independent random variables following a exponential power distribution with $p\in\{0.5;1;2\}$, see Example \ref{ex:Exponential}.

Secondly, we consider a first-order difference Cauchy prior \cite{chada2021cauchy}. Following the piecewise constant discretization of the random fields $\kappa(s)$, and hence $z(s)$, we assign prior density to the difference between two adjacent elements of the discretized vector $z = (z_1, z_2, \ldots z_d)^T$. This leads to the unnormalized prior density on the vector $z$
\[
\pi^0(z) \propto \pi^0\left(z_1\right) \prod_{j=2}^d \pi^0\left(\frac{z_j - z_{j-1}}{h}\right),
\]
where $h = d^{-1}$ is the size of each local element. Equivalently, we can define a linear transformation $x = D z$, where $D\in \R^{d\times d}$ is a first order difference matrix
$$
 D = \begin{pmatrix} 1 & 0 & & & \\ -1/h & 1/h & 0 & & & \\  & -1/h & 1/h &\ddots  &  \\ & &\ddots & \ddots & 0 \\ & & & -1/h& 1/h\end{pmatrix},
$$
and then elements of the random vector $X = D^{-1}Z$ are independent random variables following the Cauchy distribution (cf. Example \ref{ex:Cauchy}). 

\begin{figure}[h!]
\includegraphics[trim = 2em 0em 2em 0em , width = 0.45\textwidth, height = 0.27\textwidth]{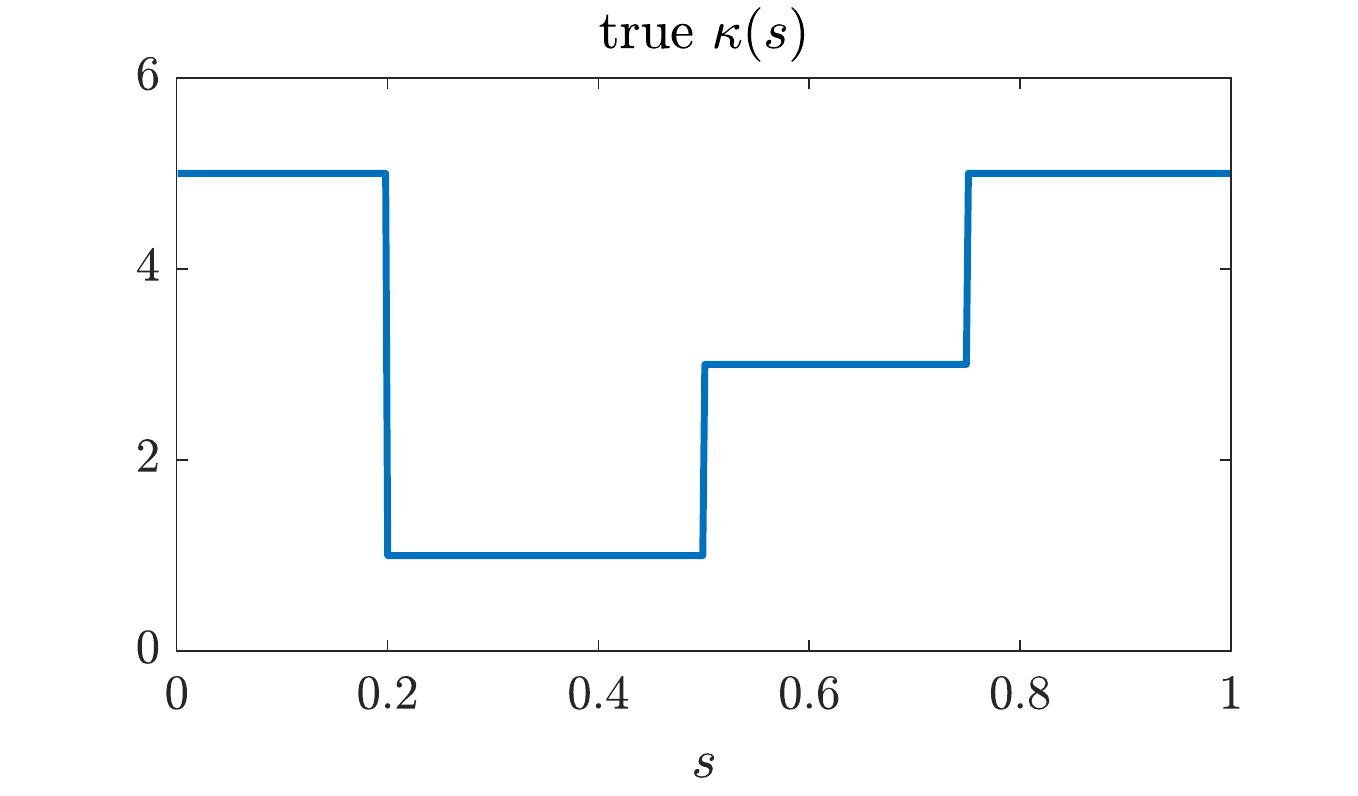}\hfill
\includegraphics[trim = 2em 0em 2em 0em , width = 0.45\textwidth, height = 0.27\textwidth]{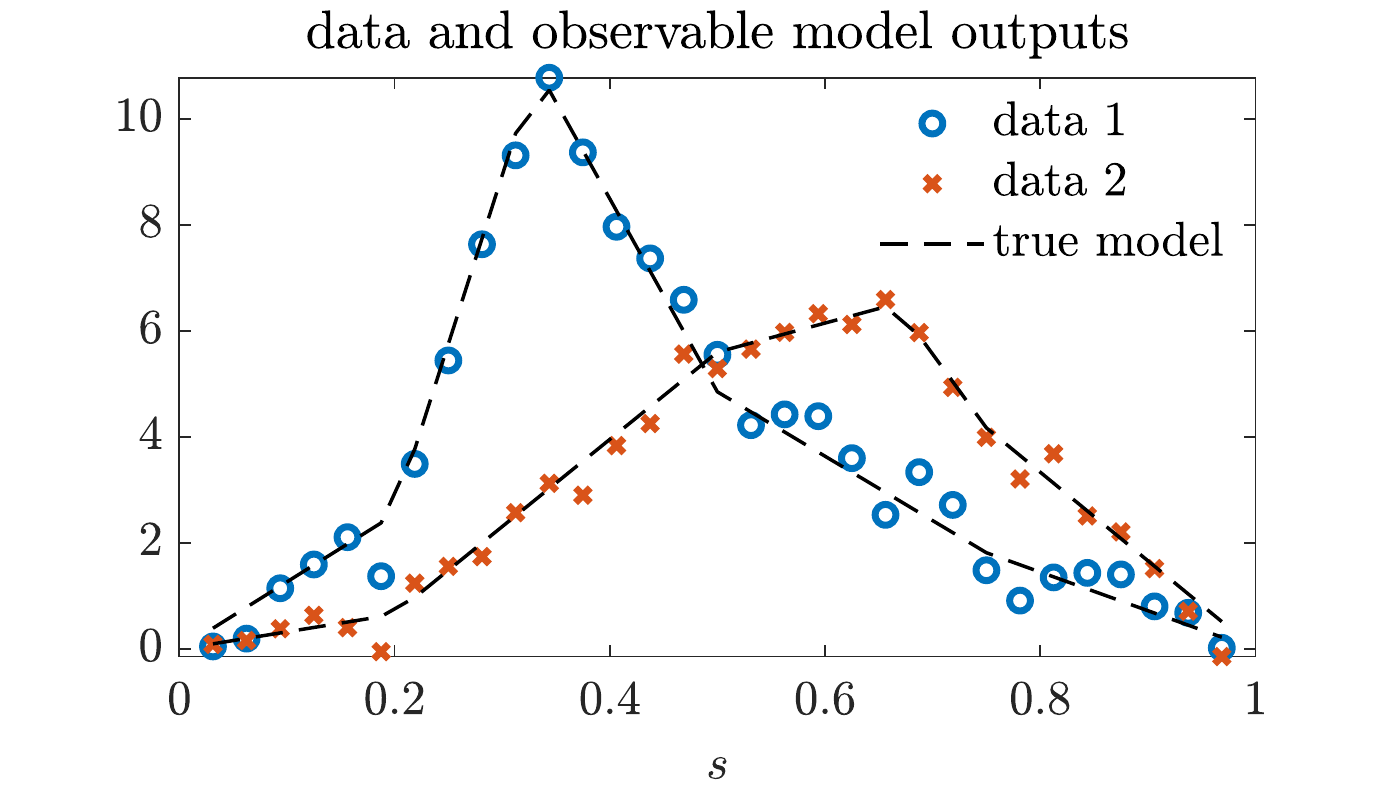}
\caption{Left: the ``true'' diffusion coefficient used for generating the synthetic data. Right: the measured data for the elliptic PDE problem. }\label{fig:ellptic_setup}
\end{figure}

\begin{figure}[h!]
\includegraphics[trim = 2em 0em 2em 0em , width = 0.45\textwidth, height = 0.27\textwidth]{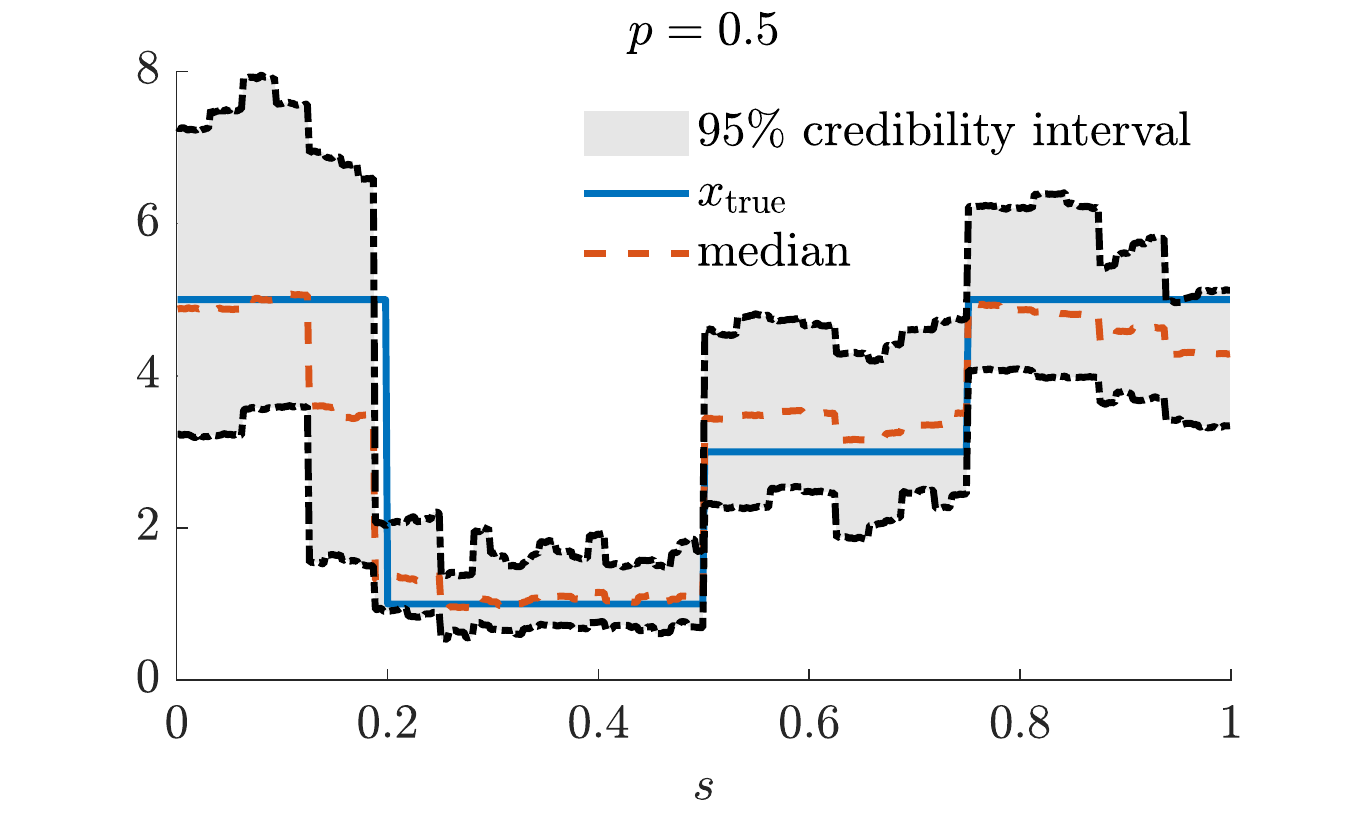}\hfill
\includegraphics[trim = 2em 0em 2em 0em , width = 0.45\textwidth, height = 0.27\textwidth]{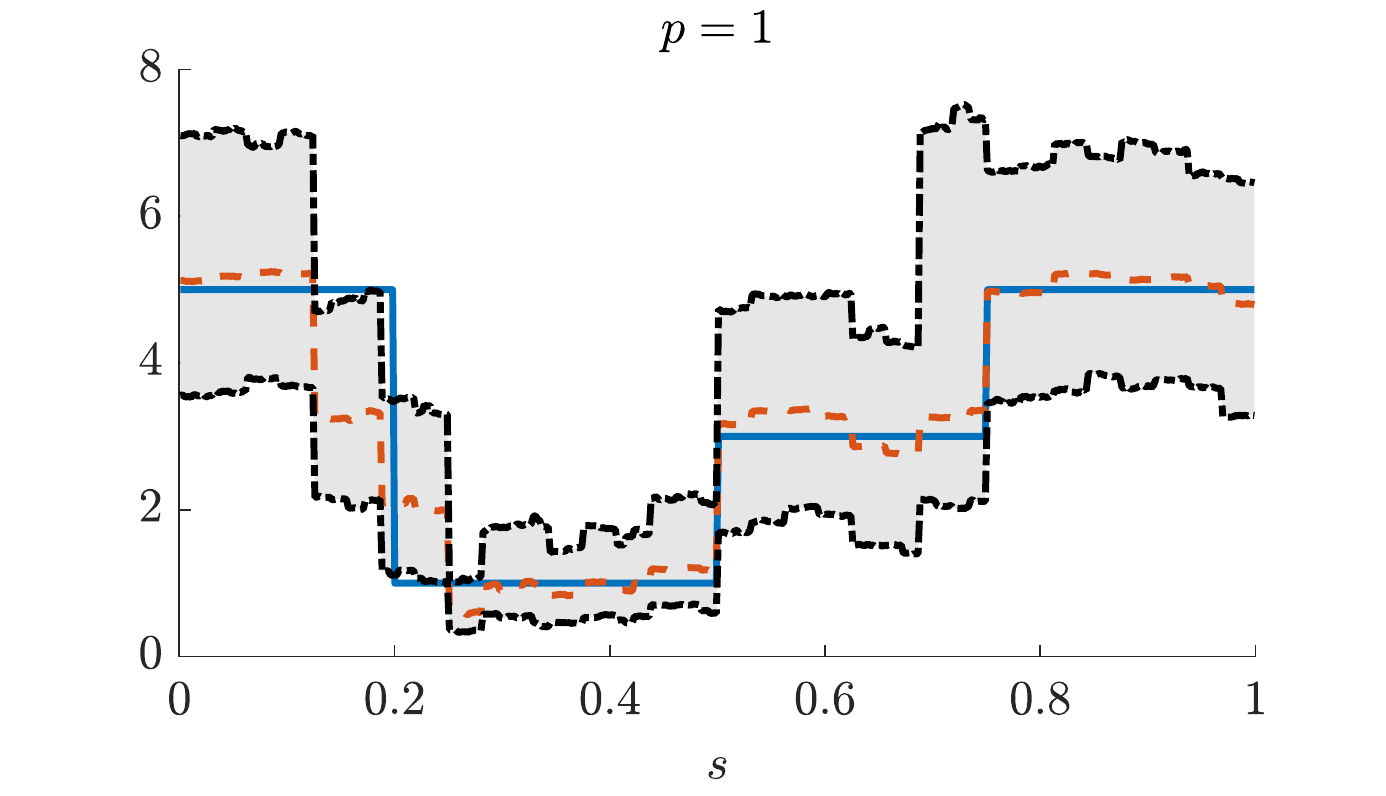}\\
\includegraphics[trim = 2em 0em 2em 0em , width = 0.45\textwidth, height = 0.27\textwidth]{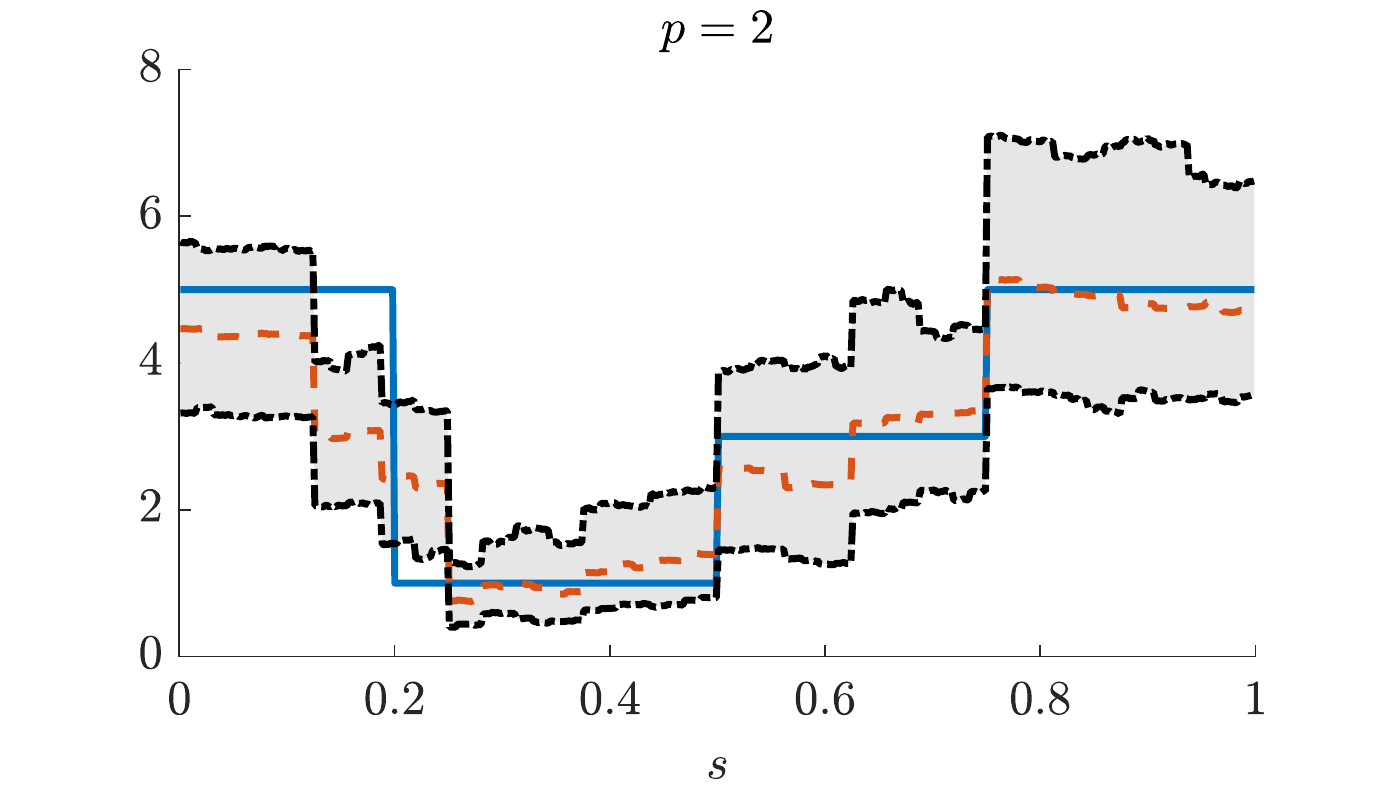}
\caption{Inversion results using the Besov-type prior. Starting from the left: medians and $95\%$ credibility intervals of the reconstructed diffusion coefficients using the exponential power distribution with $p = 0.5$, $p = 1$ (Laplace), and $p = 2$ (Gaussian), respectively.}\label{fig:ellptic_haar}
\end{figure}

\subsubsection{Numerical results of Besov-type priors}
In Figure \ref{fig:ellptic_setup}, we first show on the left  the true diffusion coefficient $\kappa$ to be recovered,  and the data obtained through two forcing profiles on the right. We have implemented MALA, pCN and Hamiltonian MCMC algorithms under the framework of Algorithm \ref{alg:pMCMC} with different subspace dimensions. In this example, we find that various MCMC algorithms only differ in efficiency, but not in the inversion results. So we will discuss the sampling efficiency next and only present the results obtained with MALA for different priors. The inversion results obtained by the Besov-type priors with power indices $p = \{0.5, 1, 2\}$ are shown in Figure \ref{fig:ellptic_haar}, and the inversion results obtained by the first-order difference priors are shown in Figure \ref{fig:ellptic_diff}.
For the Besov-type priors, we can see that with all three choices of $p$, the median estimators can recover largely the profile of true $\kappa$. However, we note that when using a Gaussian $(p=2)$ prior, the $95\%$ CI does not cover the true $\kappa(s)$ near $s=0.15$ and $\kappa=0.22$. Moreover, the CI cannot find  the ``change points" of  $\kappa(s)$ that accurately, missing the one near $s=0.2$. In comparison, using $p=1$ and $p=0.5$ yields CIs that cover the true $\kappa$. The CIs also capture the changes more accurately. This demonstrates the importance of using heavy-tailed priors.

\paragraph{Sampling performance}
We compare the performance of different subspace MCMC methods---MALA, pCN and Hamiltonian (with the number of integration steps automatically tuned by NUTS)---with their full-dimensional counterparts. 
For Besov-type priors used in this example (cf. Section 5.1.3), the posterior distribution equips with heavier prior tails, which corresponds to a lower value $p$ in the exponential power distribution, is more challenging to sample from. Thus, we use the most difficult case, $p = 0.5$, in our performance benchmarks.
For each of the subspace MCMC algorithms, we provide performance estimates using different subspace dimensions ($r = \{24, 32, 40\}$) and pseudo-marginal sample sizes ($m = \{2, 5\}$). With LIS dimensions $r = \{24, 32, 40\}$, the estimated trace residuals are $\res(U_r , \Hhat) = \{1.3, 0.61, 0.34\}$, respectively. For the subspace versions of pCN and MALA, we precondition the subspace proposals using the empirical marginal posterior covariance matrix adaptively estimated from past samples \cite{haario2001adaptive}, which is feasible for the rather low-dimensional LIS used here. For all of pCN, MALA and NUTS applied to the full posterior, we sample the posterior transformed to reference coordinate instead of directly sampling the original posterior. 

To measure the sampling efficiency, we use the average integrated autocorrelation times (IACTs) \cite[Chapter 11]{Owen} of parameters 
\[
\tau = \frac1d \sum_{i =1}^d \mathrm{IACT}(x_i),
\]
where $\mathrm{IACT}(x_i)$ is the IACT of the $i$th component of $x$. Roughly speaking, IACT can be seen as the number of steps needed for the algorithm to generate an uncorrelated new sample. So a smaller IACT indicates an algorithm having better sampling efficiency. Different MCMC algorithms used here require different computational costs per iteration. For pCN and MALA, each MCMC iteration only requires one (Monte Carlo) posterior density evaluation. For NUTS, each iteration requires a varying number of Hamiltonian steps so that the resulting end state of the Hamiltonian dynamics is sufficiently decorrelated from the starting point. For this reason, each iteration of NUTS is significantly more costly (about two orders of magnitude) than pCN and MALA. So we use the IACT in terms of the Hamiltonian steps taken as a performance indicator of NUTS. We note that the computational effort  needed by each Monte Carlo estimation of the marginal posterior in Algorithm \ref{alg:pMCMC} is about $m-1$ times more than that of the full posterior evaluation. However, we do not consider this factor in the reported IACTs, as the computation needed by the Monte Carlo estimation of the marginal posterior can be embarrassingly parallelized.

\begin{table}
    \centering
    \caption{Elliptic example with the Besov-type prior. Average IACTs of parameters computed by various implementations samplers. For pCN and MALA, the IACTs are measured in terms of MCMC iterations. For NUTS, the IACTs are given in terms of the number of Hamiltonian steps (which is close to the wall-clock time performance). All the data reported here are in the form of mean$\pm$standard derivation. Here the symbol ``$-$'' denotes Markov chains having unstable mixing behaviour in which the relative standard deviation of IACT exceeds $1$. Here $m$ is the pseudo-marginal sample size.}
    \label{table:iact_elliptic}
    \begin{tabular}{ll|cc|c}
    \hline
    && \multicolumn{2}{c|}{pseudo-marginal} & \multirow{2}{*}{full} \\ \cline{3-4}
    && $m=2$  & $m=5$ & \\ \hline
    \multirow{3}{*}{MALA} & $r=24$ & $13.2\!\pm\!1.1$ & $12.4\!\pm\!0.93$ & \multirow{3}{*}{$7930\!\pm\!270$} \\
    & $r=32$ & $17.2\!\pm\!3.0$ & $12.5\!\pm\!0.35$ & \\
    & $r=40$ & $17.6\!\pm\!1.3$ & $15.7\!\pm\!1.9$  & \\\hline
    \multirow{3}{*}{pCN}& $r=24$ & $14.8\!\pm\!0.54$ & $14.0\!\pm\!1.5$ & \multirow{3}{*}{$8820\!\pm\!220$} \\
    & $r=32$ & $17.1\!\pm\!0.96$ & $18.0\!\pm\!3.4$ & \\
    & $r=40$ & $18.4\!\pm\!0.55$ & $18.6\!\pm\!2.6$ & \\\hline
    \multirow{3}{*}{NUTS} & $r=24$ & $-$ & $14.5\!\pm\!1.6$  & \multirow{3}{*}{$142\!\pm\!5.5$} \\
    & $r=32$ & $13.3\!\pm\!1.2$  & $11.2\!\pm\!0.42$ & \\
    & $r=40$ & $12.5\!\pm\!0.73$ & $10.5\!\pm\!0.39$ & \\\hline
    \end{tabular}
\end{table}

We present the IACTs in Table \ref{table:iact_elliptic}. In this example, even a moderate LIS dimension can significantly improve the sampling efficiency. In particular, the IACT for vanilla MALA is 7930. Using a LIS dimension 24, the subspace counterpart with a pseudo-marginal sample size $2$ is able to reduce the IACT to $13$. A similar speedup is also observed for pCN. For both MALA and pCN, the pseudo-marginal sample size does not make a significant impact on the sampling performance. The reason can be that the residual $\res(U_r , \Hhat)$ is small for the LIS dimension used here. 
Some readers may consider that the poor performance of the vanilla MALA and pCN (directly applied to the full posterior) is a known fact in the literature. To compare with the state-of-the-art algorithms, we also implemented the Hessian-preconditioned MALA (H-MALA) of \cite{petra2014computational} for this example. H-MALA produces an estimated IACT of $68.9\pm7.0$, which is still considerably less efficient than subspace methods. 
The subspace implementation is also able to accelerate NUTS. However, we note that using a pseudo-marginal sample size $2$ with a LIS dimension $24$ does not lead to a reliable estimate of the IACT. The reason can be that the Hamiltonian MCMC is more sensitive to the variance of the Monte Carlo estimate of the marginal posterior. With either increasing LIS dimension or increasing pseudo-marginal sample size, we are able to stably obtain accelerated sampling performance. 

\paragraph{Dimension scalability}
We also refine the model discretization by setting $d = 2^\ell$ with $\ell \in\{9, 11, 13\}$ to demonstrate the dimension scalability of our subspace methods. In Figure \ref{fig:H_haar}, we plot the spectra of the estimated $H$ matrices with different $\ell$. The $H$ matrices are estimated using $10^4$ posterior samples to avoid random fluctuations. To focus on the dominating part of the spectra, we plot the first $2^9$ eigenvalues. In this figure, we observe that with increasing discretization dimensions, the spectra of the $H$ matrices are similar. This suggests that the LIS dimension is invariant with respect to model refinement in this example. Then, we simulate the subspace MALA methods for difference refinement factors $\ell \in\{9, 11, 13\}$. Here we keep the LIS dimension to 40 and use a pseudo-marginal sample size $m=5$. The estimated IACTs are $16.5\!\pm\!2.1$,  $15.8\!\pm\!2.4$ and $15.6\!\pm\!1.8$ for $\ell = 9, 11, 13$, respectively. Again, for each of the priors, our proposed method shows dimension invariant sampling performance.

\begin{figure}[h!]
\centering
\includegraphics[trim = 2em 0em 2em 0em , width = 0.45\textwidth, height = 0.27\textwidth]{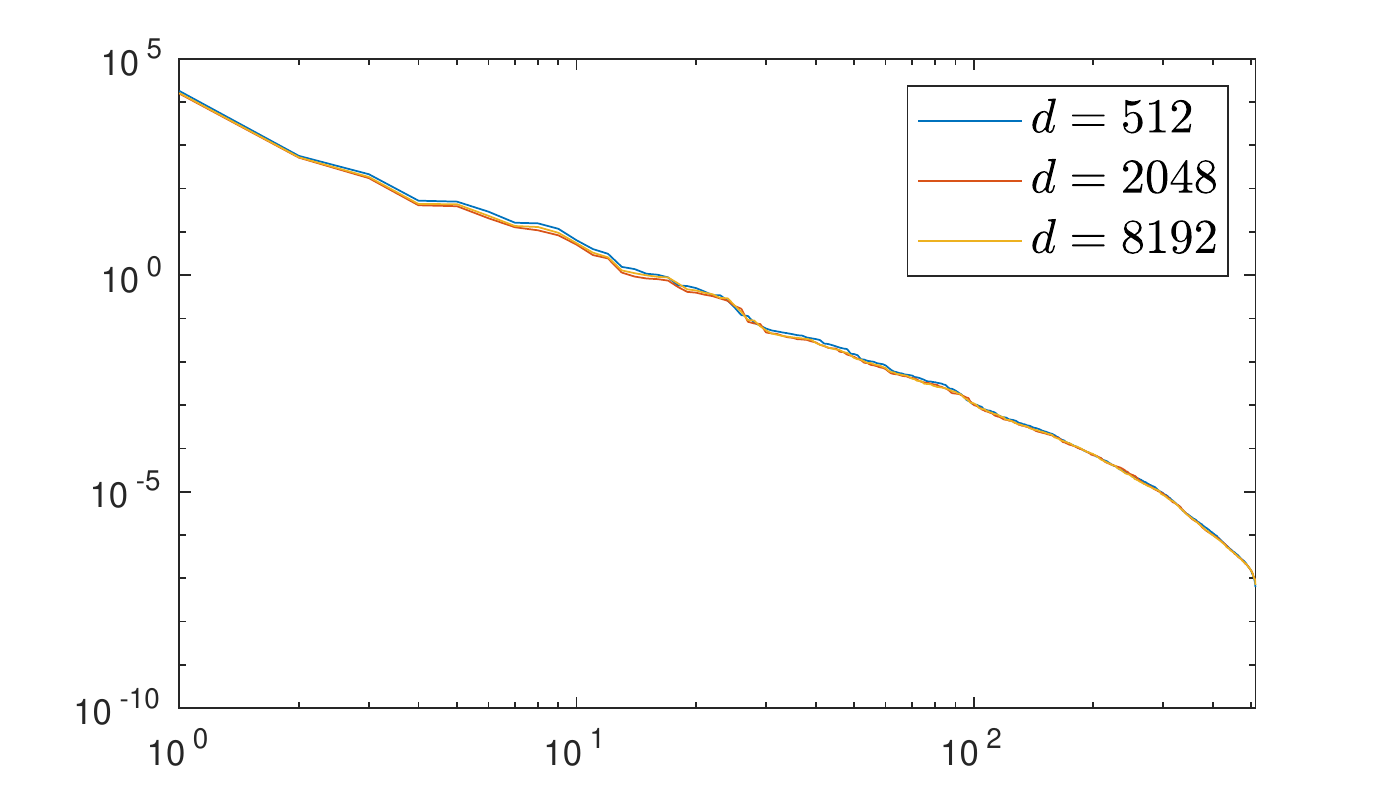}
\caption{Spectra of the estimated $H$ matrices using the Besov-type prior.}\label{fig:H_haar}
\end{figure}

\paragraph{Impact of prior normalization} As a final remark, we also provide sample histories of Markov chains generated by subspace MALA, vanilla MALA, and the vanilla MALA without prior normalization in Figure  \ref{fig:ellptic_MCMC}. Here we observe that the prior normalization is not only the enabling tool to build the highly efficient subspace MCMC, but also able to improve the mixing of vanilla MALA in this example. When NUTS is applied to sample the original posterior without prior normalization, the resulting Markov chain gets stuck in the initial state, and hence the result is not reported in this comparison. 

\begin{figure}[h!]
    \includegraphics[trim = 2em 0em 2em 0em , width = 0.45\textwidth, height = 0.27\textwidth]{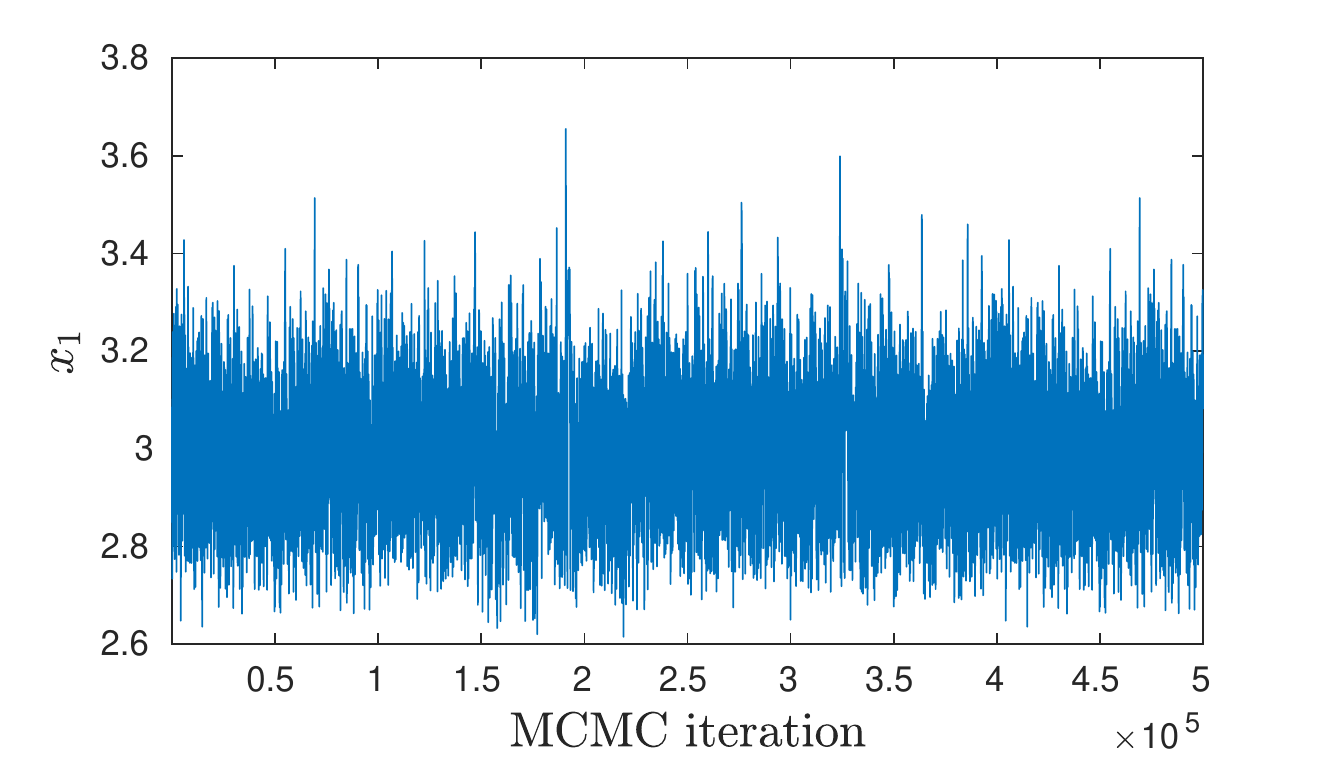}\hfill
    \includegraphics[trim = 2em 0em 2em 0em , width = 0.45\textwidth, height = 0.27\textwidth]{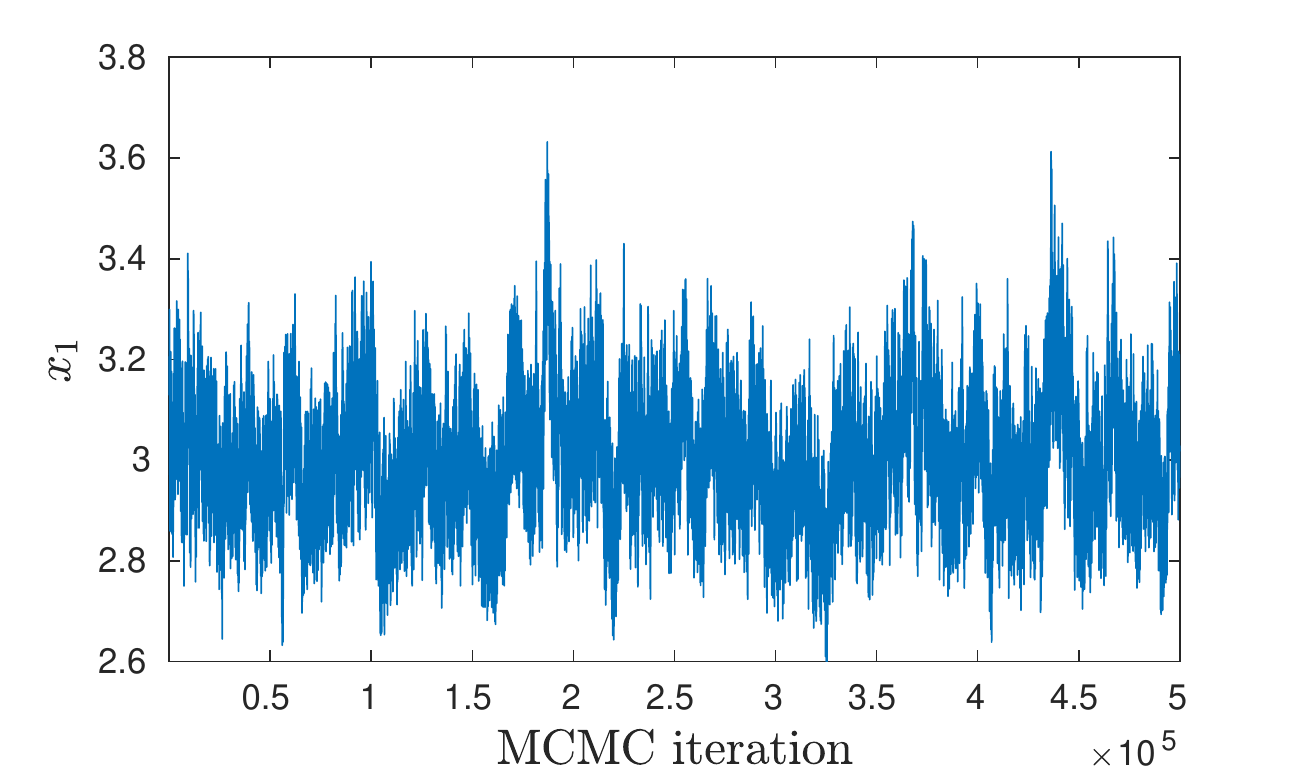}\\
    \includegraphics[trim = 2em 0em 2em 0em , width = 0.45\textwidth, height = 0.27\textwidth]{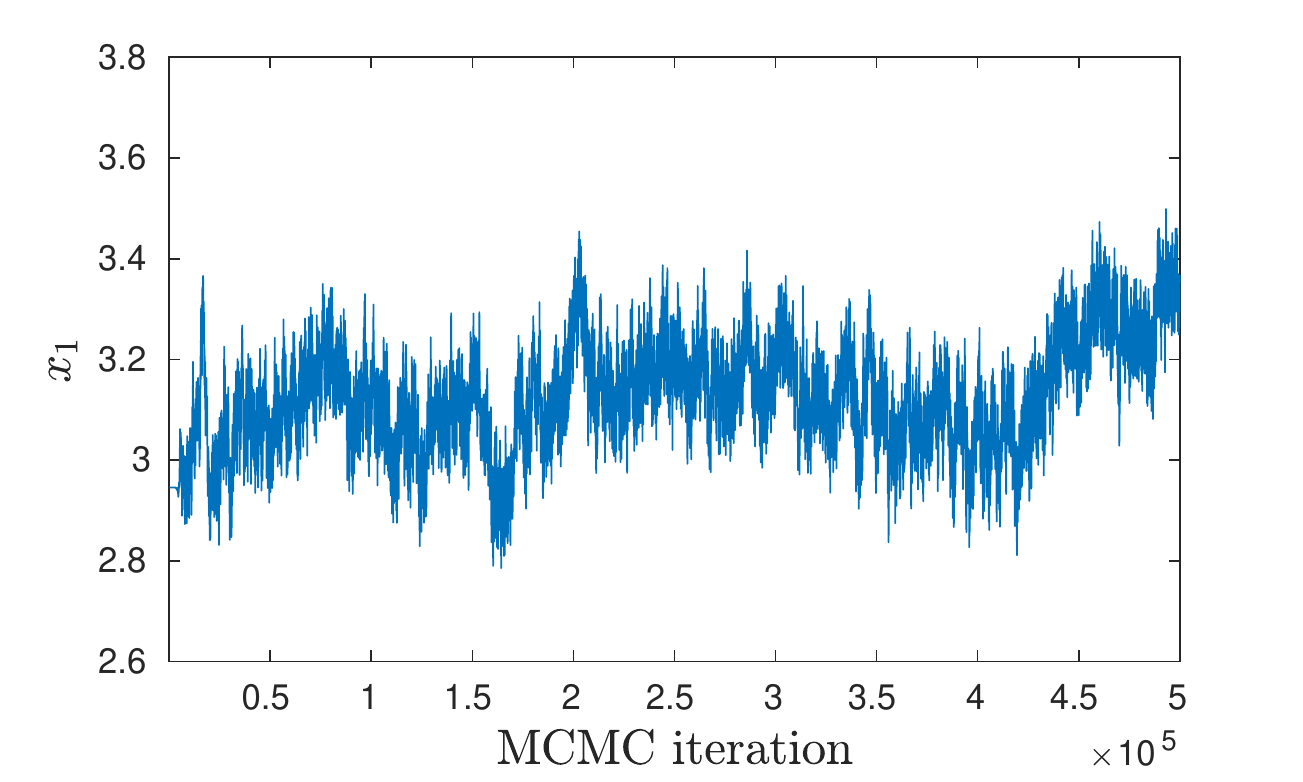}
    \caption{History of Markov chains of $x_1$ generated by subspace MALA with prior normalization (top left), vanilla MALA with prior normalization (top right) and vanilla MALA without prior normalization (bottom left). }\label{fig:ellptic_MCMC}
\end{figure}

\subsubsection{Numerical results of first-order difference Cauchy priors}

\begin{figure}[h!]
\centering
\includegraphics[trim = 2em 0em 2em 0em , width = 0.45\textwidth, height = 0.27\textwidth]{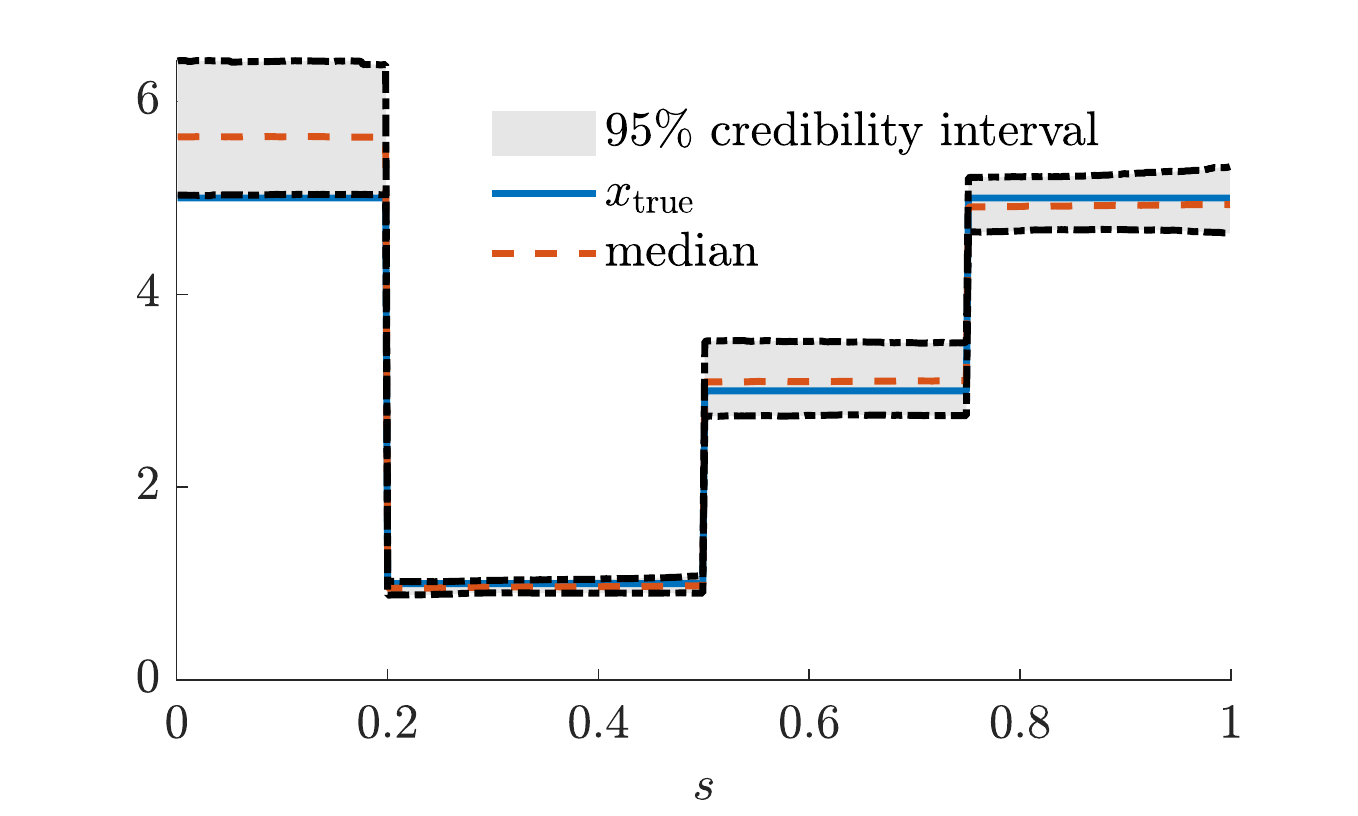}\hfill
\includegraphics[trim = 2em 0em 2em 0em , width = 0.45\textwidth, height = 0.27\textwidth]{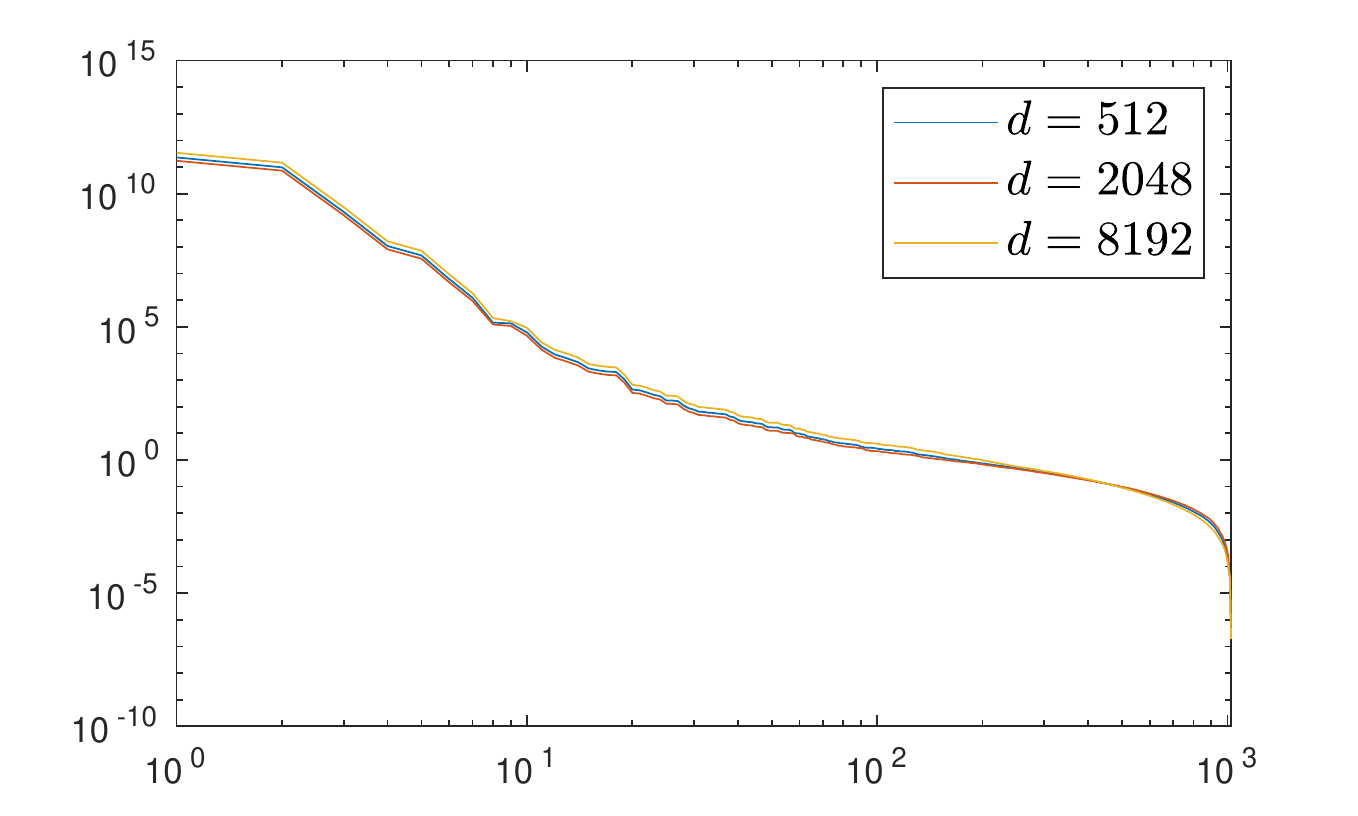}
\caption{Left: inversion results using the first-order difference Cauchy prior with $\ell = 9$ ($d = 512$): medians and $95\%$ credibility intervals of the reconstructed diffusion coefficients. Spectra of the estimated $H$ matrices with different dimensions. }\label{fig:ellptic_diff}
\end{figure}

Here we discuss the numerical results obtained using the Cauchy prior. Because the Cauchy prior generates local discontinuities with extreme values, the PDE solver may not have well-defined solutions. As a consequence, the condition number of the discretized model can be numerically infinite.
To handle this situation, we assign zero likelihood values to those numerically infeasible realizations of random fields.
This truncation may create boundaries with discontinuous posterior density in the parameter space, and thus make the posterior density challenging to sample from. 
As a result, in this example, MCMC simulations using all the abovementioned families of samplers cannot generate rapidly convergence Markov chains. 
To illustrate this, the full-space MALA and the subspace MALA produce Markov chains with estimated IACTs $4.9 \times 10^4 \pm 5.4 \times 10^2$ and $1.1 \times 10^3 \pm 0.7 \times 10^2$, respectively (here we used  $\ell = 9$). 

In Figure \ref{fig:ellptic_diff}, we observed that the Cauchy prior overall produce smaller uncertainty intervals. However, the Cauchy prior provides a better estimation of the location of the discontinuity compared to the Besov-type prior previously shown. 
We also note that the spectra of the estimated $H$ matrix are similar across different dimension settings. This suggests that the LIS dimension in this example should be invariant with respect to model discretization.

\subsection{Linear elasticity analysis of a wrench} 

In the second experiment, we consider a two-dimensional linear elasticity problem \cite{lam2020multifidelity,smetana2020randomized,baptista2022gradient} that models the displacement field $u:\mathcal{D} \rightarrow \mathbb{R}^{2}$ using the PDE 
\[
\nabla \cdot \big(K(s):\varepsilon(u(s))\big) = f(s), \quad s \in \mathcal{D} \subset \mathbb{R}^{2}.
\]
This equation is used to model the stress equilibrium in a physical body $\mathcal{D}$ subject to external forces. The physical body $\mathcal{D}$ is a wrench shown in Figure~\ref{fig:wrench}. Here, $\varepsilon(u) = \frac{1}{2}(\nabla u + \nabla u^{\top})$ is the strain tensor, and $s\mapsto K(s)$ is the Hooke tensor such that
\[
K(s):\varepsilon(u(s)) = \frac{E(s)}{1+\nu} \varepsilon(u(s)) + \frac{\nu E(s)}{1-\nu^2} \mathrm{trace}\big(\varepsilon(u(s))\big)\begin{pmatrix} 1&0 \\0&1\end{pmatrix}
,\]
where $\nu=0.3$ is Poisson's ratio and $s \mapsto E(s)$ is spatially varying Young's modulus such that $E(s) > 0$ for $\forall s$.
In this example, we aim to estimate Young's modulus, which is modeled by a real-valued random field $s\mapsto x(s)$ and the exponential function
\[
E(s) = \exp(x(s)).
\]
Here the random field $x(s)$ is represented as a linear combination of the eigenfunctions of the kernel $C(s,s') = \exp(-\|s-s'\|_2^2)$ on $\mathcal{D}\times \mathcal{D}$ as follow:
\[
x^h(s) = \sum_{i = 1}^d \psi_i^h(s) x_i,
\]
where $d=925$ is the number of elements in the mesh, $\{\psi_1^h, \ldots, \psi_d^h\}$ are the piecewise constant approximations to the eigenfunctions of $C(s,s')$, and the vector $x=(x_1,\hdots,x_d)$ is the unknown random coefficient to be estimated. Here, we prescribe a Laplace prior to the coefficients $x_1,\hdots,x_d$.
We compute the numerical solution $u^{h}=u^{h}(x^h)$ by Galerkin projection onto the space of continuous piecewise affine functions over a triangular mesh, see \cite{zienkiewicz2000finite}.
The domain $\mathcal{D}$, the mesh, the boundary conditions, and a sample von Mises stress of the solution are shown in Figure~\ref{fig:wrench}. We observe the vertical displacements $u^h_2$ at $26$ points of interest located along the green line where the force $f$ is applied, see Figure~\ref{fig:wrench}.
The perturbed observations are $y=u^h_2+e$ where $e$ is a zero-mean $H^{1}$-normal noise with the signal-to-noise-ratio $10$. Various summary statistics of the estimated parameters are shown in Figure \ref{fig:wrench_result}, where the posterior samples are obtained using subspace MALA. In this example, the posterior distribution is able to significantly reduce the prior uncertainty. 

\begin{figure}
\begin{center}
\begin{tikzpicture}
    \node at (0,0) {\includegraphics[width=0.5\linewidth, trim = {0, 9em, 0, 6em}, clip]{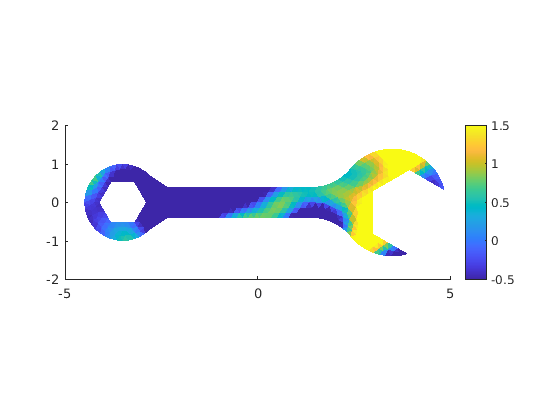}};
    \node at (0,+1) {True parameter $x(s)$};
\end{tikzpicture}
\begin{tikzpicture}
    \node at (0,0.25) {\includegraphics[width=0.35\linewidth,height=0.21\linewidth]{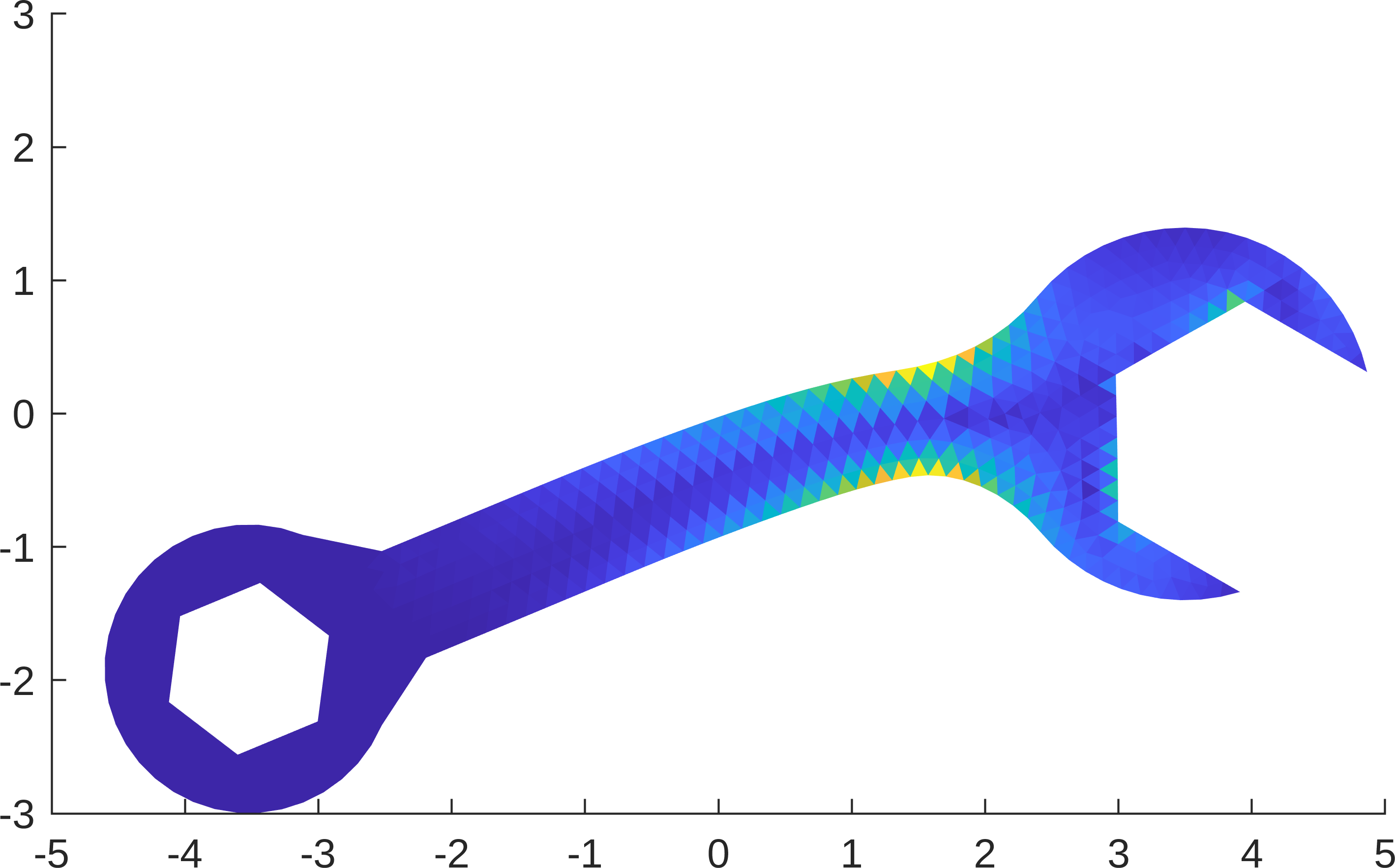}};
    \node at (0,0.25) {\includegraphics[width=0.35\linewidth,height=0.21\linewidth]{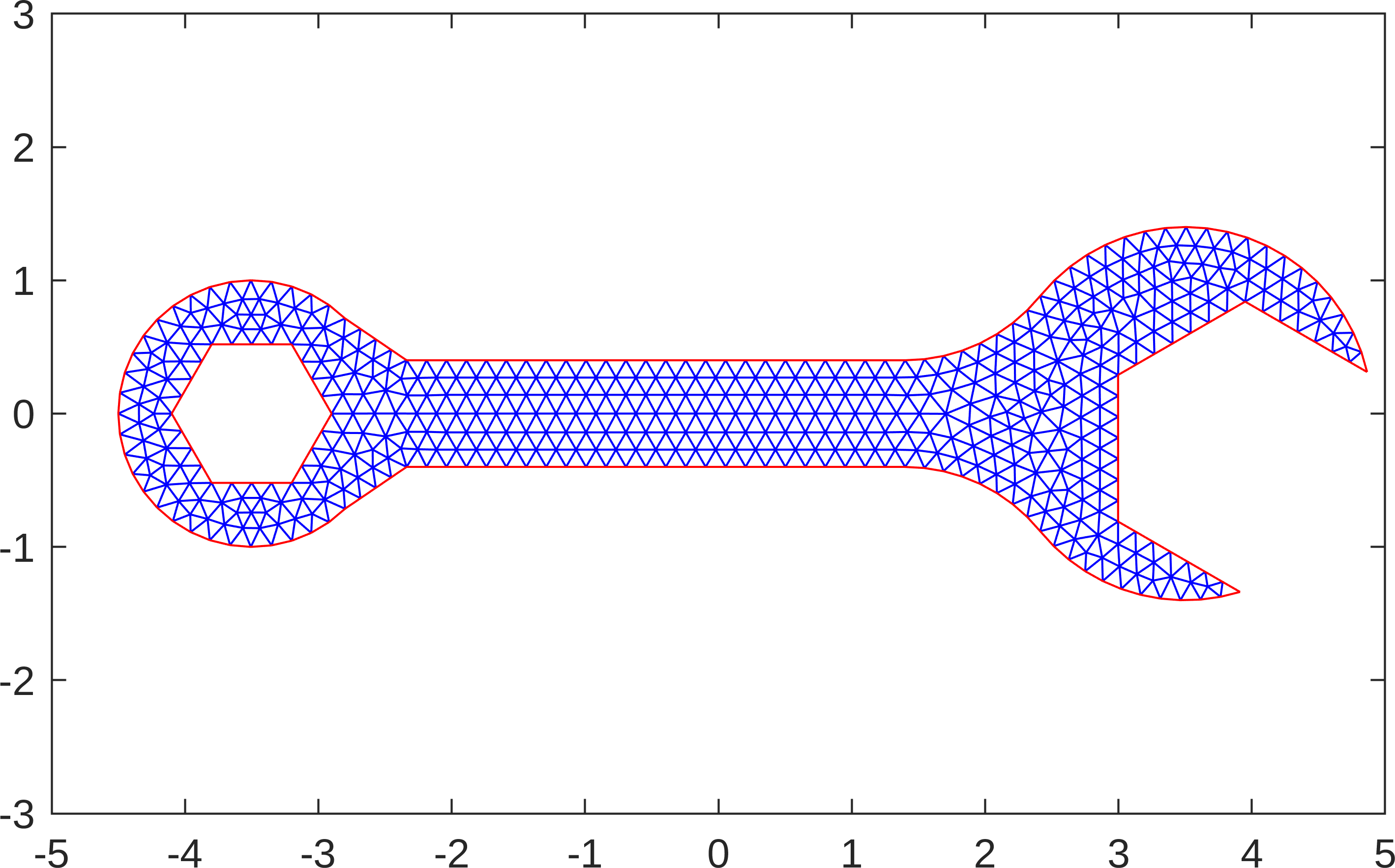}};  
    \node[rotate=-32] at (2.9*0.8,-0.3*0.7) {$u=0$};
    \node[rotate=-32] at (3.2*0.8,0.3*0.7) {$u=0$};
    \draw[->,red,line width=2pt] (-1.18,1.4) -- (-1.18,0.63);
    \draw[->,red,line width=2pt] (0.85,1.4) -- (0.85,0.63);
    \node[red,anchor=west] at (-1.5*0.7,1.3*0.7) {$f {=} [0, -1]^T$};
    \foreach \x in {-2.33824, -2.18871, -2.03918, -1.88965, -1.74012, -1.59059, -1.44106, -1.29153, -1.142, -0.992471, -0.842941, -0.693412, -0.543882, -0.394353, -0.244824, -0.0952941, 0.0542353, 0.203765, 0.353294, 0.502824, 0.652353, 0.801882, 0.951412, 1.10094, 1.25047, 1.4} {
    \node[ellipse,draw=green!50!black,fill=green!50!black,inner sep=0.7pt,anchor=center] at (0.545*\x+0.09,0.55) {};
    }
    \node at (0,-.1) {};
\end{tikzpicture}
\end{center}
\caption{Left: the true log Young’s modulus used for generating observed data. Right: the displacement of the wrench.}\label{fig:wrench}
\end{figure}

\begin{figure}
\begin{center}
    \hspace{-1em}
    \includegraphics[width=0.5\linewidth, trim={0, 9em, 0, 8em}, clip]{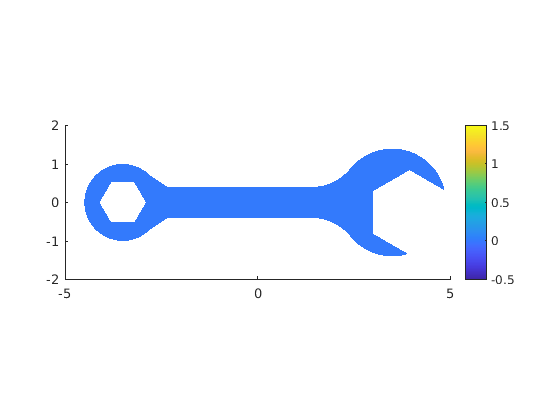}\hspace{-1em}
    \includegraphics[width=0.5\linewidth, trim={0, 9em, 0, 8em}, clip]{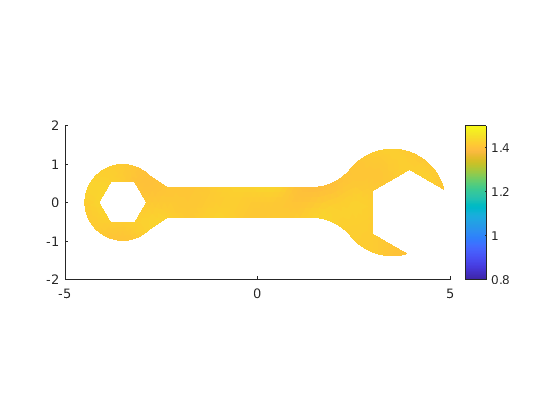}
    \hspace{-2em}
\end{center}

\begin{center}
    \hspace{-1em}
    \includegraphics[width=0.5\linewidth, trim={0, 9em, 0, 8em}, clip]{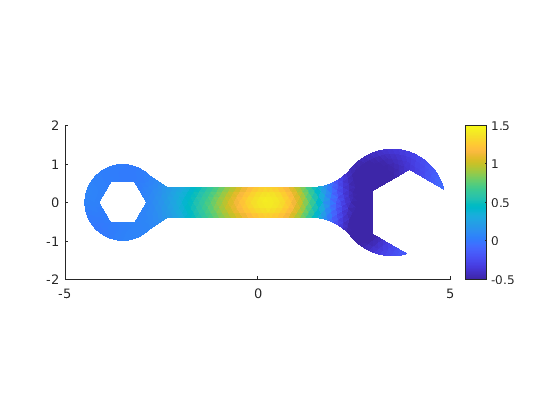}\hspace{-1em}
    \includegraphics[width=0.5\linewidth, trim={0, 9em, 0, 8em}, clip]{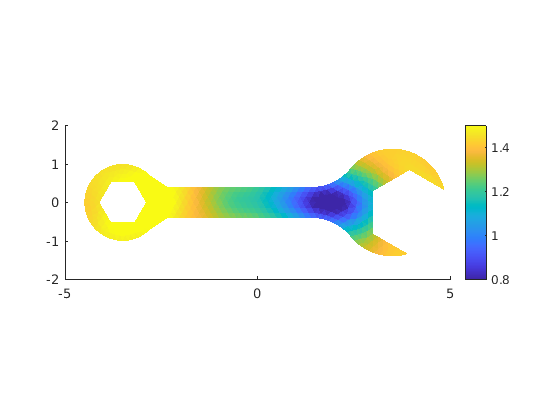}
    \hspace{-2em}
\end{center}
\caption{Summary statistics of the log Young’s modulus. Top left: the prior mean. Top right: the prior standard deviation. Bottom left: the estimated posterior mean. Bottom right: the estimated posterior standard deviation.}\label{fig:wrench_result}
\end{figure}

Because the PDE model used in this example has a rather demanding computational cost, we only demonstrate the performance of Algorithm \ref{alg:pMCMC} using the most efficient MCMC samplers. In this example, NUTS failed to sample the posterior, because the Hamiltonian dynamics push the Markov chain to extreme tails of the posterior, where the numerical discretization of the PDE becomes unstable to solve. 
Recall that the standard full-space pCN and MALA are about three orders of magnitude slower than their subspace counterpart in Example 1 (cf. Table \ref{table:iact_elliptic}). In this example, we will not repeat these trivial accelerations. Instead, we compare H-MALA (which is the most efficient full-space sampling method in Example 1) with the subspace MALA using LIS dimensions $r = \{24, 32, 40\}$ and a pseudo-marginal sample size $m=2$. The IACTs are reported in Table \ref{table:iact_wrench}. Similar to the first example, Algorithm \ref{alg:pMCMC} with LIS is able to significantly accelerate MCMC sampling in this example. 

\begin{table}
    \centering
    \caption{Wrench example. Average IACTs of parameters computed by subspace MALA and H-MALA are reported. All the data reported here are in the form of mean$\pm$standard derivation.}
    \label{table:iact_wrench}
    \begin{tabular}{ll|c|c}
    \hline
    && pseudo-marginal ($m=2$)  &  Hessian-preconditioned \\ \hline
    \multirow{3}{*}{MALA} & $r=24$ & $5.45\!\pm\!0.83$ & \multirow{3}{*}{$67.7\!\pm\!10$} \\
    & $r=32$ & $6.19\!\pm\!0.75$ & \\
    & $r=40$ & $6.58\!\pm\!0.96$ & \\\hline
    \end{tabular}
\end{table}

\section{Conclusion}

In this paper we discussed the design of efficient MCMC algorithms for high-dimensional Bayesian inverse problems with heavy-tailed priors.
Our methodology relies on two building blocks: first, we use \emph{normalizing transformations} in order to reformulate the original inverse problem in a reference space endowed with a Gaussian prior. Then, we project the high dimensional problem onto a suitable \emph{likelihood-informed subspace} (LIS) which is detected using the gradient of the log-likelihood function.
The way we detect the LIS in the transformed (Gaussian) coordinates permits us to control rigorously the error caused by the dimension reduction.
Furthermore, we exploit the LIS by designing efficient MCMC algorithms which better explore the important directions.
While the results from these procedures can be affected by the usage of inaccurate transformations, we can correct the errors through the delayed acceptance approach.
Finally, we demonstrate the effectiveness of these MCMC procedures numerically on an elliptic PDE and a linear elasticity problem.

\section*{Acknowledgements}
TC acknowledges support from the Australian Research Council under the grant DP210103092.
XT is supported by the Singapore Ministry of Education (MOE) grant R-146-000-292-114. 
OZ acknowledges support from the ANR JCJC project MODENA (ANR-21-CE46-0006-01). 

\appendix
\renewcommand{\thesection}{\Alph{section}}
\titleformat{\section}{\bfseries}{\appendixname~\thesection .}{0.5em}{}

\section{Asymptotic behaviour of $T$ for some standard distributions}
\label{sec:Proof_Besov}

In this section we prove that map $T(z)=(\calP^0)^{-1} \circ \Phi^0(z)$ which pushes forward the normal density $\phi^0(z)\propto e^{-z^2/2}$ to $\pi^0(x) \propto e^{-\lambda|x|^p}$ for some $p,\lambda>0$ satisfies
$$
 T(z) \sim \text{sign}(z) \left( \frac{z^2}{2 \lambda} \right)^{1/p}
 \quad\text{and}\quad
 T'(z) \sim \frac{z} { \lambda p} \left( \frac{z^2}{2 \lambda} \right)^{1/p-1} ,
$$
when $z\rightarrow\pm\infty$.
First we give the following lemma.
\begin{lemma}\label{lemma:1}
 Let $p\geq1$ and $\lambda>0$. Then for any $x>0$ we have
\begin{equation}\label{eq:tmp327658}
    \frac{e^{-\lambda x^p}}{\lambda p x^{p-1}}  \left( 1+ \frac{1-p}{\lambda p x^{p}} \right) \leq \int_x^{\infty} e^{-\lambda t^p} \d t \leq \frac{e^{-\lambda x^p}}{\lambda p x^{p-1}} .
\end{equation}
Moreover, for any $1/2\leq p \leq 1$ and $\lambda>0$ and $x>0$, we have 
\begin{equation}\label{eq:tmp327651}
    \frac{e^{-\lambda x^p}}{\lambda p x^{p-1}} \left( 1+ \frac{1-p}{\lambda p x^{p}} + \frac{(1-p)(1-2p)}{(\lambda p x^{p})^2 } \right) 
 \leq \int_x^{\infty} e^{-\lambda t^p} \d t 
 \leq \frac{e^{-\lambda x^p}}{\lambda p x^{p-1}} \left( 1+ \frac{1-p}{\lambda p x^{p}} \right) .
\end{equation}
More generally, for any $n\in\mathbb{N}$ and any $\frac{1}{n+1} \leq p \leq \frac{1}{n}$ we have
\begin{equation}\label{eq:tmp327652}
 \frac{e^{-\lambda x^p}}{\lambda p x^{p-1}} \sum_{i=0}^{n+1} \prod_{k=1}^i \frac{ 1-kp}{\lambda p x^{p}} \leq \int_x^{\infty} e^{-\lambda t^p} \d t 
 \leq \frac{e^{-\lambda x^p}}{\lambda p x^{p-1}} \sum_{i=0}^n \prod_{k=1}^i \frac{ 1-kp}{\lambda p x^{p}} .
\end{equation}
\end{lemma}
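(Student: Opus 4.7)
The plan is to reduce the lemma to a single integration-by-parts recursion together with a sign-of-product observation; the three statements are then just different instances of the same inequality, and only (\ref{eq:tmp327652}) needs a detailed argument.

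First I would introduce, for each integer $k\ge 0$, the auxiliary integrals
\[
 I_k(x) = \int_x^{\infty} \frac{e^{-\lambda t^p}}{t^{kp}} \, \d t ,
\]
so that the quantity of interest is $I_0(x)$. Differentiating $\frac{e^{-\lambda t^p}}{t^{(k+1)p-1}}$ and integrating the identity from $x$ to $\infty$ (the boundary term at $\infty$ vanishes because $e^{-\lambda t^p}$ decays faster than any polynomial) yields the one-step recursion
\[
 I_k(x) = \frac{e^{-\lambda x^p}}{\lambda p \, x^{(k+1)p-1}} + \frac{1-(k+1)p}{\lambda p}\, I_{k+1}(x) .
\]
Iterating this identity $n+1$ times and telescoping the constants produces the exact formula
\[
 I_0(x) = \frac{e^{-\lambda x^p}}{\lambda p \, x^{p-1}} \sum_{i=0}^{n} \prod_{k=1}^{i}\frac{1-kp}{\lambda p\, x^{p}} + R_n(x),
 \qquad R_n(x) = \Bigl(\prod_{k=1}^{n+1}\frac{1-kp}{\lambda p}\Bigr) I_{n+1}(x),
\]
which is the core identity from which every bound in the lemma follows.

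The heart of the argument is then the sign analysis of the remainder $R_n(x)$. Since $I_{n+1}(x)>0$ and $\lambda p>0$, the sign of $R_n(x)$ is the sign of $\prod_{k=1}^{n+1}(1-kp)$. For $\tfrac{1}{n+1}\le p\le \tfrac{1}{n}$, exactly one factor of this product (the one with $k=n+1$) is $\le 0$ while the other $n$ factors are $\ge 0$, so $R_n(x)\le 0$; applying the same recursion one step further and inspecting $\prod_{k=1}^{n+2}(1-kp)$, one finds two non-positive factors and hence $R_{n+1}(x)\ge 0$. Substituting these two sign facts into the exact formula gives precisely the upper bound with $n$ terms and the lower bound with $n+1$ terms asserted in \eqref{eq:tmp327652}. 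The bounds \eqref{eq:tmp327658} and \eqref{eq:tmp327651} are the particular cases $n=0$ (so that $p\ge 1$) and $n=1$ (so that $\tfrac12\le p\le 1$), verified in passing by checking that the product formulas collapse to $1$, to $1+\tfrac{1-p}{\lambda p x^p}$, and to $1+\tfrac{1-p}{\lambda p x^p}+\tfrac{(1-p)(1-2p)}{(\lambda p x^p)^2}$.

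I do not anticipate a genuine obstacle: the integration-by-parts step is a direct computation, and the boundary term at infinity is trivial because $I_{k+1}(x)$ is finite for every $x>0$ (the integrand is positive, continuous, and decays super-exponentially). The only mildly delicate point is book-keeping the telescoping product correctly and confirming that the two boundary cases $p=1/n$ and $p=1/(n+1)$ cause no trouble; at these values one of the factors in the product vanishes, so the corresponding remainder is identically zero and the upper and lower bounds coincide with the partial sum, which is consistent with the statement.
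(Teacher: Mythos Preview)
Your proof is correct and uses the same integration-by-parts recursion as the paper. The paper works out the cases $p\ge 1$ and $\tfrac12\le p\le 1$ explicitly and then appeals to recursion for the general statement, whereas you set up the recursion once via the auxiliary integrals $I_k$ and obtain the exact identity with remainder, which is a cleaner organization. The one substantive difference is in how the lower bound is closed: the paper bounds the tail integral directly by pulling out the decreasing factor $t^{-(n+2)p+1}$ (this is where the hypothesis $(n+2)p-1\ge 0$, i.e.\ $p\ge \tfrac{1}{n+2}$, enters), while you instead iterate the recursion one more step and read off the sign of $\prod_{k=1}^{n+2}(1-kp)$. Both devices yield the same bound; yours has the advantage of treating upper and lower bounds symmetrically.
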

\begin{proof}
One integration by part yields
\begin{align}
\notag
    \int_x^{\infty} e^{-\lambda t^p} \d t 
    &= \int_x^{\infty} \frac{ t^{p-1} e^{-\lambda t^p}}{ t^{p-1}} \d t   \nonumber\\
    \notag
    &= \left[ \frac{e^{-\lambda t^p}}{-\lambda p t^{p-1}} \right]_x^\infty - \int_x^{\infty} \frac{ (1-p) e^{-\lambda t^p}}{ -\lambda p t^p} \d t \nonumber\\
    \label{tmp:1}
    &= \frac{e^{-\lambda x^p}}{\lambda p x^{p-1}}  + \frac{(1-p)}{\lambda p}\int_x^{\infty} \frac{  e^{-\lambda t^p}}{  t^p} \d t  . 
\end{align}
Thus, if $p\geq 1$ then $\int_x^{\infty} e^{-\lambda t^p} \d t \leq \frac{e^{-\lambda x^p}}{\lambda p x^{p-1}}$ which gives the right-hand side of \eqref{eq:tmp327658}. On the other hand, $p\geq1$ ensures $2p-1 \geq0$ so that
$$
 \int_x^{\infty} \frac{  e^{-\lambda t^p}}{  t^p} \d t = \int_x^{\infty} \frac{  t^{p-1}e^{-\lambda t^p}}{  t^{2p-1}} \d t 
 \leq \frac{1}{  x^{2p-1}}\int_x^{\infty} t^{p-1}e^{-\lambda t^p} \d t
 = \frac{1}{ x^{2p-1}} \frac{e^{-\lambda x^p}}{\lambda p} ,
$$
and then $ \int_x^{\infty} e^{-\lambda t^p} \d t \geq \frac{e^{-\lambda x^p}}{\lambda p x^{p-1}}  \left( 1+ \frac{1-p}{\lambda p x^{p}} \right) $, which is the left-hand side of \eqref{eq:tmp327658}. This shows that \eqref{eq:tmp327658} holds for any $p\geq 1$.

Now let $1/2\leq p\leq 1$. To show \eqref{eq:tmp327651}, we apply one more integration by part to \eqref{tmp:1} and find
\begin{align*}
   \int_x^{\infty} \frac{  e^{-\lambda t^p}}{  t^p} \d t 
   &= \int_x^{\infty} \frac{  t^{p-1}e^{-\lambda t^p}}{  t^{2p-1}} \d t  \\
   &=\left[ \frac{e^{-\lambda t^p}}{-\lambda p t^{2p-1}} \right]_x^\infty - \int_x^{\infty} \frac{ (1-2p) e^{-\lambda t^p}}{ -\lambda p t^{2p}} \d t \\
   &= \frac{e^{-\lambda x^p}}{\lambda p x^{2p-1}} + (1-2p) \int_x^{\infty} \frac{ e^{-\lambda t^p}}{ \lambda p t^{2p}} \d t ,
\end{align*}
so that
$$
 \int_x^{\infty} e^{-\lambda t^p} \d t  
 = \frac{e^{-\lambda x^p}}{\lambda p x^{p-1}}  + \frac{(1-p)}{(\lambda p)^2}
 \frac{e^{-\lambda x^p}}{ x^{2p-1}} + \frac{(1-p)(1-2p)}{(\lambda p)^2} \int_x^{\infty} \frac{ e^{-\lambda t^p}}{ t^{2p}} \d t .
$$
Because $1/2\leq p\leq 1$ we have $1-p\geq0$ and $(1-p)(1-2p) \leq 0$ so that $ \int_x^{\infty} e^{-\lambda t^p} \d t \leq \frac{e^{-\lambda x^p}}{\lambda p x^{p-1}}  \left( 1+ \frac{1-p}{\lambda p x^{p}} \right) $. This gives the right-hand side of \eqref{eq:tmp327651}.
Furthermore, $1/2\leq p\leq 1$ ensures $3p-1\geq0$ so that
$$
 \int_x^{\infty} \frac{ e^{-\lambda t^p}}{ t^{2p}} \d t 
 = \int_x^{\infty} \frac{ t^{p-1}e^{-\lambda t^p}}{ t^{3p-1}} \d t
 \leq \frac{1}{x^{3p-1}} \int_x^{\infty}  t^{p-1}e^{-\lambda t^p} \d t
 = \frac{1}{x^{3p-1} } \frac{ e^{-\lambda x^p} }{\lambda p}.
$$
so that $ \int_x^{\infty} e^{-\lambda t^p} \d t \geq \frac{e^{-\lambda x^p}}{\lambda p x^{p-1}}  \left( 1+ \frac{1-p}{\lambda p x^{p}} + \frac{(1-p)(1-2p)}{(\lambda p)^2x^{2p}} \right)$.
This shows that \eqref{eq:tmp327651} holds for $1/2\leq p\leq 1$.

By recursion, we have that for any $n\geq1$ the inequality \eqref{eq:tmp327652} holds for any $\frac{1}{n+1}\leq p \leq \frac1n$.

\end{proof}

\begin{lemma}
\label{lem:Besov}
Suppose $\pi^0(x)= \mathcal{Z}_{p,\lambda}^{-1} \exp(-\lambda |x|^p)$ with $p,\lambda>0$ and $\mathcal{Z}_{p,\lambda}=\int\exp(-\lambda |x|^p)\d x$. Then 
\[
T(z) \sim \left( \frac{z^2}{2 \lambda} \right)^{1/p},\qquad T'(z) \sim  \frac{z} { \lambda p} \left( \frac{z^2}{2 \lambda} \right)^{1/p-1}  ,
\]
when $z\to \infty$. 
\end{lemma}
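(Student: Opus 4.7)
The plan is to start from the defining identity $\mathcal{P}^0(T(z))=\Phi^0(z)$, equivalently $1-\mathcal{P}^0(T(z))=\Phi^0(-z)$ for $z>0$, and read the asymptotics of $T$ off the tail expansions of the two CDFs. Since both tails decay like stretched exponentials, it is enough to match leading exponents; polynomial prefactors will only affect subleading terms that do not show up in the $\sim$-asymptotic. I would treat only $z\to+\infty$; the case $z\to-\infty$ is symmetric because both $\pi^0$ and $\phi^0$ are even.

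First I would establish the tail expansions. For the Gaussian, the standard integration-by-parts bound gives
\[
\Phi^0(-z)=\int_z^\infty \tfrac{1}{\sqrt{2\pi}}e^{-t^2/2}\d t \sim \frac{e^{-z^2/2}}{z\sqrt{2\pi}},\qquad z\to+\infty.
\]
For $\pi^0$, Lemma \ref{lemma:1} (in its most general form \eqref{eq:tmp327652}, or just the one-step estimate \eqref{eq:tmp327658} applied inductively for $0<p<1$) yields
\[
1-\mathcal{P}^0(x) = \mathcal{Z}_{p,\lambda}^{-1}\int_x^\infty e^{-\lambda t^p}\d t \sim \frac{e^{-\lambda x^p}}{\mathcal{Z}_{p,\lambda}\,\lambda p\, x^{p-1}},\qquad x\to+\infty,
\]
for every $p,\lambda>0$.

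With $x=T(z)$, matching these two asymptotics gives
\[
\frac{e^{-\lambda T(z)^p}}{\mathcal{Z}_{p,\lambda}\,\lambda p\, T(z)^{p-1}}\sim \frac{e^{-z^2/2}}{z\sqrt{2\pi}},
\]
hence, after taking logarithms,
\[
\lambda T(z)^p = \tfrac12 z^2 + (p-1)\log T(z) + O(\log z).
\]
Since $T(z)\to\infty$, a bootstrap argument (insert the crude bound $T(z)\le C z^{2/p}$ into the log terms on the right-hand side) shows that the logarithmic correction is negligible next to $\tfrac12 z^2$, so $\lambda T(z)^p \sim \tfrac12 z^2$, i.e.\ $T(z)\sim (z^2/(2\lambda))^{1/p}$, which is the first claim.

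For the derivative, I would use the pushforward identity $T'(z)=\phi^0(z)/\pi^0(T(z))$, equivalently
\[
T'(z) = \frac{\mathcal{Z}_{p,\lambda}}{\sqrt{2\pi}}\,\exp\!\bigl(\lambda T(z)^p-\tfrac12 z^2\bigr).
\]
The exponent is not driven to zero but is controlled by the CDF matching above, which gives
\[
\exp\!\bigl(\lambda T(z)^p-\tfrac12 z^2\bigr)\sim \frac{z\sqrt{2\pi}}{\mathcal{Z}_{p,\lambda}\,\lambda p\, T(z)^{p-1}}.
\]
Plugging this back yields $T'(z)\sim z/(\lambda p\,T(z)^{p-1})$, and substituting the leading order $T(z)\sim(z^2/(2\lambda))^{1/p}$ into $T(z)^{p-1}$ gives the stated asymptotic for $T'(z)$.

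The only genuine subtlety is verifying that the logarithmic-level correction in $\lambda T(z)^p$ is indeed absorbed into the $\sim$ of $T(z)$, which I expect to be the main place to be careful: one needs that $T(z)^p = z^2/(2\lambda) + O(\log z)$ implies $T(z)=(z^2/(2\lambda))^{1/p}(1+o(1))$, which is immediate by factoring and expanding $(1+o(1))^{1/p}$. Every other step is a routine application of Lemma \ref{lemma:1} and the Mills-ratio estimate for $\Phi^0$.
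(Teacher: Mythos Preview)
Your proposal is correct and follows essentially the same route as the paper: both use Lemma~\ref{lemma:1} for the tail of $1-\mathcal{P}^0$ and the Mills-ratio bound for $1-\Phi^0$, match them through $1-\mathcal{P}^0(T(z))=1-\Phi^0(z)$, and then recover $T'$ from $T'(z)=\phi^0(z)/\pi^0(T(z))$. The only cosmetic difference is that the paper avoids taking logarithms by sandwiching $1-\mathcal{P}^0(x)$ between $e^{-\lambda(1\pm\varepsilon)x^p}$ before inverting, whereas you pass to logs and bootstrap; both give the same conclusion (note a harmless sign slip: the log term should read $(1-p)\log T(z)$, which is still $O(\log z)$ after your bootstrap).
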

\begin{proof}
For any $x>0$, we have $ \calP^0(x) = 1-\mathcal{Z}_{p,\lambda}^{-1}\int_{x}^{\infty} e^{-\lambda x^p}\d x$ so that Lemma \ref{lemma:1} ensures
\begin{equation}\label{eq:tmp235098731}
 1- e^{-\lambda x^p}F_n(x)
 \leq \calP^0(x) \leq 
 1- e^{-\lambda x^p}F_{n+1}(x) .
\end{equation}
Here, $n$ is the integer such that $\frac{1}{n+1}\leq p \leq \frac1n$ and $F_n(x)$ is given by
$$
F_n(x) = \frac{1}{\mathcal{Z}_{p,\lambda}\lambda p x^{p-1}} \sum_{i=0}^{n} \prod_{k=1}^i \frac{ 1-kp}{\lambda p x^p} .
$$
Because $F_n(x)$ is a rational function in $x$, it is dominated by the exponential function. Thus, we have that for any $\varepsilon>0$, there exists $A\geq0$ such that $F_n(x) \leq e^{\lambda \varepsilon x^p}$ and $F_{n+1}(x) \geq e^{-\lambda \varepsilon x^p}$ hold for any $x\geq A$.
We deduce that
$$
 1-e^{-\lambda(1-\varepsilon) x^p}
 \leq \calP^0(x) \leq 
 1-e^{-\lambda(1+\varepsilon) x^p} ,
$$
holds for any $x\geq A$. Replacing $x$ with $T(z)=(\calP^0)^{-1}\circ \Phi^0(z)$ we obtain
$$
 -\frac{\log (1-\Phi^0(z))}{\lambda (1+\varepsilon)}
 \leq T(z)^p \leq
 -\frac{\log (1-\Phi^0(z))}{\lambda (1-\varepsilon)} ,
$$
for any $z\geq T^{-1}(A)$.
Because $1-\Phi^0(z) = \frac{1}{\sqrt{2\pi}}\int_{z}^\infty e^{-t^2/2}\d t $, Lemma \ref{lemma:1} with $p=2$ and $\lambda=1/2$ ensures $ \frac{e^{-z^2/2}}{z\sqrt{2\pi}} (1-1/z^2)\leq 1-\Phi^0(z) \leq \frac{e^{-z^2/2}}{z\sqrt{2\pi}} $. Thus, 
\begin{equation*}
 \frac{ z^2 }{2\lambda (1+\varepsilon)}
 +\frac{\log (z\sqrt{2\pi})}{\lambda (1+\varepsilon)}
 \leq T(z)^p \leq
 \frac{ z^2 }{2\lambda (1-\varepsilon)}
 +\frac{\log(z\sqrt{2\pi}/(1-1/z^2))}{\lambda (1-\varepsilon)} ,
\end{equation*}
for any $z\geq T^{-1}(A)$. We deduce that 
$ T(z) \sim ( \frac{z^2}{2 \lambda} )^{1/p} $ when $z\rightarrow\infty$.
Because $T(-z)= -T(z)$, we obtain $ T(z) \sim -( \frac{z^2}{2 \lambda} )^{1/p} $ when $z\rightarrow-\infty$.

We now analyze $T'(z)$. Assume $x>0$. 
By letting $x=T(z)$ in \eqref{eq:tmp235098731} we obtain
$$
 \frac{T(z)^{p-1} \lambda p( 1- \Phi^0(x)) }{\sum_{i=0}^{n} \prod_{k=1}^i \frac{ 1-kp}{\lambda p T(z)^p} } 
 \leq  \frac{e^{-\lambda T(z)^p}}{\mathcal{Z}_{p,\lambda}}  \leq 
 \frac{T(z)^{p-1} \lambda p( 1- \Phi^0(x)) }{ \sum_{i=0}^{n+1} \prod_{k=1}^i \frac{ 1-kp}{\lambda p T(z)^p}} .
$$
Since $T'(z)=\frac{\phi^0(z)}{\pi^0(T(z))}$, we can write
$$
 \frac{\phi^0(z)\sum_{i=0}^{n+1} \prod_{k=1}^i \frac{ 1-kp}{\lambda p T(z)^p} } {T(z)^{p-1} \lambda p( 1- \Phi^0(x)) }
 \leq  T'(z) \leq 
 \frac{\phi^0(z) \sum_{i=0}^{n} \prod_{k=1}^i \frac{ 1-kp}{\lambda p T(z)^p}}{T(z)^{p-1} \lambda p( 1- \Phi^0(x)) } .
$$
so that, using the fact that $\frac{\phi^0(z)}{z}(1-1/z^2)\leq 1-\Phi^0(z)\leq \frac{\phi^0(z)}{z}$, we get
$$
 \frac{z}{T(z)^{p-1} \lambda p } \underbrace{ \sum_{i=0}^{n+1} \prod_{k=1}^i \frac{ 1-kp}{\lambda p T(z)^p} }_{\rightarrow 1}
 \leq  T'(z) \leq 
 \frac{z}{T(z)^{p-1} \lambda p }\underbrace{\frac{\sum_{i=0}^{n} \prod_{k=1}^i \frac{ 1-kp}{\lambda p T(z)^p}} {1-1/z^2}}_{\rightarrow 1}.
$$
We deduce that
$$
 T'(z) \sim \frac{z}{T(z)^{p-1} \lambda p} \sim \frac{z} { \lambda p} \left( \frac{z^2}{2 \lambda} \right)^{1/p-1} 
$$
when $z\rightarrow\infty$. Because $T'(-z)=T'(z)$, we deduce that the above equivalent also holds when $z\rightarrow-\infty$.

\end{proof}

\section*{Acknowledgments}

\bibliographystyle{plain}
\bibliography{ref}

\end{document}